%%%%%%%%%%%%%%%%%%%%%%%%%%%%%%
%
% Typesetting
%
\documentclass[11pt,oneside]{preprintCL}
\usepackage{comment}
\usepackage{hyperref}
\usepackage{breakurl}
\usepackage{times}
\usepackage{microtype}
\usepackage{booktabs}
\usepackage{tikz}
\usepackage{extarrows}
\usepackage{mathtools} % for \xRightarrow

%
% Math utility functions
%
\usepackage{amsmath}
\makeatletter
\let\over\@@over
\let\atop\@@atop
\makeatother
\usepackage{amssymb}
%
% Nice curly symbols
%
\usepackage{mathrsfs}
%
% Homegrown stuff
%
\usepackage{mhequ}
\usepackage{mhenvs}
\usepackage{mhsymb}

%
% Fancy list labels
%
\usepackage{enumitem}
%%%%%%%%%%%%%%%%%%%%%%%%%%%%%%

\definecolor{darkgreen}{rgb}{0.1,0.7,0.1}
\addtolength{\marginparwidth}{2.3em}
%\makeatletter
%\renewcommand*{\@evenfoot}{\@oddfoot}
%\makeatother

\hypersetup{
citecolor=blue,
colorlinks=true, %colorise les liens
urlcolor= blue, %couleur des hyperliens
linkcolor= black, %couleur des liens internes
bookmarksopen=true, %si les signets Acrobat sont crees,
pdftitle={Asymmetric bridges and KPZ}, %informations apparaissant dans
}

\setlength{\oddsidemargin}{33pt}
%\setlength{\evensidemargin}{0in}
%\addtolength{\hoffset}{-1cm}
\setlength{\textwidth}{13.6 cm}
\setlength{\textheight}{9in}
%\evensidemargin\oddsidemargin
%\setlength{\headheight}{14pt}

\newtheorem{assumption}[lemma]{Assumption}

\DeclareMathOperator*{\esslim}{ess lim}

\newcommand{\argsh}{\mbox{argsh\,}}

\def\E{\mathbb{E}}
\def\P{\mathbb{P}}

\def\tun{\mathbf{1}}

\newcommand{\bbD}{\mathbb{D}}

\newcommand{\bbP}{\mathbb{P}}

\newcommand{\cC}{\mathcal{C}}

\newcommand{\cF}{\mathcal{F}}

\newcommand{\cI}{\mathcal{I}}
\newcommand{\cJ}{\mathcal{J}}

\newcommand{\cL}{\mathcal{L}}
\newcommand{\cM}{\mathcal{M}}
\newcommand{\cN}{\mathcal{N}}
\newcommand{\cO}{\mathcal{O}}

\newcommand{\ccM}{\mathscr{M}}

\begin{document}

\title{Weakly asymmetric bridges\\
	and the KPZ equation}
\author{Cyril Labb\'e}
\institute{Universit\'e Paris Dauphine\footnote{PSL Research University, CNRS, UMR 7534, CEREMADE, 75016 Paris, France.\\ Part of this work was carried out while the author was a Research Fellow at the University of Warwick.}}

\maketitle

\begin{abstract}
	We consider the corner growth dynamics on discrete bridges from $(0,0)$ to $(2N,0)$, or equivalently, the weakly asymmetric simple exclusion process with $N$ particles on $2N$ sites. We take an asymmetry of order $N^{-\alpha}$ with $\alpha \in (0,1)$ and provide a complete description of the asymptotic behaviour of this model. In particular, we show that the hydrodynamic limit of the density of particles is given by the inviscid Burgers equation with \textit{zero-flux} boundary condition. When the interface starts from the flat initial profile, we show that KPZ fluctuations occur whenever $\alpha \in (0,1/3]$. In the particular regime $\alpha =1/3$, these KPZ fluctuations suddenly vanish at a deterministic time.
\end{abstract}

\keywords{exclusion process; height function; discrete bridge; asymmetry; stochastic heat equation; Burgers equation; Kardar-Parisi-Zhang equation.}

\subjclass{Primary 60K35; Secondary 60H15; 82C24}

\tableofcontents

\section{Introduction}

Consider the discrete set $\cC_N$ of bridges from $(0,0)$ to $(2N,0)$ which have slope $\pm 1$ on each interval $[k,k+1)$. We are interested in the scaling behaviour of the $\cC_N$-valued continuous-time Markov chain obtained by running the so-called \textit{corner growth dynamics}: at rate $p_N$ (resp.~$1-p_N$) every downwards corner (resp.~upwards corner) flips into its opposite, see Figure \ref{Fig1}. The generator of this chain can be written as follows:
\begin{equ}
\cL_N f(S) = \sum_{k=1}^{2N-1} \Big(p_N\tun_{\{\Delta S(k) = +2\}} + (1-p_N) \tun_{\{\Delta S(k) = -2\}}\Big) \big( f(S^k) - f(S)\big)\;,
\end{equ}
where $\Delta$ denotes the discrete Laplacian $\Delta S(k) = S(k+1) - 2 S(k) + S(k-1)$ and $S^k$ is the bridge obtained from $S$ by flipping the corner at site $k$ (if any):
\begin{equ}
S^k(\ell) = \begin{cases} S(\ell) & \ell \ne k\;,\\
S(k) + \Delta S(k)& \ell = k\;.
\end{cases}
\end{equ}

If we set $\eta(t,k)=(S(t,k)-S(t,k-1)+1)/2 \in \{0,1\}$, then $\eta$ is the asymmetric simple exclusion process with $N$ particles on $\{1,\ldots,2N\}$ and $S$ is its evolving height function. Notice that we impose a ``zero-flux" boundary condition at the level of the exclusion process: a particle at site $1$, resp.~$2N$, is not allowed to jump to its left, resp.~right. While many results have been established on this process (and its related versions on the infinite lattice $\Z$, on the torus, with reservoirs, etc.), the regime of weak asymmetry where
\begin{equ}\label{Eq:pN}
p_N \sim \frac12 + \frac1{(2N)^\alpha} + \cO\Big(\frac1{N^{2\alpha}}\Big)\;,\quad \alpha\in (0,1)\;,
\end{equ}
seems to have never been investigated before. This choice of asymmetry, combined with our boundary conditions, gives rise to scaling behaviours which, to the best of our knowledge, have not been observed previously in the literature, see below. A complete description of the full range $\alpha \in (0,\infty)$ of scaling limits of this model is presented in a companion paper~\cite{LabbeReview}. In the present paper, we will \textit{always} assume $\alpha \in (0,1)$.

\begin{figure}
\begin{minipage}{0.5\linewidth}
\centering\label{Fig1}
	\begin{tikzpicture}[scale=1.3]
	
	%%%%%%%%%%%%%%%%%%%%%%%%
	%%% Discrete Bridge  %%%
	%%%%%%%%%%%%%%%%%%%%%%%%
	% x-y axis
	\draw[-,thin,color=gray] (0,0)node[below]{\tiny$0$} -> (3,0) node[below]{\tiny$2N$};
	%\draw[-,thin, color=black] (0,-3) -- (0,3);
	
	% Limiting shape
	\draw[-,gray] (0,0) -- (1.5,1.5) -- (3,0);
	\draw[-,gray] (0,0) -- (1.5,-1.5) -- (3,0);
	
	\draw[-,thick,color=black] (0,0) -- (0.25,0.25) -- (0.5,0) -- (0.75,-0.25) -- (1,-0.5) -- (1.25,-0.25) -- (1.5,0) -- (1.75,0.25) -- (2,0) -- (2.25,0.25) -- (2.5,0.5) -- (2.75,0.25) -- (3,0);
	
	% Upwards jump
	\draw[-,style=dotted] (0.75,-0.25) -- (1,0)node[above] {\tiny \mbox{rate} $p_N$} -- (1.25,-0.25);
	\draw[->] (1,-0.4) -- (1,-0.1);
	
	% Downwards jump
	\draw[-,style=dotted] (1.5,0) -- (1.75,-0.25)node[below] {\tiny \mbox{rate} $1-p_N$}  -- (2,0);
	\draw[->] (1.75,0.15) -- (1.75,-0.15);
	
	%\draw[-,thin,color=black] (301.5,0) -- (301.5,-0.05) node[below] {$r$} -- (301.5,0);
	%\draw[-,thin,color=black] (301.5,0) node[below] {$r$};
	
	%\draw[-,thin,color=black] (502,0) -- (502,-0.05) node[below] {$1$} -- (502,0);
	%\draw[-,thin,color=black] (502,0) node[below] {$1$};
	\end{tikzpicture}
	\caption{An example of interface.}
	
\end{minipage}
\begin{minipage}{0.5\linewidth}
	
	\begin{tikzpicture}[scale=1.3]
\draw[-,gray] (0,0) node[below]{\tiny$0$} -- (3,0) node[below]{\tiny$2N$};
\draw[-,gray] (0,0) -- (1.5,1.5) -- (3,0);
\draw[-,gray] (0,0) -- (1.5,-1.5) -- (3,0);

\draw[-,thick] (0,0) -- (0.05,0.05) -- (0.1,0.1) -- (0.15,0.15) -- (0.2,0.2) -- (0.25,0.25) -- (0.3,0.3) -- (0.35,0.35) -- (0.4,0.4) -- (0.45,0.45) -- (0.5,0.5) -- (0.55,0.55) -- (0.6,0.6) -- (0.65,0.65) -- (0.7,0.7) -- (0.75,0.75) -- (0.8,0.8) -- (0.85,0.85) -- (0.9,0.9) -- (0.95,0.95) -- (1,1) -- (1.05,1.05) -- (1.1,1.1) -- (1.15,1.15) -- (1.2,1.1) -- (1.25,1.15) -- (1.3,1.2) -- (1.35,1.25) -- (1.4,1.3) -- (1.45,1.35) -- (1.5,1.3) -- (1.55,1.35) -- (1.6,1.3) -- (1.65,1.25) -- (1.7,1.2) -- (1.75,1.25) -- (1.8,1.2) -- (1.85,1.15) -- (1.9,1.1) -- (1.95,1.05) -- (2,1) -- (2.05,0.95) -- (2.1,0.9) -- (2.15,0.85) -- (2.2,0.8) -- (2.25,0.75) -- (2.3,0.7) -- (2.35,0.65) -- (2.4,0.6) -- (2.45,0.55) -- (2.5,0.5) -- (2.55,0.45) -- (2.6, 0.4) -- (2.65,0.35) -- (2.7,0.3) -- (2.75,0.25) -- (2.8,0.2) -- (2.85,0.15) -- (2.9,0.1) -- (2.95,0.05) -- (3,0);

\draw[-,red] (0,0)--(1.2,1.2);
\draw[-,red][domain=1.2:1.8] plot(\x, {(\x-1.2)*(1.8-\x) + 1.2});
\draw[-,red] (1.8,1.2)--(3,0);

\draw[-,gray] (1.5,-0.025) -- (1.5,0.025);
\draw[-,gray] (1.5,0) node[below] {\tiny$N$};
\draw[<->,gray] (1.15,-0.4) -- (1.5,-0.4) node[below]{$\cO(N^{\alpha})$} -- (1.85,-0.4);

\draw[<->,red] (1.5,0.03) -- (1.5,0.3) node[right]{\tiny$N - \cO(N^\alpha)$} -- (1.5,1.29);

\draw[<->,black] (2.1,1.2) -- (2.1,1.28) node[right]{\tiny$\cO(N^{\frac{\alpha}{2}})$} -- (2.1,1.36);

\draw[-,dashed,gray] (1,1.05) -- (2,1.05) -- (2,1.6) -- (1,1.6) -- (1,1.05);

\end{tikzpicture}
\caption{Scaling limit under $\mu_N$ for $\alpha\in (0,1)$. The red curve stands for $\Sigma_\alpha^N$.}
\end{minipage}
\end{figure}

For convenience, we parametrise our asymmetry by setting
\begin{equ}
\frac{p_N}{1-p_N} = e^{\frac{4}{(2N)^\alpha}}\;,\qquad \alpha \in (0,1)\;.
\end{equ}
Plainly, this yields (\ref{Eq:pN}). Our dynamics admits a unique reversible probability measure:
\begin{equ}\label{Eq:muN}
\mu_N(S) = \frac1{Z_N} \Big(\frac{p_N}{1-p_N}\Big)^{\frac{1}{2}A(S)}\;,
\end{equ}
where $A(S) = \sum_{k=1}^{2N} S(k)$ is the area under the discrete bridge $S$, and $Z_N$ is a normalisation constant. Notice that the dynamics is reversible w.r.t.~$\mu_N$ even if the jump rates are asymmetric: this feature of the model is a consequence of the zero-flux boundary condition.

\subsection{Invariant measure and equilibrium fluctuations}

Let us start with a description of the scaling limit of the invariant measure $\mu_N$. As opposed to the regime $\alpha \geq 1$, in order to see a non-trivial asymptotic behaviour when $\alpha \in (0,1)$, one needs to zoom in a window of order $N^\alpha$ around the center of the lattice. Indeed, the interface is essentially stuck to the maximal path $x\mapsto x \wedge (N-x)$ except in this window. To write a precise statement, we need to introduce the curve $\Sigma^N_\alpha$ around which the fluctuations occur under $\mu_N$. Let $L(x)=\log\cosh x$. For all $k\in\{0,\ldots,2N\}$, we set $x_k=(k-N)/(2N)^\alpha$ as well as
\begin{equ}\label{Eq:DefSigmaN}
\Sigma^N_\alpha(x_k) = \sum_{i=1}^{k} L'(h^N_i)\;,\quad h^N_i = \frac{2}{(2N)^\alpha}\Big(N-i+\frac{1}{2}\Big)\;,\quad i\in\{1,\ldots,2N\}\;.
\end{equ}
In between these discrete values $x_k$'s, $\Sigma^N_\alpha$ is defined by linear interpolation. Then, we set
\begin{equ}
u^N(x) := \frac{S(N + x(2N)^\alpha)-\Sigma_\alpha^N(x)}{(2N)^{\frac{\alpha}{2}}} \;,\quad \forall x\in I^N_\alpha:=[-N/(2N)^\alpha,N/(2N)^\alpha]\;,
\end{equ}
and $u^N(x):=0$ for all $x\in \R\backslash I^N_\alpha$.

\begin{theorem}[\cite{LabbeReview}]\label{Th:Static}
Take $\alpha \in (0,1)$. The law of the process $u^N$ under $\mu_N$ converges to the law of the centred Gaussian process $(B_\alpha(x),x\in\R)$, with covariance
\begin{equ}
\E\big[B_\alpha(x)B_\alpha(y)\big] = \frac{q_\alpha(-\infty,x)\, q_\alpha(y,+\infty)}{q_\alpha(-\infty,+\infty)}\;,\quad\forall x\leq y \in \R\;,
\end{equ}
where $q_\alpha(x,y) = \int_{x \wedge y}^{x\vee y} L''(2u) du$.
\end{theorem}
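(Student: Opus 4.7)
The plan is to pass to the increment variables $\xi_i = S(i) - S(i-1) \in \{-1, +1\}$, $i = 1, \ldots, 2N$, and exploit the fact that $\mu_N$ is a product measure conditioned on a linear constraint. Abel summation gives $A(S) = \sum_{i=1}^{2N}(2N-i+1)\xi_i$; since the bridge constraint forces $\sum_i \xi_i = 0$, any additive constant in the coefficient of $\xi_i$ is irrelevant, so
$$\mu_N(\xi) = \frac{1}{Z'_N}\prod_{i=1}^{2N} e^{h^N_i \xi_i}\,\un_{\sum_i \xi_i = 0},$$
with $h^N_i$ as in (\ref{Eq:DefSigmaN}). Thus $\mu_N$ is the conditioning on $\{\sum_i\xi_i = 0\}$ of the product measure $\nu_N$ under which the $\xi_i$ are independent with $\E_{\nu_N}[\xi_i] = \tanh(h^N_i) = L'(h^N_i)$ and $\Var_{\nu_N}(\xi_i) = L''(h^N_i)$. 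By construction, $u^N(x_k) = (2N)^{-\alpha/2}\sum_{i=1}^k (\xi_i - L'(h^N_i))$, and the antisymmetry $h^N_i = -h^N_{2N+1-i}$ yields $\sum_i L'(h^N_i) = 0$, so the bridge constraint translates exactly into $u^N(x_{2N}) = 0$.

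Next I would prove finite-dimensional convergence under the \emph{unconstrained} measure $\nu_N$. Each $u^N(x_k)$ is a linear combination of $2N$ independent centred uniformly bounded variables, so the Lindeberg CLT applies and only the covariance needs to be identified. For $x = x_k \le x_\ell = y$,
$$\mathrm{Cov}_{\nu_N}\!\big(u^N(x), u^N(y)\big) = (2N)^{-\alpha}\sum_{i=1}^{k} L''(h^N_i) \;\xrightarrow[N\to\infty]{}\; \int_{-x}^{+\infty} L''(2u)\,du = q_\alpha(-\infty, x),$$
where the last equality uses that $L''$ is even and that $\int_{N/(2N)^\alpha}^{\infty} L''(2u)\,du$ decays exponentially. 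This gives joint convergence under $\nu_N$ to the centred Gaussian process $U$ with $\E[U(x)U(y)] = q_\alpha(-\infty, x\wedge y)$.

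The step from $\nu_N$ to $\mu_N$ is a conditioning on the last coordinate, which in Gaussian terms produces exactly the target covariance. Indeed, conditioning $U$ on $U(x_{2N}) = 0$ gives, for $x \leq y$,
$$\mathrm{Cov}(B_\alpha(x), B_\alpha(y)) = q_\alpha(-\infty,x) - \frac{q_\alpha(-\infty,x)\,q_\alpha(-\infty,y)}{q_\alpha(-\infty,+\infty)} = \frac{q_\alpha(-\infty,x)\,q_\alpha(y,+\infty)}{q_\alpha(-\infty,+\infty)}.$$
To justify this conditioning at the discrete level I would apply a joint local central limit theorem for $(u^N(x_1), \ldots, u^N(x_m), u^N(x_{2N}))$ under $\nu_N$: together with the global CLT above, it ensures that the law conditioned on $u^N(x_{2N}) = 0$ converges to the corresponding Gaussian conditional law. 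Process-level convergence on $\R$ then follows from Kolmogorov's criterion, using the estimate $\E_{\mu_N}[(u^N(y) - u^N(x))^2] \lesssim |y - x|$ (uniform in $N$, from the boundedness of $L''$), together with the exponential decay of $L''(2u)$ for large $|u|$, which forces $u^N$ to be close to $0$ outside a bounded window in $x$.

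The main obstacle I anticipate is the local CLT at the endpoint. The $\xi_i$ are $\{-1,+1\}$-valued with position-dependent bias, and only about $(2N)^\alpha$ of them have variance bounded away from $0$ — those with $|h^N_i| = O(1)$, i.e., $i$ in a window of width $(2N)^\alpha$ around $N$; outside that window $L''(h^N_i)$ decays exponentially and the corresponding increments contribute essentially deterministically. Obtaining a sharp LLT with rate $(2N)^{-\alpha/2}$ for this inhomogeneous lattice random walk therefore requires a direct characteristic-function estimate, splitting the integral over a neighbourhood of $0$ (handled by an Edgeworth-type expansion based on the macroscopic window) and the remaining frequencies (handled by uniform positivity of $L''(h^N_i)$ on the window together with the $\pm 1$ lattice structure). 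Once this is established, the conditioning step and the tightness argument become essentially routine.
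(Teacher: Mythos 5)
The paper does not prove Theorem~\ref{Th:Static}: it is quoted from the companion article~\cite{LabbeReview}, so there is no in-paper proof to compare your attempt against. That said, your proposal is the natural route and is almost certainly the one taken in~\cite{LabbeReview} (the acknowledgements single out~\cite{DobrushinHryniv}, whose local-CLT-plus-conditioning machinery for conditioned inhomogeneous walks is exactly what you invoke). Your algebra checks out at every step: the Abel summation $A(S)=\sum_i(2N-i+1)\xi_i$; the observation that the constant shift $N+\tfrac12$ in the coefficient of $\xi_i$ is annihilated by the zero-sum constraint, so $\mu_N$ is the Bernoulli product with tilts exactly $h^N_i$ conditioned on $\sum_i\xi_i=0$; the identities $\E_{\nu_N}[\xi_i]=\tanh h^N_i=L'(h^N_i)$ and $\Var_{\nu_N}(\xi_i)=L''(h^N_i)$; the antisymmetry $h^N_{2N+1-i}=-h^N_i$ turning the bridge constraint into $u^N(x_{2N})=0$; the Riemann-sum limit $q_\alpha(-\infty,x\wedge y)$ of the free covariance (using that $L''$ is even and decays exponentially); and the Gaussian-bridge covariance after conditioning. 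You also correctly identify the only genuinely delicate step, namely the local CLT for $(2N)^{-\alpha/2}\sum_i(\xi_i-L'(h^N_i))$ where only $\Theta((2N)^\alpha)$ of the $2N$ summands carry non-negligible variance, so the LLT must be run with the effective sample size via a characteristic-function argument split between low and high frequencies.

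One point to tighten: a second-moment bound $\E_{\mu_N}[(u^N(y)-u^N(x))^2]\lesssim|y-x|$ does not by itself trigger the Kolmogorov--Chentsov criterion, which needs $\E[|u^N(y)-u^N(x)|^p]\lesssim|y-x|^{1+\beta}$ with $\beta>0$. Since under $\nu_N$ the increment is a sum of independent centred variables bounded by $2(2N)^{-\alpha/2}$, Rosenthal's inequality gives, for $|y-x|\ge(2N)^{-\alpha}$,
\begin{equ}
\E_{\nu_N}\big[|u^N(y)-u^N(x)|^4\big]\lesssim|y-x|^2+\frac{|y-x|}{(2N)^\alpha}\lesssim|y-x|^2\;,
\end{equ}
with the sub-mesh scales handled by linear interpolation. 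This estimate then passes from $\nu_N$ to $\mu_N$ once the local CLT is available, since the density of the conditioned law with respect to $\nu_N$ on any finite block of coordinates is uniformly bounded in $N$. With that correction, your plan (free CLT, joint LLT for $(u^N(x_1),\ldots,u^N(x_m),u^N(x_{2N}))$, Gaussian conditioning, tightness) is complete in outline; the remaining work is the LLT itself, which you rightly flag as the heart of the matter.
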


\noindent The process $B_\alpha$ can be obtained by composing a Brownian bridge on $(0,1)$ with a deterministic time-change that maps $\R$ onto $(0,1)$.

\medskip

We turn our attention to the dynamics of this model. In the sequel, $\dot{W}$ will denote a space-time white noise on $\R$. If one starts the interface at equilibrium, then classical techniques based on the Boltzmann-Gibbs principle ensure that the scaling limit (after diffusive rescaling in height-space-time) is given by the solution of a stochastic heat equation. More precisely, if we set
$$u^N(t,x) := \frac{S(t(2N)^{2\alpha},N + x(2N)^\alpha)-\Sigma_\alpha^N(x)}{(2N)^{\frac{\alpha}{2}}} \;,\quad \forall x\in I^N_\alpha\;,\quad \forall t\ge 0\;,$$
then we have the following result.
\begin{theorem}[\cite{LabbeReview}]
The sequence $u^N$, starting from the equilibrium measure $\mu_N$, converges in distribution in the Skorohod space $\bbD([0,\infty),\cC(\R))$ towards the solution $u$ of the following stochastic PDE
\begin{equ}\label{SHE3}
\partial_t u = \frac{1}{2} \partial^2_x u -2 \partial_x \Sigma_\alpha\, \partial_x u +  \sqrt{1-(\partial_x \Sigma_\alpha)^2}\,\dot{W}\;,\quad x\in\R\;,
\end{equ}
started from an independent realisation of $B_\alpha$.
\end{theorem}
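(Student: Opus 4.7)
The plan is to run the classical martingale-problem programme. Testing $u^N(t)$ against a smooth compactly supported $\varphi$ and applying Dynkin's formula to the Markov chain run on the time scale $(2N)^{2\alpha}$ gives
\[
\langle u^N(t),\varphi\rangle = \langle u^N(0),\varphi\rangle + \int_0^t D^N_s(\varphi)\,ds + M^N_t(\varphi),
\]
where $\langle\cdot,\cdot\rangle$ is the discretized $L^2$ pairing over $\{x_k\}$ and $M^N(\varphi)$ is a martingale with explicit predictable bracket. The three steps are: (i) identify the limits of $D^N_s(\varphi)$ and $\langle M^N(\varphi)\rangle_t$; (ii) prove tightness of $u^N$ in $\bbD([0,\infty),\cC(\R))$; (iii) conclude via uniqueness of the martingale problem for the \emph{linear} SPDE \eqref{SHE3}. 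The initial condition matches $B_\alpha$ by Theorem~\ref{Th:Static}.

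The substance lies in the drift. Writing $\cL_N S(k) = \tfrac12\Delta S(k) + (2p_N-1)\un_{\{\text{corner at }k\}}$ with $\un_{\{\text{corner at }k\}} = \tfrac12 - 2\bar\eta(k)\bar\eta(k+1)$ and $\bar\eta := \eta - \tfrac12$, and using the expansions $2p_N-1 = 2(2N)^{-\alpha} + O((2N)^{-3\alpha})$ together with $L'(h^N_{k+1}) - L'(h^N_k) = -2(2N)^{-\alpha}L''(h^N_k) + O((2N)^{-2\alpha})$, a direct computation shows that the two a priori diverging deterministic pieces appearing in $(2N)^{3\alpha/2}\cL_N S(k)$ combine into $(2N)^{\alpha/2}\tanh^2(h^N_k)$, which is precisely $4(2N)^{\alpha/2}\bar\pi_k\bar\pi_{k+1}$ up to a vanishing correction, where $\bar\pi_k := L'(h^N_k)/2$ is the equilibrium mean of $\bar\eta(k)$ under the product-Bernoulli approximation of $\mu_N$. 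Modulo this centring, the rescaled pointwise drift reduces to
\[
\tfrac12 \partial_x^2 u^N(x) \;-\; 4(2N)^{\alpha/2}\big(\bar\eta(k)\bar\eta(k+1) - \bar\pi_k\bar\pi_{k+1}\big) + o(1).
\]

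The centred quadratic decomposes as $(\bar\eta(k)-\bar\pi_k)(\bar\eta(k+1)-\bar\pi_{k+1}) + \bar\pi_{k+1}(\bar\eta(k)-\bar\pi_k) + \bar\pi_k(\bar\eta(k+1)-\bar\pi_{k+1})$. Using the identity $(2N)^{\alpha/2}(\bar\eta(k+1)-\bar\pi_{k+1}) = \tfrac12 \partial_x u^N(x) + o(1)$ together with $h^N_k \to -2x$ and $L'(-2x) = -\tanh(2x) = \partial_x\Sigma_\alpha(x)$, the two linear pieces converge to $-2\,\partial_x\Sigma_\alpha(x)\,\partial_x u(t,x)$, matching the drift in \eqref{SHE3}. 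The quadratic remainder $(2N)^{\alpha/2}(\bar\eta(k)-\bar\pi_k)(\bar\eta(k+1)-\bar\pi_{k+1})$ is killed by the Boltzmann--Gibbs principle: integrated in time against $\varphi$, its $L^2$-norm vanishes as $N\to\infty$. A parallel and simpler computation for the predictable bracket uses that each corner flip changes $\langle u^N,\varphi\rangle$ by $\pm 2\varphi(x_k)(2N)^{-3\alpha/2}$ at rate $\tfrac12 + O((2N)^{-\alpha})$, and that $\E_{\mu_N}[\un_{\{\text{corner at }k\}}] = L''(h^N_k)/2 + o(1)$, yielding
\[
\langle M^N(\varphi)\rangle_t \;\xrightarrow[N\to\infty]{}\; \int_0^t\!\!\int_\R \varphi(x)^2\big(1-(\partial_x\Sigma_\alpha(x))^2\big)\,dx\,ds,
\]
which is the expected Itô bracket of the noise $\sqrt{1-(\partial_x\Sigma_\alpha)^2}\,\dot W$ in \eqref{SHE3}.

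Tightness of $u^N$ in $\bbD([0,\infty),\cC(\R))$ follows from standard criteria (Aldous in time, Kolmogorov in space) combined with moment bounds at equilibrium inherited from Theorem~\ref{Th:Static}; stationarity of $\mu_N$ yields uniform-in-$t$ control of $u^N(t)$. Any subsequential limit then satisfies the martingale problem associated to \eqref{SHE3} with initial law $B_\alpha$, and this problem is well-posed since the SPDE is linear with bounded smooth coefficients. The main obstacle I anticipate is the rigorous Boltzmann--Gibbs replacement in the present \emph{inhomogeneous} setting: the equilibrium density $\bar\pi_k$ varies smoothly on the lattice scale $(2N)^\alpha$, so the classical one-block / two-block estimates, designed for homogeneous product measures, do not apply verbatim. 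A workable route is to partition the bulk into mesoscopic boxes of size $\varepsilon(2N)^\alpha$ on which $\bar\pi_k$ is essentially constant, apply the BG bound inside each box via a Dirichlet-form estimate uniform in $N$, then sum over boxes and send $\varepsilon \to 0$ after $N \to \infty$.
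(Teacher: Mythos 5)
The paper does not actually prove this statement: the theorem is attributed to the companion paper \cite{LabbeReview}, and the present text offers only the one-line indication that ``classical techniques based on the Boltzmann--Gibbs principle'' give the result. Your proposal follows exactly that route — Dynkin decomposition, identification of drift and quadratic variation via a second-order Boltzmann--Gibbs replacement, tightness, and uniqueness of the linear martingale problem — so the strategy is consistent with what the authors indicate. Your algebra checks out: the identity $\un_{\{\text{corner at }k\}}=\tfrac12-2\bar\eta(k)\bar\eta(k+1)$ is correct, the two $O((2N)^{-\alpha})$-size deterministic contributions ($\tfrac12\Delta\Sigma^N_\alpha$ and the constant part of $(2p_N-1)\un_{\text{corner}}$) do cancel to leading order leaving $4(2N)^{\alpha/2}\bar\pi_k\bar\pi_{k+1}$, and the linear terms in the centred product give $-2\,\partial_x\Sigma_\alpha\,\partial_x u$ with $\partial_x\Sigma_\alpha(x)=-\tanh(2x)$. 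Also note, as a sanity check you could add, that the natural candidate $\bar\pi_k = L'(h^N_k)/2$ already incorporates the canonical tilt: under the naive grand-canonical product associated with the weight $\lambda^{A(S)/2}$ the marginal mean is $\tfrac12\tanh\bigl(\tfrac{2(2N+1-k)}{(2N)^\alpha}\bigr)$; the constraint $\sum_k\eta(k)=N$ recenters this to $\tfrac12\tanh(h^N_k)$, which is indeed $\bar\pi_k$.

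The genuine gap — which you correctly flag yourself — is the Boltzmann--Gibbs replacement in the \emph{inhomogeneous} setting. Two points merit sharpening. First, $\mu_N$ is \emph{not} product, and its grand-canonical counterpart is inhomogeneous; so both the one-block step (local ergodicity) and the two-block step (comparison of mesoscopic averages) must be reworked for slowly varying Bernoulli marginals rather than invoked off the shelf. Second, the quadratic error you need to kill is
\begin{equ}
(2N)^{-\alpha/2}\sum_{k}\varphi(x_k)\big(\bar\eta_s(k)-\bar\pi_k\big)\big(\bar\eta_s(k+1)-\bar\pi_{k+1}\big)\,,
\end{equ}
integrated in time; the prefactor here is \emph{not} summable at the static level (the sum of $\sim(2N)^\alpha$ approximately-centred $O(1)$ variables is $O((2N)^{\alpha/2})$), so one must use the dynamics in an essential way — the Kipnis--Varadhan / $H^{-1}$-type estimate giving a Dirichlet-form bound uniformly across the window. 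Your proposed mesoscopic-box strategy (freeze $\bar\pi_k$ on scale $\varepsilon(2N)^\alpha$, apply homogeneous BG in each box, let $\varepsilon\to0$) is the standard fix and is plausible, but a proof would need to verify uniformity of the spectral-gap/Dirichlet-form input across boxes with drifting density and to control the boundary-of-box contributions where the freeze is poorest. Finally, you should say explicitly that the limiting initial condition $B_\alpha$ is \emph{independent} of the driving noise $\dot W$; this is what the theorem asserts and it follows from stationarity (the initial configuration is $\cF_0$-measurable and $\dot W$ is built from the martingale part of the dynamics), but it is worth stating rather than leaving implicit.
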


Here $\Sigma_\alpha(x) = \lim_{N\rightarrow\infty} (\Sigma^N_\alpha(x)-N)/(2N)^\alpha$ for all $x\in\R$.

\subsection{Hydrodynamic limit}

If one starts the interface out of equilibrium then one needs to derive the hydrodynamic limit, and this is one the main results of this paper. Let us rescale the interface in the following way
\begin{equ}
m^N(t,x) := \frac{S\big(t(2N)^{1+\alpha},x2N\big)}{2N}\;,\quad t\geq 0\;,\quad x\in[0,1]\;,
\end{equ}
and let us introduce the density of particles by setting
\begin{equ}\label{Eq:DefrhoN}
\rho^N(t,dx)=\frac1{2N}\sum_{k=1}^{2N} \eta(t(2N)^{1+\alpha},k)\ \delta_{\frac{k}{2N}}(dx)\;.
\end{equ}
Let $\cM$ be the set of measures on $[0,1]$ with total mass less or equal to $1$, equipped with the weak topology. At any time $t$, $\rho^N(t)$ has total mass $1/2$. Also, $m^N(t,\cdot)$ is $1$-Lipschitz at any time; therefore, any sequence of initial conditions $m^N(0,\cdot)$ is tight in $\cC([0,1],\R)$, all the limit points are $1$-Lipschitz and their derivatives are in $L^\infty$.

\begin{theorem}[Hydrodynamic limit]\label{Th:Hydro}
Take $\alpha \in (0,1)$. Assume that $m^N(0,\cdot)$ is deterministic and converges to some limiting profile $m(0,\cdot)$ for the supremum norm. Then, $\rho^N$ converges in law for the Skorohod topology on $\bbD([0,\infty),\cM)$ towards $\rho(t,dx)=\eta(t,x) dx$ where $\eta$ is the entropy solution of the inviscid Burgers equation with zero-flux boundary condition:
\begin{equ}\label{PDEBurgersDensity}
	\begin{cases}\partial_t \eta = 2\partial_x\big(\eta(1-\eta)\big) \;,\quad x\in(0,1)\;,\quad t>0\\
	\big(\eta(1-\eta)\big)(t,x) = 0\;,\quad x\in\{0,1\}\;,\quad t>0\;,\\
	\eta(0,\cdot) = \big(\partial_x m(0,\cdot)+1\big)/2\;.\end{cases}
\end{equ}
Furthermore, the sequence $m^N$ converges in $\bbD([0,\infty),\cC([0,1]))$ towards the integrated solution $m(t,x) = \int_{y=0}^x (2\eta(t,y) -1) dy$ of (\ref{PDEBurgersDensity}).
\end{theorem}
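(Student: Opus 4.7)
The plan is to follow the standard three-step strategy---tightness, identification, and uniqueness---adapted to this weakly asymmetric, hyperbolic-type scaling. Tightness of $m^N$ in $\bbD([0,\infty),\cC([0,1]))$ is essentially for free: spatial equicontinuity holds since $m^N(t,\cdot)$ is $(2N)^{-1}$-close to a $1$-Lipschitz function, and for time regularity one applies Dynkin's formula to $\langle \rho^N,\phi\rangle$ with $\phi\in\cC^2([0,1])$. The predictable bracket of the resulting martingale is of order $t(2N)^{\alpha-2}\|\phi'\|_\infty^2$, since each microscopic swap shifts $\langle \rho^N,\phi\rangle$ by $\cO(N^{-2})$ and the total rate of jumps in rescaled time is $\cO(N^{2+\alpha})$. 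Combined with a uniform $L^\infty$ bound on the generator drift (derived in the next step), Aldous' criterion yields tightness of $\rho^N$ in $\bbD([0,\infty),\cM)$; tightness of $m^N$ follows since $m^N$ is essentially a primitive of $2\rho^N-1$.

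I would then compute the rescaled generator. Writing the instantaneous rightward bond current as
\begin{equ}
J_{k,k+1} = \tfrac{1}{2}(\eta_k-\eta_{k+1}) -\tfrac{\epsilon_N}{2}\bigl(\eta_k(1-\eta_{k+1})+\eta_{k+1}(1-\eta_k)\bigr)\;,\qquad \epsilon_N:=2p_N-1\;,
\end{equ}
and summing by parts, the symmetric part carries the prefactor $(2N)^{\alpha-1}\to 0$ and drops out, while the antisymmetric part has prefactor $(2N)^\alpha\epsilon_N\to 2$. One obtains, up to negligible errors,
\begin{equ}
(2N)^{1+\alpha}\cL_N \langle\rho^N,\phi\rangle \;\approx\; \frac{1}{2N}\sum_{k=1}^{2N-1}\bigl(\eta_k(1-\eta_{k+1})+\eta_{k+1}(1-\eta_k)\bigr)\phi'\bigl(k/(2N)\bigr)\;.
\end{equ}
The key analytic input is to replace the microscopic product $\eta_k(1-\eta_{k+1})$ by $\rho(s,k/(2N))(1-\rho(s,k/(2N)))$. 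I would do this via a one-block and two-blocks estimate in the spirit of Guo--Papanicolaou--Varadhan: on a mesoscopic box of size $\ell$ with $1\ll\ell\ll N$, the symmetric part of the dynamics equilibrates on the time scale $\ell^2\ll(2N)^{1+\alpha}$, so the product can first be replaced by a function of the empirical block density $\bar\eta^\ell_k$ and then by the macroscopic density. The asymmetric drift contributes at most $\cO(\epsilon_N^2(2N)^{1+\alpha})=\cO(N^{1-\alpha})$ to the entropy bound, which is tractable for $\alpha\in(0,1)$ provided the reference measure is the invariant measure $\mu_N$ rather than a product Bernoulli law.

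Two features specific to this model require care. The zero-flux boundary condition arises naturally because no current can cross the sites $k=0$ or $k=2N$: the martingale identity extends to test functions with $\phi(0)$ or $\phi(1)$ non-zero, and the boundary terms $[2\eta(1-\eta)\phi]_0^1$ produced by summation by parts must vanish in the limit, forcing $\eta(1-\eta)|_{x\in\{0,1\}}=0$. To identify the limit as the \emph{entropy}, rather than merely weak, solution, I would implement Rezakhlou's coupling approach, exploiting the attractiveness of the corner growth dynamics: coupling with stationary product Bernoulli initial data yields microscopic Kruzhkov inequalities for the flux $f(\rho)=2\rho(1-\rho)$ that survive passing to the limit. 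Uniqueness of the entropy solution on a bounded interval with zero-flux boundary is then supplied by the Bardos--Leroux--Nédélec theory of scalar conservation laws, which pins down the limit from the initial profile. The convergence of $m^N$ to $m$ then follows from that of $\rho^N$ by integration in $x$. I expect the main obstacle to lie in pushing the GPV replacement lemma and the Kruzhkov inequalities all the way up to the two boundaries, where translation invariance is broken, $\mu_N$ is highly non-product, and the standard spectral-gap inputs for local exclusion dynamics must be adapted so that the zero-flux condition is genuinely inherited by the limit.
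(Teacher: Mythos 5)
Your outline recognizes the right ingredients (tightness, one/two-block replacement, Rezakhanlou-type coupling, uniqueness of entropy solutions), but it glosses over exactly the places where this model departs from the textbook situation, and two of those gaps would actually stop the argument.

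The first gap is the boundary condition. You argue that testing with $\varphi$ not vanishing at $\{0,1\}$ produces boundary terms $[2\eta(1-\eta)\varphi]_0^1$ that must vanish, "forcing" the zero-flux condition. For a hyperbolic conservation law this is not a valid deduction: boundary conditions for entropy solutions must be understood in a relaxed (BLN/boundary-entropy) sense, boundary layers are allowed, and merely proving a weak formulation does not identify the trace nor single out the entropy solution. The paper therefore does something structurally different: it first proves (Proposition~\ref{Prop:PDE}) that the zero-flux entropy solution of~(\ref{PDEBurgersDensity}) is the \emph{same} as the BLN entropy solution of the Dirichlet problem~(\ref{PDEBurgersDirichlet}) with data $\eta(t,0)=1$, $\eta(t,1)=0$, and then proves convergence of the particle system to that Dirichlet problem via entropy inequalities that carry explicit boundary terms, as in~(\ref{Eq:EntropyCond}). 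The zero-flux statement is inherited through the PDE equivalence, not read off the microscopic dynamics. A related issue: you attribute well-posedness of the zero-flux problem to Bardos--LeRoux--N\'ed\'elec, but that is the Dirichlet theory; the zero-flux theory used here is B\"urger--Frid--Karlsen.

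The second gap is the coupling. You propose "coupling with stationary product Bernoulli initial data," but on $\{1,\dots,2N\}$ with zero-flux boundary conditions, Bernoulli$(c)$ is \emph{not} invariant for the dynamics (the invariant measure is $\mu_N$), so running a second copy of the same chain started at Bernoulli$(c)$ does not give you a stationary comparison process. The paper instead builds a modified process $\zeta^N$ via $\tilde{\cL}^{\mbox{\tiny bdry}}$, with boundary reservoirs tuned precisely so that product Bernoulli$(c)$ is stationary, so that the order-preserving coupling increases the number of sign changes by at most a constant (Lemma~\ref{Lemma:Coupling}), and so that the injection rates generate exactly the boundary terms $2(1-c)^\pm\varphi(s,0)+2(0-c)^\pm\varphi(s,1)$ in Lemma~\ref{Lemma:MicroIneq} that match the Dirichlet data $(1,0)$. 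Without this boundary construction the microscopic Kruzhkov inequalities simply do not close. Finally, you do not address how to pass from product Bernoulli initial data with piecewise-constant density (Assumption~\ref{Assumption:IC}, which is what the whole coupling machinery requires, since the number of initial sign changes must be bounded) to a general deterministic $1$-Lipschitz profile $m(0,\cdot)$, which is what the theorem actually claims. The paper handles this by sandwiching $m_0$ between piecewise-affine profiles $m_0^{\epsilon,\pm}$, using an order-preserving coupling of three copies of the chain, and sending $\epsilon\downarrow 0$ via the $L^1$-contractivity of the entropy solution map (Proposition~\ref{Prop:EntropySolution}). This squeezing step is an essential part of the proof and is absent from your proposal.
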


Let us mention that this hyperbolic PDE does not admit unique weak solutions so that one needs to consider entropy solutions. When the domain is bounded, the first solution theory was proposed by Bardos, Le Roux and N\'ed\'elec~\cite{Bardos} in the setting of Dirichlet boundary conditions: in that case, the solution does not necessarily fulfill the prescribed boundary conditions, but satisfies instead the so-called BLN conditions at the boundaries, see (\ref{Eq:BLN}). The solution theory with zero-flux boundary condition was introduced by B\"urger, Frid and Karlsen~\cite{BFK07}. To the best of our knowledge, this type of boundary conditions for the inviscid Burgers equation has not been considered before in scaling limits of particle systems.

To prove this theorem, we actually use a slightly indirect approach. First, we show that the solution of (\ref{PDEBurgersDensity}) coincides with the solution of the same equation with appropriate Dirichlet boundary conditions: namely, we impose $\eta(t,0)=1$ and $\eta(t,1)=0$. As mentioned above, these Dirichlet boundary conditions have to be interpreted in the BLN sense. Second, we prove convergence of our system towards this alternative PDE. This second part of the proof is in the spirit of the works of Rezakhanlou~\cite{Reza} and Bahadoran~\cite{Baha}. Let us outline the main differences. Here we deal with an asymmetry that vanishes with the size of the system, while the asymmetry is fixed in these two works. This induces our time-scaling $N^{1+\alpha}$ as opposed to the usual Euler scaling $N$, and some specific arguments are needed in the proof. Furthermore, our initial conditions are in general not given by product measures while this is the standing assumption in the two aforementioned works. However, using the $L^1$ contractivity of the solution map associated to (\ref{PDEBurgersDensity}) and a simple approximation argument, we obtain our convergence result.

Let us give a heuristic explanation for the Dirichlet boundary conditions chosen in the alternative PDE. Our particle system could essentially be obtained as the restriction to $\{1,\ldots,2N\}$ of the same dynamics on $\Z$, starting with the same configuration on $\{1,\ldots,2N\}$ and with particles on every negative site, and no particles on every site larger than $2N$. Indeed, with such an initial configuration and given the asymmetry of the dynamics, at any positive time there would still be essentially no particles after site $2N$ and no holes before site $1$. This corresponds to the chosen Dirichlet boundary conditions.

An important feature of this hyperbolic PDE is that, for any initial condition, it reaches the macroscopic stationary state $x\mapsto x\wedge(1-x)$ in finite time. Notice that for $\alpha \geq 1$, the hydrodynamic limit is given by a heat equation so that it takes infinite time to reach the stationarity state, see~\cite{LabbeReview}. This feature of the PDE has a non-trivial consequence at the level of the KPZ fluctuations, that we now investigate.

\subsection{KPZ fluctuations}

For convenience, we consider the flat initial condition:
\begin{equ}
S(0,k) = k \mbox{ mod } 2\;,\qquad \forall k\in\{0,\ldots,2N\}\;.
\end{equ}
In that case, the height function associated to the solution of (\ref{PDEBurgersDensity}) is given by:
\begin{equ}\label{Eq:ExplicitHydroFlat}
m(t,x) = x\wedge (1-x) \wedge t\;,\quad t > 0\;,\quad x\in [0,1]\;,
\end{equ}
see Figure \ref{FigKPZ}. The macroscopic stationary state is reached at the finite time $t_f=1/2$.
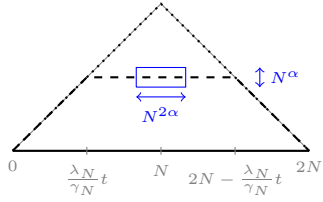
\begin{figure}
\begin{center}
\begin{tikzpicture}[scale=1.3]
\draw[-,gray] (0,0) node[below]{\tiny $0$} -- (3,0) node[below]{\tiny $2N$};
\draw[-,gray] (0,0) -- (1.5,1.5) -- (3,0);

\draw[-,thick] (0,0) -- (3,0);% node[right]{$t=0$};

\draw[-,dashed,thick] (0,0) -- (0.75,0.75) -- (2.25,0.75) -- (3,0);
%\draw (3,0.75) node[right]{$t > 0$};

\draw[-,dotted,thick] (0,0) -- (1.5,1.5) -- (3,0);
%\draw (3,1.5) node[right]{$t= \frac1{2\sigma}$};

\draw[gray] (1.5,-0.04)node[below]{\tiny $N$}--(1.5,0.04);
\draw[gray] (0.75,-0.04)node[below]{\tiny$\frac{\lambda_N}{\gamma_N}t$}--(0.75,0.04);
\draw[gray] (2.25,-0.04)node[below]{\tiny$2N-\frac{\lambda_N}{\gamma_N}t$}--(2.25,0.04);

\draw[thin,blue] (1.25,0.65) -- (1.75,0.65) -- (1.75,0.85) -- (1.25, 0.85) -- (1.25,0.65);
\draw[<->,blue] (1.25,0.55) --(1.5,0.55)node[below]{\tiny $N^{2\alpha}$} -- (1.75,0.55);
\draw[<->,blue] (2.5,0.65) --(2.5,0.75)node[right]{\tiny $N^{\alpha}$} -- (2.5,0.85);

\end{tikzpicture}
\end{center}
\caption{A plot of (\ref{Eq:ExplicitHydroFlat}): the bold black line is the initial condition, the dashed line is the solution at some time $0 < t < 1/2$, and the dotted line is the solution at the terminal time $1/2$. The blue box corresponds to the window where we see KPZ fluctuations.}\label{FigKPZ}
\end{figure}

One would naturally expect KPZ fluctuations to occur around this growing front. Let us recall a famous result of Bertini and Giacomin in that direction. They consider the WASEP on the infinite lattice $\Z$, with upwards asymmetry $\epsilon$. Starting from a flat initial profile, the hydrodynamic limit grows uniformly in space and at constant speed. Bertini and Giacomin look at fluctuations around this hydrodynamic limit according to the following scaling: $\epsilon$ in height, $\epsilon^{-2}$ in space and $\epsilon^{-4}$ in time. They show that the rescaled interface converges to the Hopf-Cole solution of the KPZ equation on $\R$. We stress that this height-space-time scaling is rigid in order to observe KPZ fluctuations in the exclusion process.

In our setting, one would therefore expect a similar result to occur: the rescaling should be given by $1/(2N)^\alpha$ in height, $(2N)^{2\alpha}$ in space and $(2N)^{4\alpha}$ in time. Since the lattice is finite, one easily sees that $\alpha$ should not be taken larger than $1/2$ and therefore, it would be reasonable to make the following guess: for $\alpha \in (0,1/2)$, the fluctuations around the hydrodynamic limit are given by the KPZ equation on the line; for $\alpha =1/2$, they are given by the KPZ equation on $[0,1]$ with Dirichlet boundary conditions.

It turns out that KPZ fluctuations only occur for $\alpha \in (0,1/3]$, and that for the specific choice $\alpha = 1/3$ these fluctuations suddenly vanish at a deterministic finite time. This is the main result of the present paper. To state the precise result, we set
\begin{equ}\label{Eq:hN}
h^N(t,x) := \gamma_N S\big(t(2N)^{4\alpha},N + x(2N)^{2\alpha}\big) - \lambda_N t \;,
\end{equ}
where
\begin{equs}\label{Eq:GammaLambda}
	\gamma_N &:= \frac12 \log \frac{p_N}{1-p_N} \sim \frac{2}{(2N)^\alpha}\;,\\
	c_N &:= \frac{(2N)^{4\alpha}}{e^{\gamma_N} + e^{-\gamma_N}} \sim \frac{(2N)^{4\alpha}}{2} \;,\\
	\lambda_N &:= c_N( e^{\gamma_N} -2 + e^{-\gamma_N}) \sim 2(2N)^{2\alpha}\;.
\end{equs}

\begin{theorem}[KPZ fluctuations]\label{Th:KPZ}
Take $\alpha \in (0,1/3]$ and consider the flat initial condition. As $N\rightarrow\infty$, the sequence $h^N$ converges in distribution to the solution of the KPZ equation:
\begin{equ}\label{KPZ}
\begin{cases}
\partial_t h = \frac{1}{2} \partial^2_x h - (\partial_x h)^2 + \dot{W}\;,\quad x\in\R\;,\quad t > 0\;,\\
h(0,x)=0\;.
\end{cases}
\end{equ}
The convergence holds in $\bbD\big([0,T),\cC(\R)\big)$ where $T=1/2$ when $\alpha=1/3$, and $T=\infty$ when $\alpha < 1/3$. Here $\bbD\big([0,T),\cC(\R)\big)$ is endowed with the topology of uniform convergence on compact subsets of $[0,T)$.
\end{theorem}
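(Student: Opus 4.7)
The plan is to follow the Hopf--Cole approach of Bertini--Giacomin, adapted to our finite-volume, vanishing-asymmetry setting. I introduce $Z^N(t,x) := \exp(-h^N(t,x))$, defined on the rescaled lattice sites $x=(k-N)/(2N)^{2\alpha}$, $k\in\{0,\ldots,2N\}$, and extended by linear interpolation. A direct computation using the time-changed generator $(2N)^{4\alpha}\cL_N$, together with the specific choice of $\gamma_N, c_N, \lambda_N$ in \eqref{Eq:GammaLambda}, shows that $Z^N$ solves a discrete stochastic heat equation of the form
\begin{equ}
dZ^N(t,x_k) \;=\; \frac{c_N}{(2N)^{4\alpha}}\,\Delta^N Z^N(t,x_k)\,dt \;+\; Z^N(t,x_k^{-})\,dM^N(t,x_k)\;,
\end{equ}
where $\Delta^N$ is the rescaled discrete Laplacian (with $c_N/(2N)^{4\alpha}\to 1/2$), and $(M^N(\cdot,x_k))_k$ are orthogonal martingales whose predictable brackets are explicit polynomials in $\eta$ and $p_N$, asymptotic to $Z^N(t,x_k)^2\cdot 4\eta(1-\eta)(t,x_k)\,dt$ up to subleading corrections.

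\textbf{Mild form, tightness, identification.} Let $p^N_t$ be the discrete heat kernel of $\frac{c_N}{(2N)^{4\alpha}}\Delta^N$. Rewriting $Z^N$ in Duhamel form and exploiting Gaussian-type bounds on $p^N_t$, I derive uniform-in-$N$ moment bounds and spatial/temporal equicontinuity of $Z^N$ on compact subsets of $[0,T)\times\R$; these yield tightness in $\bbD([0,T'],\cC(\R))$ for every $T'<T$. Any limit point $Z$ is then shown to solve the multiplicative SHE $\partial_t Z = \tfrac12 \partial_x^2 Z + Z\,\dot{W}$ started from $Z(0,\cdot)\equiv 1$; the crucial input is a Boltzmann--Gibbs type replacement lemma substituting $4\eta(1-\eta)$ by its equilibrium value $1$ at mesoscopic scales in the bracket of $M^N$. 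Uniqueness of positive solutions to this SHE identifies the limit as the Hopf--Cole solution of \eqref{KPZ}, and convergence of $h^N = -\log Z^N$ follows by the continuous mapping theorem together with strict positivity of $Z$.

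\textbf{Role of $T$ and main obstacle.} The Boltzmann--Gibbs step requires the density in the relevant window to be close to $1/2$. Reading Theorem \ref{Th:Hydro} on the KPZ time scale $t(2N)^{4\alpha}$ corresponds to macroscopic time $\tau = t(2N)^{3\alpha-1}$, and the limiting profile \eqref{Eq:ExplicitHydroFlat} equals $1/2$ on the microscopic interval $\bigl(t(2N)^{3\alpha},\,2N-t(2N)^{3\alpha}\bigr)$. The KPZ observation window $[N-R(2N)^{2\alpha},\,N+R(2N)^{2\alpha}]$ is contained in this bulk precisely when $R(2N)^{2\alpha}+t(2N)^{3\alpha}<N$: for $\alpha<1/3$ this holds for every fixed $t\geq 0$, $R>0$ and $N$ large, while for $\alpha=1/3$ it reduces to $t<1/2$, reproducing the threshold $T$ in the statement. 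The hardest part is the replacement lemma itself in this out-of-equilibrium, weakly asymmetric, finite-volume regime: the classical one/two-block entropy estimates must be adapted to a vanishing asymmetry and a non-stationary initial condition, by combining the bulk density information from Theorem \ref{Th:Hydro} with a local-equilibrium argument exploiting the reversibility of $\cL_N$ with respect to $\mu_N$ and the static description from Theorem \ref{Th:Static}.
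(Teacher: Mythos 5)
Your overall framework -- discrete G\"artner--Hopf--Cole transform, mild formulation, tightness, and identification via a martingale problem for the multiplicative SHE -- is the same as the paper's, and your reading of the threshold $T$ from the hydrodynamic limit (the condition $R(2N)^{2\alpha}+t(2N)^{3\alpha}<N$) is exactly the paper's heuristic. However, you misidentify what the genuinely hard step is and propose a method for it that would not work here.

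The crucial error term in the bracket of $M^N$ is not handled by a Boltzmann--Gibbs / one- or two-block replacement lemma. After the Hopf--Cole transform, the bracket (see \eqref{Eq:Bracket}) decomposes into a dominant term $2\lambda_N\,\xi^N(t,k)^2$, which is the one that survives in the limit, and two remainder terms $\lambda_N\,\xi^N\Delta\xi^N$ and $(2N)^{4\alpha}\nabla^+\xi^N\,\nabla^-\xi^N$. The first remainder is handled by the crude pointwise bound $|\Delta\xi^N|\lesssim\gamma_N\xi^N$. The second remainder is the hard one, and it is controlled by Proposition \ref{Prop:DelicateKPZ}, the analogue of Lemma~4.8 in Bertini--Giacomin: one writes a self-referential integral inequality for $f^N_s(t,\ell)=\E\big[\big|\E[\nabla^+\xi^N\nabla^-\xi^N\,|\,\cF_s]\big|\big]$ in terms of the discrete heat kernel $K^N$, iterates it, and uses sharp discrete heat kernel estimates (Lemmas \ref{Lemma:HeatKernel}, \ref{Lemma:ExpoDecay}, \ref{Lemma:DecaySeriesKernel}, \ref{Lemma:BoundK}) together with the moment bounds of Proposition \ref{Prop:BoundMomentsKPZ}. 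This is a purely analytic/martingale argument and it is precisely what makes the Hopf--Cole route succeed \emph{out of equilibrium}: no entropy inequality, no one/two-block estimate, no local-equilibrium hypothesis is invoked at this stage. The replacement lemma (Theorem \ref{Th:Replacement}) is used only for the hydrodynamic limit in Section~\ref{Section:CVEq}, and it there requires product (Bernoulli) initial data and a stationary coupling $\zeta^N$; neither ingredient is available starting from the flat configuration on the time-scale $(2N)^{4\alpha}$. Likewise, your suggestion to exploit reversibility with respect to $\mu_N$ and Theorem \ref{Th:Static} does not apply: the process is very far from $\mu_N$ for all $t<T$ (indeed the whole point of the window $B^N_\epsilon(t)$ is that the density there is close to $1/2$, not to the equilibrium profile), so no relative-entropy or stationary-measure argument is at your disposal.

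A second, more structural gap is that you do not account for the finite-volume Dirichlet boundary conditions. The transformed process solves \eqref{Eq:SDEKPZ} with $\xi^N(t,0)=\xi^N(t,2N)=e^{\lambda_N t}$, so the Duhamel representation \eqref{Eq:DiscreteHC} carries an inhomogeneous boundary term $I^N$ which grows exponentially in time near the ends. The paper has to introduce the comparison function $b^N(t,\ell)=2+\exp(\lambda_N t-\gamma_N(\ell\wedge(2N-\ell)))$ and prove, via the maximum principle and heat-kernel tail estimates (\eqref{Eq:BoundTailDiscreteKernel}, Lemma \ref{Lemma:ExpoDecay}), that all moments of $\xi^N/b^N$ stay bounded and that the effect of the boundary is exponentially small inside $B^N_\epsilon(t)$. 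That analysis -- not the replacement step -- is where the specific difficulty (and the appearance of $T$ as a deterministic cut-off when $\alpha=1/3$) actually enters the argument, and it is absent from your outline.
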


Let us comment on this result. First of all, the restriction $\alpha \leq 1/3$ is a consequence of the finite-time convergence to equilibrium of the hydrodynamic limit. Indeed, the system takes a time of order $N^{1+\alpha}$ to reach the stationary state. This stationary state is reversible, and therefore, the irreversible KPZ fluctuations cannot be observed therein (this is quite different from the picture arising on the whole line $\Z$ where KPZ fluctuations occur under the invariant measure given by a product of Bernoulli measures). Since the KPZ fluctuations evolve on the time-scale $N^{4\alpha}$, one needs to restrict to $4\alpha \leq 1+\alpha$ in order for the fluctuations not to go faster than the hydrodynamic limit. This explains the restriction $\alpha \leq 1/3$.

For $\alpha = 1/3$, $T$ is the time needed by the hydrodynamic limit to reach the stationary state. Indeed, in that case the time-scale of the hydrodynamic limit coincides with the time-scale of the KPZ fluctuations. Although one could have thought that the fluctuations continuously vanish as $t\uparrow T$, our result show that they don't: the limiting fluctuations are given by the solution of the KPZ equation, restricted to the time interval $[0,T)$. This means that the fluctuations suddenly vanish at time $T$; let us give a simple explanation for this phenomenon. At any time $t\in [0,T)$, the particle system is split into three zones: a high density zone $\{1,\ldots,\frac{\lambda_N}{\gamma_N} t\}$, a low density zone $\{2N- \frac{\lambda_N}{\gamma_N} t, \ldots,2N\}$ and, in between, the bulk where the density of particles is approximately $1/2$, we refer to Figure \ref{FigKPZ}. The KPZ fluctuations occur in a window of order $N^{2\alpha}$ around the middle point of the bulk: from the point of view of this window, the boundaries of the bulk are ``at infinity" but move ``at infinite speed". Therefore, inside this window the system does not feel the effect of the boundary conditions until the very final time $T$ when the boundaries of the bulk merge.

\medskip

Let us recall that the KPZ equation is a singular SPDE: indeed, the solution of the linearised equation is not differentiable in space so that the non-linear term would involve the square of a distribution. While it was introduced in the physics literature~\cite{KPZ86} by Kardar, Parisi and Zhang, a first rigorous definition was given by Bertini and Giacomin~\cite{BG97} through the so-called Hopf-Cole transform $h\mapsto \xi = e^{-2 h}$ that maps formally the equation (\ref{KPZ}) onto
\begin{equ}\label{mSHE}
\begin{cases}
	\partial_t \xi = \frac{1}{2} \partial^2_x \xi +  2 \xi\dot{W}\;,\quad x\in\R\;,\quad t > 0\;,\\
	\xi(0,x)=1\;.
\end{cases}
\end{equ}
This SPDE is usually referred to as the multiplicative stochastic heat equation: it admits a notion of solution via It\^o integration. M\"uller~\cite{Mueller} showed that the solution is strictly positive at all times, if the initial condition is non-negative and non-zero. Then, one can define the solution of (\ref{KPZ}) to be $h:=-(\log \xi) / 2$. This is the notion of solution that we consider in Theorem \ref{Th:KPZ}.\\
There exists a more direct definition of this SPDE (restricted to a bounded domain) due to Hairer~\cite{HairerKPZ, HairerReg} via his theory of regularity structures. Let us also mention the notion of ``energy solution" introduced by Gon\c{c}alves and Jara~\cite{GJEnergy}, for which uniqueness has been proved by Gubinelli and Perkowski~\cite{GubPerEnergy}. It provides a new framework for characterising the solution to the KPZ equation but it requires the equation to be taken under its stationary measure.\\
For related convergence results towards KPZ, we refer to Amir, Corwin and Quastel~\cite{ACQ}, Dembo and Tsai~\cite{DemboTsai}, Corwin and Tsai~\cite{CT} and Corwin, Shen and Tsai~\cite{CST}. We also point out the reviews of Corwin~\cite{Corwin}, Quastel~\cite{QuastelKPZ} and Spohn~\cite{Spohn}.

\bigskip

The paper is organised as follows. In Section \ref{Section:CVEq}, we derive the hydrodynamic limit and in Section \ref{Section:KPZ} we prove the convergence of the fluctuations to the KPZ equation. Some technical proofs are postponed to the Appendix. Let us mention that we do not provide here the proofs of a few intermediate lemmas as they are classical in the literature: the one and two blocks estimates in Section \ref{Section:CVEq} and the bounds on the moments of space-time increments in Section \ref{Section:KPZ}. However, all the details on these lemmas can be found in the companion paper~\cite{LabbeReview}.

\paragraph{Acknowledgements.}
I am indebted to Reda Chhaibi for some deep discussions on this work at an early stage of the project. I would also like to thank Christophe Bahadoran for a fruitful discussion on the notion of entropy solutions for the inviscid Burgers equation, Nikos Zygouras for pointing out the article~\cite{DobrushinHryniv} and Julien Reygner for his helpful comments on a preliminary version of the paper. Finally, I am thankful to the anonymous referee for his/her comments that helped to improve the presentation of this article.

\section{Hydrodynamic limit}\label{Section:CVEq}

In this section, we work at the level of the sped up particle system $\eta^N_t,t\geq 0$ where
\begin{equ}
\eta^N_t(k) = \frac{1+X(t(2N)^{1+\alpha},k)}{2}\;,\qquad X(t,k) = S(t,k)-S(t,k-1)\;,
\end{equ}
for $k\in\{1,\ldots,2N\}$. Notice that our dynamics still makes sense when the total number of particles is any integer between $0$ and $2N$. We let $\tau_k$ denote the shift by $k\in\Z$, namely
\begin{equ}
\tau_k \eta := \big(\eta(k+1),\eta(k+2),\ldots,\eta(k-1),\eta(k)\big)\;,
\end{equ}
where indices are taken modulo $2N$. We let $T_\ell(i):=\{i-\ell,i-\ell+1,\ldots,i+\ell\}$ be the box of size $2\ell+1$ around site $i$, and for any sequence $a(k),k\in\Z$, we define its average over $T_\ell(i)$ as follows:
\begin{equ}
\ccM_{T_\ell(i)} a := \frac1{2\ell+1}\sum_{k=i-\ell}^{i+\ell} a(k)\;.
\end{equ}

Let us now present the notion of solution that we consider for the Burgers equation with zero-flux boundary condition. This material is taken from~\cite[Def.4]{BFK07}.

\begin{definition}\label{Def:EntropySolutionZeroFlux}
Let $\eta_0\in L^\infty(0,1)$. We say that $\eta\in L^\infty\big((0,\infty)\times (0,1)\big)$ is an entropy solution of (\ref{PDEBurgersDensity}) if:\begin{enumerate}
\item For all $c\in[0,1]$ and all $\varphi\in\cC^\infty_c\big((0,\infty)\times(0,1),\R_+\big)$, we have
\begin{equs}
{}\int_0^\infty \int_0^1 \Big(&\big|\eta(t,x)-c\big|\partial_t \varphi(t,x)- 2\sgn(\eta(t,x)-c)\\
&\times\big((\eta(t,x)(1-\eta(t,x)) - c(1-c)\big)\partial_x\varphi(t,x)\Big) dx\,dt \geq 0\;,
\end{equs}
\item We have $\esslim_{t\downarrow 0} \int_0^1 \big| \eta(t,x)-\eta_0(x)\big| dx = 0$,
\item We have $\eta(t,x)(1-\eta(t,x)) = 0$ for almost all $t>0$ and all $x\in\{0,1\}$.
\end{enumerate}
\end{definition}
Let us mention that the first condition is sufficient to ensure that $\eta$ has a trace at the boundaries so that the third condition is meaningful. B\"urger, Frid and Karlsen~\cite[Sect. 4 and 5]{BFK07} show existence and uniqueness of entropy solutions with zero-flux boundary condition.

Let us now introduce the Burgers equation with some appropriate Dirichlet boundary conditions:
\begin{equ}\label{PDEBurgersDirichlet}
	\begin{cases}\partial_t \eta = 2\partial_x\big(\eta(1-\eta)\big) \;,\\
	\eta(t,0) = 1\;,\quad \eta(t,1)=0\;,\\
	\eta(0,\cdot) = \eta_0(\cdot)\;.\end{cases}
\end{equ}
The precise definition of the entropy solution of (\ref{PDEBurgersDirichlet}) is the following.
\begin{definition}\label{Def:EntropySolutionDirichlet}
Let $\eta_0\in L^\infty(0,1)$. We say that $\eta\in L^\infty\big((0,\infty)\times (0,1)\big)$ is an entropy solution of (\ref{PDEBurgersDirichlet}) if it satisfies conditions 1. and 2. from Definition \ref{Def:EntropySolutionZeroFlux} together with the so-called BLN conditions
\begin{equs}[Eq:BLN]
\sgn(\eta(t,0)-1)\big(\eta(t,0)(1-\eta(t,0)) - c(1-c)\big) &\geq 0\;,\quad \forall c\in [\eta(t,0),1]\;,\\
\sgn(\eta(t,1)-0)\big(\eta(t,1)(1-\eta(t,1)) - c(1-c)\big) &\leq 0\;,\quad \forall c\in [0,\eta(t,1)]\;,
\end{equs}
for almost all $t>0$.
\end{definition}

Here again, there is existence and uniqueness of entropy solutions of (\ref{PDEBurgersDirichlet}), see for instance~\cite[Sect. 2.7 and 2.8]{Ruzicka}.

\begin{proposition}\label{Prop:PDE}
The entropy solutions of (\ref{PDEBurgersDensity}) and (\ref{PDEBurgersDirichlet}) coincide.
\end{proposition}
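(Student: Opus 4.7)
The plan is to observe that Definitions \ref{Def:EntropySolutionZeroFlux} and \ref{Def:EntropySolutionDirichlet} share conditions 1 (the interior Kruzhkov-type inequality) and 2 (the $L^1$ initial trace), so the two notions differ only in the boundary conditions they impose. Since uniqueness is available in both settings (the zero-flux case by \cite{BFK07}, the Dirichlet case by \cite{Ruzicka}), it will suffice to show that condition 3 of Definition \ref{Def:EntropySolutionZeroFlux} is equivalent, as a pointwise constraint on the boundary traces $\eta(t,0)$ and $\eta(t,1)$, to the pair of BLN conditions \eqref{Eq:BLN}. Such traces exist in both formulations by the interior entropy inequality, as already noted after Definition \ref{Def:EntropySolutionZeroFlux}.

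The main step is then a direct case analysis of \eqref{Eq:BLN}. At $x=0$ with prescribed value $1$, writing $\alpha := \eta(t,0)$, the inequality reads $\sgn(\alpha-1)\bigl(\alpha(1-\alpha) - c(1-c)\bigr) \geq 0$ for every $c \in [\alpha,1]$. The case $\alpha=1$ is vacuous, the case $\alpha=0$ reduces to $c(1-c) \geq 0$ on $[0,1]$ and is trivially satisfied, while for any $\alpha \in (0,1)$ the inequality rewrites as $c(1-c) \geq \alpha(1-\alpha)$ and is violated at $c=1$. Hence BLN at $x=0$ is equivalent to $\alpha(1-\alpha)=0$. A symmetric analysis at $x=1$ — writing $\beta := \eta(t,1)$ and studying $\sgn(\beta)\bigl(\beta(1-\beta) - c(1-c)\bigr) \leq 0$ on $[0,\beta]$, where now $\beta=0$ is vacuous, $\beta=1$ reduces to $-c(1-c)\leq 0$, and $\beta\in(0,1)$ fails at $c=0$ — yields the same dichotomy $\beta(1-\beta)=0$. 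These two dichotomies together form exactly condition 3 of Definition \ref{Def:EntropySolutionZeroFlux}.

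With the boundary conditions identified, any entropy solution in one sense is an entropy solution in the other, and uniqueness closes the argument. I do not expect a serious obstacle: once one recognizes that only the boundary conditions distinguish the two definitions, the rest is elementary pointwise algebra. The only delicate point is keeping the two sign conventions straight — the $\geq$ inequality at $x=0$ with test interval $[\alpha,1]$ versus the $\leq$ inequality at $x=1$ with test interval $[0,\beta]$ — so that the failing values of $c$ (namely $c=1$ and $c=0$ respectively) are chosen correctly in the elimination of interior trace values.
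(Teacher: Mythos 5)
Your proposal is correct and rests on the same key observation as the paper's proof, namely that the BLN inequalities force the boundary traces $\eta(t,0)$ and $\eta(t,1)$ into $\{0,1\}$, which is exactly the zero-flux condition. The paper argues one direction (Dirichlet entropy solution satisfies the zero-flux definition) and then invokes uniqueness of the zero-flux solution, while you establish the full equivalence of the boundary conditions via the case analysis $\alpha\in\{0\},\{1\},(0,1)$ and the symmetric analysis at $x=1$; this is a slightly more complete version of the same elementary computation, and your final appeal to uniqueness is then in fact redundant (once the two definitions are shown to impose identical constraints, the solution sets coincide outright).
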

\begin{proof}
Both solutions exist and are unique. Let us show that the solution of (\ref{PDEBurgersDirichlet}) satisfies the conditions of Definition \ref{Def:EntropySolutionZeroFlux}: actually, the two first conditions are automatically satisfied, so we focus on the third one. The BLN conditions above immediately imply that $\eta(t,0)$ and $\eta(t,1)$ are necessarily in $\{0,1\}$ for almost all $t>0$, so that the third condition is satisfied.
\end{proof}

As a consequence, we can choose the formulation (\ref{PDEBurgersDirichlet}) in the proof of our convergence result. Let us finally collect some properties of the solutions that we will use later on.

\begin{proposition}\label{Prop:EntropySolution}
Let $\eta_0\in L^\infty(0,1)$. A function $\eta\in L^\infty\big((0,\infty)\times (0,1)\big)$ is the entropy solution of (\ref{PDEBurgersDirichlet}) if and only if for all $c\in[0,1]$ and all $\varphi\in\cC^\infty_c\big([0,\infty)\times[0,1],\R_+\big)$ we have
\begin{equs}[Eq:EntropyCond]
{}&\int_0^\infty \int_0^1 \big((\eta(t,x)-c)^{\pm}\partial_t \varphi(t,x) + h^{\pm}(\eta(t,x),c) \partial_x\varphi(t,x)\big) dx\,dt\\
&+ \int_0^1 (\eta_0(x)-c)^\pm \varphi(0,x) dx+ 2 \int_0^\infty \Big((1-c)^\pm \varphi(t,0) + (0-c)^\pm \varphi(t,1)\Big) dt \geq 0\;,
\end{equs}
where $(x)^\pm$ denotes the positive/negative part of $x\in\R$, $\sgn^\pm(x)=\pm\tun_{(0,\infty)}(\pm x)$ and $h^\pm(\eta,c):=-2 \sgn^\pm(\eta-c) \big(\eta(1-\eta) - c(1-c)\big)$.\\
Furthermore for any $t>0$, the map $\eta_0\mapsto \eta(t)$ is $1$-Lipschitz in $L^1(0,1)$.
\end{proposition}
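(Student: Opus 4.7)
The plan is to apply Carrillo's semi-Kruzhkov framework together with Kruzhkov's doubling-of-variables method. Since the flux $f(\eta)=2\eta(1-\eta)$ is smooth and strictly concave (hence genuinely nonlinear), every $L^\infty$ entropy solution admits strong $L^1$ boundary traces $\eta(\cdot,0^+)$ and $\eta(\cdot,1^-)$ by Vasseur's trace theorem; I will use these freely below.

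For the equivalence, first consider the direction \eqref{Eq:EntropyCond} $\Rightarrow$ Definition \ref{Def:EntropySolutionDirichlet}. Testing \eqref{Eq:EntropyCond} against $\varphi\in\cC^\infty_c\big((0,\infty)\times(0,1)\big)$ and adding the $+$ and $-$ versions recovers the interior Kruzhkov inequality of condition 1, while taking $\varphi(t,x)=\psi_\epsilon(t)\chi(x)$ with $\psi_\epsilon$ concentrated near $t=0$ yields the initial trace (condition 2). To obtain the BLN condition at $x=0$, I would take $\varphi(t,x)=\psi(t)(1-x/\delta)_+$ with $\psi\geq 0$ smooth and let $\delta\downarrow 0$: using the existence of the trace, the interior contribution becomes $-\int\psi(t)h^+(\eta(t,0^+),c)\, dt$, so that the $+$-inequality reads $2(1-c)^+ \geq h^+(\eta(t,0^+),c)$, and similarly for the $-$-version. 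A short case analysis on $c\gtrless \eta(t,0^+)$ turns these pointwise inequalities into precisely the first half of \eqref{Eq:BLN}; a symmetric argument at $x=1$ gives the second half. For the converse, the Kruzhkov inequality on $(0,\infty)\times(0,1)$ combined with Carrillo's semi-entropy trick yields the $\pm$-inequalities on that open set; allowing $\varphi$ to be nonzero on the lateral boundary produces extra boundary terms that, thanks to \eqref{Eq:BLN}, are exactly compensated by the contributions $2\int (1-c)^+\varphi(t,0)\, dt$ and $2\int (0-c)^-\varphi(t,1)\, dt$.

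For the $L^1$-contractivity, I would use Kruzhkov's doubling of variables: given solutions $\eta,\tilde\eta$ with initial data $\eta_0,\tilde\eta_0$, test the $+$ and $-$ versions of \eqref{Eq:EntropyCond} for $\eta$ with $c=\tilde\eta(s,y)$, and for $\tilde\eta$ with $c=\eta(t,x)$, against a product test function $\psi\big(\tfrac{t+s}{2},\tfrac{x+y}{2}\big)\rho_\epsilon(t-s)\rho_\epsilon(x-y)$ with $\rho_\epsilon$ a standard mollifier. Summing the four inequalities and passing to the limit $\epsilon\to 0$ yields a Kato-type inequality of the form
\begin{equs}
{}&\int_0^\infty\int_0^1 \big(|\eta-\tilde\eta|\partial_t\psi + H(\eta,\tilde\eta)\partial_x\psi\big)\, dx\,dt\\
&\qquad + \int_0^1 |\eta_0-\tilde\eta_0|\psi(0,x)\, dx \geq \cB\;,
\end{equs}
where $H(\eta,\tilde\eta)=-2\sgn(\eta-\tilde\eta)\big(\eta(1-\eta)-\tilde\eta(1-\tilde\eta)\big)$, and $\cB$ is a boundary term; the BLN conditions for both $\eta$ and $\tilde\eta$ force the traces into $\{0,1\}$ in a compatible manner that makes $\cB\leq 0$. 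Specialising $\psi(t,x)=\zeta_\epsilon(t)$ with $\zeta_\epsilon$ approximating $\mathbf{1}_{[0,T]}$ and sending $\epsilon\to 0$ then yields the 1-Lipschitz estimate $\|\eta(T)-\tilde\eta(T)\|_{L^1}\leq \|\eta_0-\tilde\eta_0\|_{L^1}$.

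The main obstacle is the rigorous treatment of boundary traces: $\eta$ is only $L^\infty$, so $\eta(\cdot,0^+)$ does not make sense pointwise, and every boundary manipulation above relies on Vasseur's $L^1$-trace theorem for genuinely nonlinear fluxes. If one wishes to avoid that heavy input, the whole argument can be carried out first for vanishing-viscosity approximations (whose traces are classical) and then passed to the $L^1$-limit using the stability just established, thereby bootstrapping the proof.
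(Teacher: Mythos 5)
Your proposal takes a genuinely different route from the paper. The paper's proof is essentially a citation chain: it identifies \eqref{Eq:EntropyCond} with the semi-Kruzhkov formulation introduced in~\cite{Vovelle}, cites~\cite{Vovelle} for its equivalence with Otto's formulation via boundary entropy--entropy flux pairs, cites~\cite{Ruzicka} for the latter's equivalence with the BLN formulation of Definition~\ref{Def:EntropySolutionDirichlet}, and obtains the $L^1$-contractivity by citing~\cite{BFK07} for the zero-flux problem and transferring it through Proposition~\ref{Prop:PDE}. You instead propose a self-contained argument built on Vasseur's strong $L^1$-trace theorem (legitimate here because the flux is strictly concave, hence genuinely nonlinear) together with Kruzhkov's doubling of variables. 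The trade-off is clear: your route bypasses Otto's abstract boundary entropy machinery entirely, at the cost of invoking the strong trace theorem, which the references used by the paper avoid by working with weak boundary entropies directly.

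Several points in your sketch would need firming up to become a proof. The test functions $\psi(t)(1-x/\delta)_+$ are only Lipschitz, not in $\cC^\infty_c$; they must be mollified before being fed into \eqref{Eq:EntropyCond}. The passage from the limiting boundary inequality $2(1-c)^\pm \geq h^\pm(\eta(t,0^+),c)$ to the BLN condition is not a direct rewriting but a short case analysis: at $x=0$ the $+$-inequality is in fact vacuous (it reduces to $(1-c)^2+\eta_0(1-\eta_0)\ge 0$), and it is the $-$-inequality (using $(1-c)^-=0$ for $c\in[0,1]$, then sending $c\uparrow 1$) that forces the trace $\eta(t,0^+)$ into $\{0,1\}$, which is exactly what BLN with datum $1$ reduces to for this quadratic flux, as recorded in the proof of Proposition~\ref{Prop:PDE}; the argument at $x=1$ is symmetric. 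Finally there is a sign slip in your Kato inequality: with $\cB$ placed on the right-hand side as you wrote it, deducing $\|\eta(T)-\tilde\eta(T)\|_{L^1}\le\|\eta_0-\tilde\eta_0\|_{L^1}$ requires $\cB\ge 0$, not $\cB\le 0$. Fortunately, once both BLN conditions force the boundary traces into $\{0,1\}$, the Kruzhkov flux $H$ vanishes identically there, so $\cB=0$ and the slip is harmless; but as stated the step does not close. With those repairs the argument is sound and would give an alternative, reference-free proof.
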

\begin{proof}
The notion of solution defined by (\ref{Eq:EntropyCond}) is introduced in~\cite[Def. 1]{Vovelle} and it is shown therein that it coincides with another notion of solution, originally due to Otto, which is based on boundary entropy-entropy flux pairs. It is then shown in~\cite[Th 7.31]{Ruzicka} that the latter notion of solution is equivalent with the notion of solution of Definition \ref{Def:EntropySolutionDirichlet}. This completes the proof of the first part of the statement. The Lipschitz continuity in $L^1(0,1)$ is proved in~\cite[Th. 3]{BFK07} for the Burgers equation with zero-flux boundary conditions. By Proposition \ref{Prop:PDE}, it also holds for (\ref{PDEBurgersDirichlet}), thus concluding the proof.
\end{proof}

\subsection{Proof of the convergence}\label{SectionHyperbo}

We let $\cM$ be the space of measures on $[0,1]$ with total mass at most $1$, endowed with the topology of weak convergence. Recall the process $\rho^N$ defined in (\ref{Eq:DefrhoN}).

\begin{proposition}\label{Prop:TightnessHyperbo}
Let $\iota_N$ be any probability measure on $\{0,1\}^{2N}$. The sequence of processes $(\rho^N_t, t\geq 0)$, starting from $\iota_N$, is tight in the space $\bbD([0,\infty),\cM)$. Furthermore, the associated sequence of processes $(m^N(t,x),t\geq 0,x\in[0,1])$ is tight in $\bbD([0,\infty),\cC([0,1]))$.
\end{proposition}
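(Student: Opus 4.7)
The plan is to establish tightness of the measure-valued process $\rho^N$ first, and then to deduce the tightness of the height process $m^N$ by exploiting its uniform Lipschitz regularity in space, together with the exact identity $m^N(t,k/(2N))=2\rho^N_t([0,k/(2N)])-k/(2N)$ at integer sites.

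Step~1 (tightness of $\rho^N$). Since $\cM$ is compact in the weak topology, the compact containment condition holds automatically. By separation of points, Jakubowski's criterion reduces the problem to the tightness of $\langle\rho^N_\cdot,\varphi\rangle$ in $\bbD([0,\infty),\R)$ for every $\varphi\in\cC^\infty([0,1])$. I would use Dynkin's martingale decomposition
\begin{equ}
\langle\rho^N_t,\varphi\rangle=\langle\rho^N_0,\varphi\rangle+\int_0^t (2N)^{1+\alpha}\cL_N\langle\rho^N_s,\varphi\rangle\,ds+M^N_t(\varphi)\;,
\end{equ}
together with the explicit splitting of the instantaneous current as $j_{k,k+1}=\tfrac12(\eta(k+1)-\eta(k))+(p_N-\tfrac12)(\eta(k)+\eta(k+1)-2\eta(k)\eta(k+1))$. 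A discrete summation by parts on the symmetric contribution leaves a residual of size $O((2N)^{\alpha-1}\|\varphi''\|_\infty)$, while the asymmetric part is uniformly bounded by $C\|\varphi'\|_\infty$ since $(2N)^\alpha(p_N-\tfrac12)$ is bounded. A parallel computation for the carr\'e du champ yields $\langle M^N(\varphi)\rangle_T\leq CT\|\varphi'\|_\infty^2(2N)^{\alpha-2}$, which vanishes as $N\to\infty$. Aldous' criterion for real-valued cadlag processes follows at once.

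Step~2 (tightness of $m^N$). The uniform $1$-Lipschitz property of $m^N(t,\cdot)$, together with the boundary constraints $m^N(t,0)=m^N(t,1)=0$, confines the orbit to a fixed compact subset of $\cC([0,1])$ (by Arzel\`a--Ascoli), so compact containment is automatic. For Aldous' criterion, fix stopping times $\tau_N\leq T$, a deterministic sequence $\delta_N\to 0$, and $\varepsilon>0$; on the $\varepsilon$-net $\{y_j:=j\varepsilon\}\cap[0,1]$ the Lipschitz property gives
\begin{equ}
\|m^N(\tau_N+\delta_N)-m^N(\tau_N)\|_\infty\leq 2\varepsilon+\max_j|m^N(\tau_N+\delta_N,y_j)-m^N(\tau_N,y_j)|\;.
\end{equ}
For each $y_j$, I would approximate $\tun_{[0,y_j]}$ by a smooth function $\varphi_{j,\varepsilon'}$ which agrees with it outside $(y_j-\varepsilon',y_j+\varepsilon')$; since $\rho^N_t$ assigns mass at most $2\varepsilon'+1/(2N)$ to any interval of length $2\varepsilon'$, the approximation error in $\langle\rho^N_t,\varphi_{j,\varepsilon'}\rangle$ is controlled. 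Combining this with the affine identity above reduces the problem to the control of finitely many real-valued increments $|\langle\rho^N_{\tau_N+\delta_N},\varphi_{j,\varepsilon'}\rangle-\langle\rho^N_{\tau_N},\varphi_{j,\varepsilon'}\rangle|$, each of which tends to $0$ in probability by Step~1; letting $\varepsilon,\varepsilon'\to 0$ closes the argument.

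The main obstacle is this second step. A direct attempt to control $\|m^N(t)-m^N(s)\|_\infty$ via pointwise bounds on the edge currents $J^{(k,k+1)}$ fails: the expected net current across any single edge over a time window of length $\delta_N$ can be as large as $O(\delta_N\cdot N^\alpha)$, which does not tend to $0$ unless $\delta_N\ll N^{-\alpha}$, whereas Aldous' criterion allows $\delta_N$ to vanish arbitrarily slowly. The reduction to finitely many smooth test functions, made possible by the uniform Lipschitz regularity of $m^N(t,\cdot)$ in space, is the crucial device that converts the tightness established at the level of $\rho^N$ into uniform tightness of the height process.
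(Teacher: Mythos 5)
Your proposal is correct, and it uses the same two structural ingredients as the paper's proof (Dynkin decomposition with a symmetric/antisymmetric split for $\rho^N$; spatial Lipschitz regularity plus mollified test functions for $m^N$), but it executes the passage to tightness differently in both steps. For $\rho^N$, the paper establishes a uniform modulus of continuity, bounding $\E[\sup_{s,t\le T,\,|t-s|\le h}|\langle\rho^N_t-\rho^N_s,\varphi\rangle|]$ and then letting $h\downarrow 0$, whereas you reduce to Aldous' criterion via separation by test functions; the martingale-bracket and drift estimates are identical. For $m^N$, the paper proves a pointwise-in-space moment bound $\E[|m^N(t,k)-m^N(s,k)|^p]^{1/p}\lesssim |t-s|+N^{-(1-\beta)}$ by averaging $2\eta-1$ against a cutoff at a scale $N^{\beta-1}$ (macroscopically) that shrinks with $N$, and then invokes a classical piecewise-linear interpolation argument. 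You instead apply Aldous' criterion directly: the $1$-Lipschitz bound reduces the supremum to an $\varepsilon$-net of spatial points, each compared with a fixed smooth approximation $\varphi_{j,\varepsilon'}$ of the indicator; the observation that $\rho^N_t$ assigns mass at most $2\varepsilon'+1/(2N)$ to any interval of length $2\varepsilon'$ makes the approximation error uniform, and the finitely many increments $|\langle\rho^N_{\tau_N+\delta_N}-\rho^N_{\tau_N},\varphi_{j,\varepsilon'}\rangle|$ go to $0$ in probability by Step~1. Your version mollifies at a fixed macroscopic scale and sends it to zero only at the end, which is cleaner for bare tightness; the paper's $N$-dependent mollification buys a quantitative moment bound. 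Both are valid. One small imprecision: $m^N(t,1)=0$ holds only when the particle number is exactly $N$, which need not hold for a generic $\iota_N$ (the paper flags this explicitly); this is harmless since $m^N(t,0)=0$ always and the Lipschitz bound alone gives compact containment.
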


\noindent Note that for a generic measure $\iota_N$ on $\{0,1\}^{2N}$, $m^N(t,1)$ is not necessarily equal to $0$.

\begin{proof}
Let $\varphi\in\cC^2([0,1])$. It suffices to show that $\langle \rho^N_0, \varphi\rangle$ is tight in $\R$, and that for all $T>0$
\begin{equ}\label{Eq:TightnessHydro}
\varlimsup_{h\downarrow 0} \varlimsup_{N\rightarrow\infty} \E^N_{\iota_N}\Big[ \sup_{s,t\leq T, |t-s|\leq h}| \langle \rho^N_t-\rho^N_s, \varphi\rangle |\Big]=0\;.
\end{equ}
The former is immediate since $|\langle \rho^N_0, \varphi\rangle| \leq \|\varphi\|_\infty$. Regarding the latter, we let $L^N$ be the generator of our sped-up process and we write
\begin{equs}
\langle \rho^N_t-\rho^N_s, \varphi\rangle = \frac1{2N} \int_s^t \sum_{k=1}^{2N} \varphi(k) L^N \eta_r^N(k) dr + M^N_{s,t}(\varphi)\;,
\end{equs}
where $M^N_{s,t}(\varphi)$ is a martingale. Its bracket can be bounded almost surely as follows
\begin{equ}
\langle M^N_{s,\cdot}(\varphi) \rangle_t \leq \int_s^t \frac1{(2N)^2} \sum_{k=1}^{2N-1}\big(\nabla\varphi(k)\big)^2 (2N)^{1+\alpha} dr \lesssim \frac{t-s}{(2N)^{2-\alpha}}\;.
\end{equ}
Since the jumps of this martingale are bounded by a term of order $\|\varphi'\|_\infty / (2N)^2$, and since
$$\E^N_{\iota_N}\Big[ \sup_{t\in [s,s+h]\cap [0,T]}|M^N_{s,t}(\varphi)|\Big]\leq \E^N_{\iota_N}\Big[ \sup_{t\in [s,s+h]\cap [0,T]}|M^N_{s,t}(\varphi)|^2\Big]^\frac12\;,$$
the BDG inequality (\ref{Eq:BDG3}) ensures that we have
\begin{equ}\label{Eq:BdSupMgale}
\varlimsup_{N\rightarrow\infty} \sup_{s\in [0,T]} \E^N_{\iota_N}\Big[ \sup_{t\in [s,s+h]\cap [0,T]}|M^N_{s,t}(\varphi)|\Big]= 0\;.
\end{equ}
This being given, we observe that $M^N_{s,t}(\varphi)=M^N_{r,t}(\varphi) - M^N_{r,s}(\varphi)$ where $r$ is taken to be the largest element in $\{0,h,2h,\ldots, \lfloor\frac{T}{h}\rfloor h\}$ which is below $s$. Therefore, we have
\begin{equs}
\E^N_{\iota_N}\Big[ \sup_{0\le s\le t\le T, |t-s|\leq h}|M^N_{s,t}(\varphi)|\Big] &\le 2\, \E^N_{\iota_N}\Big[\sum_{r=0,h,\ldots,\lfloor\frac{T}{h}\rfloor h} \sup_{t\in [r,r+2h]\cap [0,T]} |M^N_{r,t}(\varphi)|\Big]\\
&\le 2 \sum_{r=0,h,\ldots,\lfloor\frac{T}{h}\rfloor h} \E^N_{\iota_N}\Big[ \sup_{t\in [r,r+2h]\cap [0,T]} |M^N_{r,t}(\varphi)|\Big]\\
&\le \frac{C}{h} \sup_{r\in[0,T]} \E^N_{\iota_N}\Big[ \sup_{t\in [r,r+2h]\cap [0,T]} |M^N_{r,t}(\varphi)|\Big]\;,
\end{equs}
for some $C>0$. Combining this with \eqref{Eq:BdSupMgale} we deduce that
\begin{equ}\label{Eq:TightnessHydro1}
\varlimsup_{h\downarrow 0} \varlimsup_{N\rightarrow\infty} \E^N_{\iota_N}\Big[ \sup_{0\le s\le t\le T, |t-s|\leq h}|M^N_{s,t}(\varphi)|\Big]=0\;.
\end{equ}
Let us bound the term involving the generator. Decomposing the jump rates into the symmetric part (of intensity $1-p_N$) and the totally asymmetric part (of intensity $2p_N-1$), we find
\begin{equs}
\frac1{2N} \sum_{k=1}^{2N} \varphi(k) L^N \eta(k)&=
-(2N)^\alpha (1-p_N) \sum_{k=1}^{2N-1} \nabla\eta(k) \nabla\varphi(k)\\
&\quad- (2N)^\alpha(2p_N-1) \sum_{k=1}^{2N-1} \eta(k+1)(1-\eta(k)) \nabla \varphi(k)\;.
\end{equs}
A simple integration by parts shows that the first term on the right is bounded by a term of order $N^{\alpha-1}$ while the second term is of order $1$. Consequently
\begin{equ}\label{Eq:TightnessHydro2}
\E^N_{\iota_N}\Big[\sup_{s,t \leq T, |t-s|\leq h} \Big|\frac1{2N} \int_s^t \sum_{k=1}^{2N} \varphi(k) L^N \eta_r(k) dr \Big|\Big] \lesssim h\;,
\end{equ}
uniformly over all $N\geq 1$ and all $h>0$. The l.h.s.~vanishes as $N\rightarrow\infty$ and $h\downarrow 0$. Combining (\ref{Eq:TightnessHydro1}) and (\ref{Eq:TightnessHydro2}), (\ref{Eq:TightnessHydro}) follows.\\
We turn to the tightness of the interface $m^N$. First, the profile $m^N(t,\cdot)$ is $1$-Lipschitz for all $t\geq 0$ and all $N\geq 1$. Second, we claim that for some $\beta\in(\alpha,1)$ 
\begin{equ}\label{Eq:IncrHyperbo}
\E^N_{\iota_N} \Big[ |m^N(t,k)-m^N(s,k)|^p \Big]^{\frac1{p}} \lesssim |t-s| + \frac1{N^{1-\beta}}\;,
\end{equ}
uniformly over all $0 \leq s \leq t \leq T$, all $k\in\{1,\ldots,2N\}$ and all $N\geq 1$. This being given, the arguments for proving tightness are classical: one introduces a piecewise linear time-interpolation $\bar{m}^N$ of $m^N$ and shows tightness for this process, and then one shows that the difference between $\bar{m}^N$ and $m^N$ is uniformly small. We are left with the proof of (\ref{Eq:IncrHyperbo}). Let $\psi:\R\rightarrow\R_+$ be a non-increasing, smooth function such that $\psi(x)=1$ for all $x\leq 0$ and $\psi(x)=0$ for all $x\geq 1$. Fix $\beta \in (\alpha,1)$. For any given $k\in\{1,\ldots,2N\}$, we define $\varphi^N_k:\{0,\ldots,2N\}\rightarrow\R$ by setting $\varphi^N_k(\ell) = \psi\big((\ell-k)/(2N)^\beta\big)$. Then, we observe that
\begin{equ}
\frac1{2N}\sum_{\ell=1}^{2N} \big(2\eta_t(\ell)-1\big) \varphi^N_k(\ell) = m^N(t,k) + \cO(N^{\beta-1})\;,
\end{equ}
uniformly over all $k\in\{1,\ldots,2N\}$ and all $t\geq 0$. Then, similar computations to those made in the first part of the proof show that
\begin{equ}
\E^N_{\iota_N}\Big[\Big|\frac1{2N}\sum_{\ell=1}^{2N}\big(\eta_t(\ell)-\eta_s(\ell)\big) \varphi^N_k(\ell)\Big|^p\Big]^{\frac1{p}} \lesssim (t-s) + \sqrt{\frac{t-s}{N^{1+\beta-\alpha}}} + \frac1{N^{1+\beta}}\;,
\end{equ}
uniformly over all $k$, all $0\leq s \leq t \leq T$ and all $N\geq 1$. This yields (\ref{Eq:IncrHyperbo}).
\end{proof}

The main step in the proof of Theorem \ref{Th:Hydro} is to prove the convergence of the density of particles, starting from a product measure satisfying the following assumption:
\begin{assumption}\label{Assumption:IC}
For all $N\geq 1$, the initial condition $\iota_N$ is a product measure on $\{0,1\}^{2N}$ of the form $\otimes_{k=1}^{2N} \mbox{Be}(f(k/2N))$, where $f:[0,1]\rightarrow[0,1]$ is assumed to be piecewise constant and does not depend on $N$.
\end{assumption}
Under this assumption and if the process starts from $\iota_N$, then $\rho^N_0$ converges to the deterministic limit $\rho_0(dx)=f(x)dx$.

\begin{theorem}\label{Th:HydroProd}
Under Assumption \ref{Assumption:IC}, the process $\rho^N$ converges in distribution in the Skorohod space $\bbD\big([0,\infty),\cM\big)$ to the deterministic process $(\eta(t,x)dx,t\geq 0)$, where $\eta$ is the entropy solution of (\ref{PDEBurgersDirichlet}) starting from $\eta_0=f$.
\end{theorem}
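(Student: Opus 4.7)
The plan is to combine the tightness supplied by Proposition \ref{Prop:TightnessHyperbo} with an identification of sub-sequential limits via the characterisation of entropy solutions in Proposition \ref{Prop:EntropySolution}, following the Rezakhanlou--Bahadoran strategy. Any sub-sequential limit $\rho^\star$ of $(\rho^N)$ is a.s.~absolutely continuous with density $\eta(t,x) \in [0,1]$, because $\rho^N_t$ is dominated by the uniform measure of density one. Under Assumption \ref{Assumption:IC}, the law of large numbers applied to the product measure $\iota_N$ yields $\rho^N_0 \to f(x)\,dx$ in probability, so every sub-sequential limit has initial profile $f$. By Proposition \ref{Prop:EntropySolution} (together with Proposition \ref{Prop:PDE}), it then suffices to show that any limit $\eta$ satisfies the one-sided entropy inequalities (\ref{Eq:EntropyCond}), for both signs, every constant $c \in [0,1]$, and every nonnegative $\varphi \in \cC^\infty_c([0,\infty)\times[0,1],\R_+)$.

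To establish these microscopic entropy inequalities, I would couple $\eta^N$ with a second copy $\zeta^N$ of the same dynamics via the basic (attractive) coupling, initialising $\zeta^N_0$ as a product Bernoulli measure of constant density $c$; then $|\eta^N_t(k) - \zeta^N_t(k)|$ serves as a microscopic Kruzhkov entropy since discrepancies are conserved and only transported by the basic coupling. Dynkin's formula applied to $\frac{1}{2N}\sum_k \varphi(t,k/2N)\,|\eta^N_t(k) - \zeta^N_t(k)|$, followed by summation by parts, produces a drift term whose integrand is a microscopic analogue of the entropy flux $h^+(\eta,c) = -2\sgn^+(\eta-c)(\eta(1-\eta)-c(1-c))$, a non-positive entropy-production contribution, boundary contributions at $k \in \{1,2N\}$, and a martingale remainder controlled via the BDG inequality exactly as in the proof of Proposition \ref{Prop:TightnessHyperbo}. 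The one-block and two-blocks estimates cited from the companion paper \cite{LabbeReview} then allow one to replace the microscopic current $\eta^N(k)(1-\eta^N(k+1))$ by the mesoscopic observable $\bar\eta^N_\ell(k)(1-\bar\eta^N_\ell(k))$ and ultimately by the macroscopic density, and similarly for its coupled analogue. The symmetric part of the generator contributes only a remainder of order $N^{\alpha-1}$, which vanishes and accounts for the limiting PDE being the \emph{inviscid} Burgers equation.

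The treatment of the boundaries is the main novelty with respect to the infinite-volume setting of Rezakhanlou, and it is where the hardest work lies. Because our zero-flux dynamics is not invariant under a product Bernoulli measure, the reference copy $\zeta^N$ is not stationary; one needs to argue that on the hyperbolic time scale $N^{1+\alpha}$ its density stays close to $c$ in any compact subset of $(0,1)$, which should follow from finite-speed-of-propagation estimates for the asymmetric part of the generator. The boundary integrals $2\int_0^\infty (1-c)^{\pm}\varphi(t,0)\,dt$ and $2\int_0^\infty (0-c)^{\pm}\varphi(t,1)\,dt$ in (\ref{Eq:EntropyCond}) are then expected to arise from the heuristic extension of the particle system to $\bbZ$ with sites $\le 0$ fully occupied and sites $\ge 2N+1$ empty: the asymmetric drift pins the densities to these "ghost" values near the boundaries, and the induced boundary fluxes produce precisely the BLN penalties. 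Combining all of this with the uniqueness statement of Proposition \ref{Prop:EntropySolution} identifies every sub-sequential limit as $\eta(t,x)\,dx$ with $\eta$ the entropy solution of (\ref{PDEBurgersDirichlet}) started from $f$, which upgrades to convergence of the full sequence.
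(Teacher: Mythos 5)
Your plan captures the high-level Rezakhanlou strategy, but it has a genuine gap exactly at the step you flag yourself: the construction of the reference process $\zeta^N$ and the origin of the boundary terms in \eqref{Eq:EntropyCond}.

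You propose to let $\zeta^N$ run the \emph{same} zero-flux dynamics as $\eta^N$, accept that it is not stationary under $\mathrm{Be}(c)^{\otimes 2N}$, and then argue via finite speed of propagation that its density stays close to $c$ in compact subsets of $(0,1)$. This does not suffice. First, the entropy inequality \eqref{Eq:EntropyCond} is required for test functions $\varphi \in \cC^\infty_c([0,\infty)\times[0,1])$ that are allowed to be nonzero at $x=0$ and $x=1$, so one cannot restrict attention to compact subsets of the open interval: the derivation has to produce the precise boundary integrals $2\int_0^\infty (1-c)^\pm \varphi(t,0)\,dt + 2\int_0^\infty (0-c)^\pm\varphi(t,1)\,dt$, and nothing in your argument explains how these would emerge from a non-stationary $\zeta^N$ whose density near the endpoints is being dragged away from $c$ by the asymmetric drift. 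Second, the replacement $\ccM_{T_{\epsilon N}} \Phi(\zeta^N_s) \approx \tilde\Phi(\ccM_{T_{\epsilon N}}\zeta^N_s) \approx \tilde\Phi(c)$ used to make the Kruzhkov constant appear relies on $\zeta^N_s$ being \emph{exactly} distributed as a product Bernoulli($c$) measure (and then on the ergodic theorem in the stationary case); this step is not available when $\zeta^N$ is merely ``approximately'' at density $c$.

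The paper's actual fix is different and cleaner: $\zeta^N$ is given a \emph{different} dynamics than $\eta^N$, namely open boundary conditions with the minimal reservoir rates — particles exit at site $1$ at rate $(2N)^{1+\alpha}(2p_N-1)(1-c)\zeta(1)$ and enter at site $2N$ at rate $(2N)^{1+\alpha}(2p_N-1)c(1-\zeta(2N))$ — chosen so that $\mathrm{Be}(c)^{\otimes 2N}$ is \emph{exactly} stationary for $\zeta^N$. Applying $\tilde{\cL}^{\mathrm{bdry}}$ to $\langle\varphi,(\eta-\zeta)^\pm\rangle_N$ then yields the boundary integrals of \eqref{Eq:EntropyCond} directly, up to $\cO(N^{-\alpha})$. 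This construction also requires the additional bookkeeping of Lemma~\ref{Lemma:Coupling} (the number of sign changes of $\eta^N-\zeta^N$ stays bounded by $n(0)+3$, since new sign changes can only be created at the boundaries), which you do not mention but which is essential to control the symmetric part of the generator after integration by parts. Without this structure, your appeal to ``discrepancies are conserved and only transported'' is not quite right: here discrepancies \emph{can} be created or destroyed at the boundary, and the proof hinges on controlling that carefully. Finally, a small point: you should work with the signed parts $(\eta-\zeta)^\pm$ separately, as in \eqref{Eq:EntropyCond}, rather than with $|\eta-\zeta|$, since the two one-sided inequalities (and the corresponding boundary penalties) are distinct.
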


\noindent Given this result, the proof of the hydrodynamic limit is simple.
\begin{proof}[of Theorem \ref{Th:Hydro}]
Let $\iota_N$ be as in Theorem \ref{Th:HydroProd}. We know that $\rho^N$ converges to some limit $\rho$ and that $m^N$ is tight. Let $m$ be some limit point and let $m^{N_i}$ be an associated converging subsequence. By Skorohod's representation theorem, we can assume that $(\rho^{N_i},m^{N_i})$ converges almost surely to $(\rho,m)$. Recall that $\rho$ is of the form $\rho(t,x)=\eta(t,x)dx$. Our first goal is to show that $m(t,x) = \int_0^x (2\eta(t,y) - 1)dy$ for all $t,x$.

Fix $x_0\in (0,1)$. We introduce an approximation of the indicator of $(-\infty,x_0]$ by setting $\varphi_p(\cdot) = 1- \int_{-\infty}^\cdot P_{1/p}(y-x_0) dy$, $p\geq 1$ where $P_t$ is the heat kernel on $\R$ at time $t$. For each $p\geq 1$, $\varphi_p$ is smooth and for any $\delta > 0$ we have
\begin{equ}\label{Eq:phip}
\big\| \varphi_p - \tun_{[0,x_0]} \big\|_{L^1(0,1)} \rightarrow 0\;,\quad \sup_{f\in\cC^\delta([0,1])} \frac{\big| \langle f, \delta_{x_0} + \partial_x \varphi_p \rangle \big|}{\| f \|_{\cC^\delta}} \rightarrow 0\;,
\end{equ}
as $p\rightarrow\infty$. If we set $I(t,x_0)= m(t,x_0) - \int_0^{x_0} (2\eta(t,y) - 1)dy$ for some $x_0\in (0,1)$ and some $t>0$, then $| I(t,x_0) |$ is bounded by
\begin{equs}
{}&\big\| m(t)- m^{N_i}(t)\big\|_\infty+\big| \langle m^{N_i}(t),\delta_{x_0} + \partial_x \varphi_p \rangle \big|+ \big|\langle m^{N_i}(t),\partial_x\varphi_p\rangle+\langle2\rho^{N_i}_t-1,\varphi_p\rangle\big|\\
&+ 2\big| \langle \rho^{N_i}_t-\rho_t,\varphi_p \rangle \big|+ \big| \langle 2\rho(t)-1,\varphi_p - \tun_{[0,x_0]}\rangle\big|\;.
\end{equs}
Recall that $m^N$ is $1$-Lipschitz in space, so that the second term vanishes as $p\rightarrow\infty$ by (\ref{Eq:phip}). A discrete integration by parts shows that the third term vanishes as $N_i$ goes to $\infty$. The first and fourth terms vanish as $N_i\rightarrow\infty$ by the convergence of $m^{N_i}$ and $\rho^{N_i}$, and the last term is dealt with using (\ref{Eq:phip}). Choosing $p$ and then $N_i$ large enough, we deduce that $|I(t,x_0)|$ is almost surely as small as desired. This identifies completely the limit $m$ of any converging subsequence, under $\P^N_{\iota_N}$.

We are left with the extension of this convergence result to an arbitrary initial condition. Assume that $m^N(0,\cdot)$ converges to some profile $m_0(\cdot)$ for the supremum norm. Since each $m^N(0,\cdot)$ is $1$-Lipschitz, so is $m_0$. For all $\epsilon > 0$, one can find two profiles $m_0^{\epsilon,+}$ and $m_0^{\epsilon,-}$ which are $1$-Lipschitz, piecewise affine, start from $0$ and are such that:
\begin{itemize}
\item $m_0^{\epsilon,-}$ stays below $m_0$: $m_0-\epsilon \leq m_0^{\epsilon,-} \leq (m_0-\frac{\epsilon}{4})\vee(-x)\vee(x-1)\;,$
\item $m_0^{\epsilon,+}$ stays above $m_0$: $(m_0+\frac{\epsilon}{4})\wedge x \wedge(1-x) \leq m_0^{\epsilon,+} \leq m_0 + \epsilon\;,$
\item $\| \rho_0^{\epsilon,\pm} - \rho_0\|_{L^1} \rightarrow 0$ as $\epsilon \downarrow 0$, where $\rho_0^{\epsilon,\pm}(\cdot) = \big(\partial_x m_0^{\epsilon,\pm}(\cdot) +1\big)/2$.
\end{itemize}
Let us briefly explain how one can construct $m_0^{\epsilon,-}$, the construction of $m_0^{\epsilon,+}$ being similar. For simplicity, we let $V(x) := (-x)\vee(x-1)$. Let $n\ge 1$ be given. We subdivide $[0,1]$ into three sets:
$$I:=\{x: m_0(x) > V(x) + \epsilon/2\}\;,\quad J_1:= [0,1/2]\backslash I\;,\quad J_2 := (1/2,1] \backslash I\;.$$
On $J_1$ and $J_2$, we set $m_0^{\epsilon,-}(x) = V(x)$. On $I\cap \{k/n: k=0,1,\ldots,n\}$, we set $m_0^{\epsilon,-}(x) =m_0(x) - \frac{\epsilon}{2}$. Then, we extend $m_0^{\epsilon,-}$ to the rest of $I$ by affine interpolation. The fact that $m_0$ is $1$-Lipschitz ensures that $m_0^{\epsilon,-}$ is also $1$-Lipschitz. If $n$ is large enough compared to $1/\epsilon$ we get the inequalities $m_0-\epsilon \leq m_0^{\epsilon,-} \leq (m_0-\frac{\epsilon}{4})\vee(-x)\vee(x-1)$. Regarding the convergence in $L^1$ of the density, we observe that $ \rho^{-,\epsilon}_0(x) = 0$ on $J_1$, $\rho^{-,\epsilon}_0(x) = 1$ on $J_2$ and
$$  \rho^{-,\epsilon}_0(x) = \frac1{|I(x)|} \int_{I(x)} \rho_0(u) du\;,\quad x\in I\;,$$
where $I(x) = I \cap [k/n,(k+1)/n)$ and $k$ is the integer part of $nx$. From there, we deduce that $\| \rho_0^{\epsilon,\pm} - \rho_0\|_{L^1(J_1)}=\int_{J_1} \rho_0(x)dx$. At this point, we decompose $J_1$ into its connected components $J_{1,\ell}$, $\ell=1,\ldots,L$ ranked in the increasing order. Notice that $J_{1,1}$ is an interval that starts at $0$ and $J_{1,L}$ potentially ends at $1/2$ (if $m_0(1/2) \le V(1/2)-\epsilon/2$). On the other hand, at both end points of $J_{1,\ell}$ for $\ell\notin \{1,L\}$, $m_0$ coincides with $V(x)-\epsilon/2$. As a consequence, $\int_{J_{1,\ell}} \rho_0(x)dx = 0$ for all $\ell \notin \{1,L\}$ while $\int_{J_{1,\ell}} \rho_0(x)dx \le \epsilon/4$ for $\ell \in \{1,L\}$. Henceforth $\| \rho_0^{\epsilon,\pm} - \rho_0\|_{L^1(J_1)} \le \epsilon/2$. Similarly, $\| \rho_0^{\epsilon,\pm} - \rho_0\|_{L^1(J_2)}=\int_{J_2} (1-\rho_0(x))dx$ which is also smaller than $\epsilon/2$. Finally, $\| \rho_0^{\epsilon,\pm} - \rho_0\|_{L^1(I)}$ goes to $0$ as $n\rightarrow\infty$: indeed, the almost everywhere differentiability of $x\mapsto \int_0^x \rho_0(u) du$ ensures that $\rho_0^{\epsilon,-}(x)$ goes to $\rho_0(x)$ for almost all $x\in I$, so that the dominated convergence theorem yields the asserted convergence.\\

Now consider a coupling $(m^{N,\epsilon,-},m^N,m^{N,\epsilon,+})$ of three instances of our height process which preserves the order of the interfaces and is such that $m^{N,\epsilon,\pm}(0,\cdot)$ is the height function associated with the particle density distributed as $\otimes_{k=1}^{2N} \mbox{Be}\big(\rho^{\epsilon,\pm}(k/2N)\big)$. We draw independently these two sets of Bernoulli r.v. It is simple to check that the probability of the event $m^{N,\epsilon,-}(0,\cdot) \leq m^N(0,\cdot) \leq m^{N,\epsilon,+}(0,\cdot)$ goes to $1$ as $N\rightarrow\infty$. By the order preserving property of the coupling, if these inequalities are satisfied at time $0$ they remain true at all times. Our convergence result applies to $m^{N,\epsilon,\pm}$ and, consequently, any limit point of the tight sequence $m^N$ is squeezed in $[m^{\epsilon,-},m^{\epsilon,+}]$ where $m^{\epsilon,\pm}$ is the integrated entropy solution of (\ref{PDEBurgersDirichlet}) starting from $m_0^{\epsilon,\pm}$. By the second part of Proposition \ref{Prop:PDE}, we deduce that $m^{\epsilon,\pm}$ converge, as $\epsilon\downarrow 0$, to the integrated entropy solution of (\ref{PDEBurgersDirichlet}) starting from $m_0$, thus concluding the proof. 
\end{proof}

To prove Theorem \ref{Th:HydroProd}, we need to show that the limit of any converging subsequence of $\rho^N$ is of the form $\rho(t,dx)=\eta(t,x)dx$ and that $\eta$ satisfies the entropy inequalities of Proposition \ref{Prop:EntropySolution}. To make appear the constant $c$ in these inequalities, the usual trick is to define a coupling of the particle system $\eta^N$ with another particle system $\zeta^N$ which is stationary with density $c$ so that, at large scales, one can replace the averages of $\zeta^N$ by $c$. Such a coupling has been defined by Rezakhanlou~\cite{Reza} in the case of the infinite lattice $\Z$. The specificity of the present setting comes from the boundary conditions of our system: one needs to choose carefully the flux of particles at $1$ and $2N$ for $\zeta^N$.

The precise definition of our coupling goes as follows. We set
\begin{equ}
p(1)=1-p_N\;,\quad p(-1)=p_N\;,\quad \mbox{and}\quad p(k)=0 \quad \forall k\ne \{-1,1\}\;,
\end{equ}
as well as $b(a,a')=a(1-a')$. We denote by $\eta^{k,\ell}$ the particle configuration obtained from $\eta$ by permuting the values $\eta(k)$ and $\eta(\ell)$. We also denote by $\eta \pm \delta_k$ the particle configuration which coincides with $\eta$ everywhere except at site $k$ where the occupation is set to $\eta(k) \pm 1$. Then, we define
\begin{equs}
\tilde{\cL}^{\mbox{\tiny bulk}}f(\eta,\zeta) &= (2N)^{1+\alpha}\sum_{k,\ell =1}^{2N} p(\ell-k)\times\\
&\Big[\big( b(\eta(k),\eta(\ell))\wedge b(\zeta(k),\zeta(\ell)) \big)\big( f(\eta^{k,\ell},\zeta^{k,\ell}) - f(\eta,\zeta)\big)\\
& + \big(b(\eta(k),\eta(\ell))- b(\eta(k),\eta(\ell))\wedge b(\zeta(k),\zeta(\ell)) \big)\big( f(\eta^{k,\ell},\zeta) - f(\eta,\zeta)\big)\\
& + \big( b(\zeta(k),\zeta(\ell))- b(\eta(k),\eta(\ell))\wedge b(\zeta(k),\zeta(\ell)) \big)\big( f(\eta,\zeta^{k,\ell}) - f(\eta,\zeta)\big) \Big]\;,
\end{equs}
and
\begin{equs}
\tilde{\cL}^{\mbox{\tiny bdry}} f(\eta,\zeta) &= (2N)^{1+\alpha}(2p_N-1) (1-c) \zeta(1) \big( f(\eta,\zeta-\delta_1) - f(\eta,\zeta)\big)\\
& + (2N)^{1+\alpha}(2p_N-1) c (1-\zeta(2N)) \big( f(\eta,\zeta+\delta_{2N}) - f(\eta,\zeta)\big)\;.
\end{equs}
We consider the stochastic process $(\eta^N_t,\zeta^N_t), t\geq 0$ associated to the generator $\tilde{\cL}=\tilde{\cL}^{\mbox{\tiny bulk}}+\tilde{\cL}^{\mbox{\tiny bdry}}$. From now on, we will always assume that $\eta^N_0$ has law $\iota_N$, where $\iota_N$ satisfies Assumption \ref{Assumption:IC}, and that $\zeta^N_0$ is distributed as a product of Bernoulli measures with parameter $c$. Furthermore, we will always assume that the coupling at time $0$ is such that 
\begin{equ}
\sgn(\eta^N_0(k)-\zeta^N_0(k)) = \sgn(f(k/2N)-c)\;,\quad \forall k\in\{1,\ldots,2N\}\;,
\end{equ}
where $f$ is the macroscopic density profile of Assumption \ref{Assumption:IC}. Such a coupling can be constructed by considering i.i.d.~r.v.~$U_1,\ldots,U_{2N}$ uniformly distributed over $[0,1]$, and by setting $\eta^N_0(k)$ (resp.~$\zeta^N_0(k)$) to $1$ if $U_k \le f(k/2N)$ (resp.~$U_k \le c$). We let $\tilde{\P}^N_{\iota_N,c}$ be the law of the process $(\eta^N,\zeta^N)$.
\begin{remark}
The process $\eta^N$ follows the dynamics of the WASEP with zero-flux boundary conditions. The process $\zeta^N$ follows the dynamics of the WASEP with some open boundary conditions chosen in a such a way that the process is stationary with density $c$. Actually, we prescribe the minimal jump rates at the boundary for the process to be stationary with density $c$: there is neither entering flux at $1$ nor exiting flux at $2N$. This choice is convenient for establishing the entropy inequalities. Let us also mention that the coupling is such that the order of $\zeta^N$ and $\eta^N$ is preserved. More precisely, if in both particle systems there is a particle which can attempt a jump from $k$ to $\ell$, then the jump times are simultaneous.
\end{remark}
It will actually be important to track the sign changes in the pair $(\eta^N,\zeta^N)$. To that end, we let $F_{k,\ell}(\eta,\zeta) = 1$ if $\eta(k)\geq \zeta(k)$ and $\eta(\ell)\geq \zeta(\ell)$; and $F_{k,\ell}(\eta,\zeta) = 0$ otherwise. We say that a subset $C$ of consecutive integers in $\{1,\ldots,2N\}$ is a cluster with constant sign if for all $k,\ell \in C$ we have $F_{k,\ell}(\eta,\zeta) =1$, or for all $k,\ell \in C$ we have $F_{k,\ell}(\zeta,\eta) = 1$. For a given configuration $(\eta,\zeta)$, we let $n$ be the minimal number of clusters needed to cover $\{1,\ldots,2N\}$: we will call $n$ the number of sign changes. There is not necessarily a unique choice of covering into $n$ clusters. Let $C(i),i\leq n$ be any such covering and let $1=k_1 <  k_2 < \ldots k_n < k_{n+1} = 2N+1$ be the integers such that $C(i)=\{k_i,k_{i+1}-1\}$.

\begin{lemma}\label{Lemma:Coupling}
Under $\tilde{\P}^N_{\iota_N,c}$, the process $\eta^N$ has law $\P^N_{\iota_N}$ while the process $\zeta^N$ is stationary with law $\otimes_{k=1}^{2N} \mbox{Be}(c)$. Furthermore, the number of sign changes $n(t)$ is smaller than $n(0)+3$ at all time $t\geq 0$.
\end{lemma}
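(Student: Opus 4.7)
The lemma has three parts that I would address in order. The first two are generator-level identifications that reduce to routine computations; the third is the substantive claim and the main obstacle.

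\textit{Marginal identifications.} I would project the coupled generator $\tilde\cL$ onto functions of $\eta$ alone (resp.\ of $\zeta$ alone) and observe that, for each pair $(k,\ell)$ in the bulk, the joint rate $b(\eta(k),\eta(\ell))\wedge b(\zeta(k),\zeta(\ell))$ plus the solo $\eta$-residual $b(\eta(k),\eta(\ell)) - b(\eta(k),\eta(\ell))\wedge b(\zeta(k),\zeta(\ell))$ totals exactly $b(\eta(k),\eta(\ell))$, the original WASEP rate. Since $\tilde\cL^{\mbox{\tiny bdry}}$ acts only on $\zeta$, the $\eta$-marginal generator is the sped-up $\cL_N$, giving $\eta^N\sim \P^N_{\iota_N}$. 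The symmetric calculation identifies the $\zeta$-marginal as WASEP in the bulk together with the two prescribed single-site boundary rates.

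\textit{Stationarity.} To show that $\pi_c:=\otimes_{k=1}^{2N}\mathrm{Be}(c)$ is invariant under the $\zeta$-marginal dynamics $\bar\cL$, I would verify $\E_{\pi_c}[\bar\cL g]=0$ for $g=\zeta(k)$ and extend to cylinder functions by tensorisation. Under $\pi_c$ every bond $(k,k+1)$ has equal probability $c(1-c)$ of being $(1,0)$ and $(0,1)$, so the two asymmetric WASEP rates give a uniform current $(2p_N-1)c(1-c)$ across every bond. Interior currents telescope, but mass is lost at site $1$ and created at site $2N$; the prescribed rates $(2p_N-1)(1-c)\zeta(1)$ and $(2p_N-1)c(1-\zeta(2N))$ are tuned exactly to cancel this, as a short computation confirms. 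Combined with $\zeta^N_0\sim\pi_c$, this yields $\zeta^N_t\sim\pi_c$ for all $t\ge 0$.

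\textit{Sign-change bound.} This is where the main difficulty lies. First, for the bulk, I would do a finite case analysis over the $2^4$ configurations of $(\eta(k),\eta(k+1),\zeta(k),\zeta(k+1))$ and check that every admissible transition — joint, $\eta$-solo, or $\zeta$-solo — either preserves the local cluster count or strictly decreases it by merging two adjacent clusters of opposite sign. Thus $n$ is non-increasing between two consecutive boundary events. At the boundaries only $\zeta$ can change: $\zeta(1):1\to 0$ (so $v(1):=\eta(1)-\zeta(1)$ increases by one) and $\zeta(2N):0\to 1$ (so $v(2N)$ decreases by one). Such an event increases $n$ by at most $1$, and only in a specific local configuration: at site $1$ we need $\eta(1)=\zeta(1)=1$ before the event with the first non-zero $v$ to the right equal to $-1$, and symmetrically at site $2N$.

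The heart of the argument is then to cap the total number of such productive boundary events. My plan is to pair each productive event at a given boundary with a subsequent bulk transition that strictly decreases $n$ by at least one: after a productive event at site $1$ the local pattern is $v(1)=+1$, $v(2)=-1$, and the only way to make $v(1)$ available for another productive event is via either a $\zeta$ left-jump from $2$ to $1$ or an $\eta$ right-jump from $1$ to $2$, each of which is a bulk transition that provably merges the newly created $+$ cluster with the adjacent $-$ cluster and so decreases $n$. Summing this bookkeeping over both boundaries and accounting for an unpaired initial transient (one event at each boundary plus one extra due to the neutral sites in the initial coupling) yields the claimed constant $+3$. Making this pairing precise is the delicate step; I would carry it out by induction on boundary events, following the strategy of Rezakhanlou~\cite{Reza} adapted to the zero-flux geometry at hand.
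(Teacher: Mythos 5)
Your marginal-law and stationarity computations match the paper, which indeed dispatches them as routine, and your bulk case analysis captures the paper's key observation: setting $v(k)=\eta^N(k)-\zeta^N(k)$, a coupled bulk jump at a bond either leaves $(v(k),v(k+1))$ unchanged, swaps a $\pm 1$ discrepancy with an adjacent $0$, or annihilates an adjacent $(\pm 1,\mp 1)$ pair into $(0,0)$; it never creates a $(\pm 1,\mp 1)$ pair from $(0,0)$, so $n$ cannot increase under bulk transitions.

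Your boundary bookkeeping, however, has a gap. After a productive event at site $1$ the configuration is $v(1)=+1$ with the \emph{first nonzero} among $v(2),v(3),\ldots$ equal to $-1$; that $-1$ need not sit at site $2$, and there may be arbitrarily many intervening zeros. The newly created $+$ discrepancy can then drift rightward through those zeros by bulk shifts, each of which leaves $n$ unchanged and also leaves site $1$ unproductive (the first nonzero is now $+$); the decrease in $n$ you want to pair with happens only when this $+$ eventually annihilates a $-$, possibly far from bond $(1,2)$ and many transitions later. So the pairing cannot be with the first move at bond $(1,2)$ as you propose, and the closing accounting (\emph{one event at each boundary plus one extra due to neutral sites}) is asserted rather than derived. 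The paper's one-line remark (the boundaries create at most two new sign changes \emph{in total}) is made precise without any dynamical bookkeeping by pinning the signs at the ends: adjoin ghost sites $0$ and $2N+1$ with $v(0)\equiv +1$ and $v(2N+1)\equiv -1$ and let $\tilde n$ be the minimal number of clusters covering $\{0,\ldots,2N+1\}$. Bulk jumps do not increase $\tilde n$, and a boundary event at site $1$ only raises $v(1)$; since $v(0)=+1$ this never creates a new alternation near the left end (symmetrically at $2N$). Hence $\tilde n(t)\le\tilde n(0)$, and combining $n\le\tilde n$ with $\tilde n(0)\le n(0)+2$ gives $n(t)\le n(0)+2\le n(0)+3$ outright.
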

\begin{proof}
It is simple to check the assertion on the laws of the marginals $\eta^N$ and $\zeta^N$. Regarding the number of sign changes, the key observation is the following. In the bulk $\{2,\ldots,2N-1\}$, to create a new sign change we need to have two consecutive sites $k,\ell$ such that $\eta^N(k)=\zeta^N(k)=1$, $\eta^N(\ell)=\zeta^N(\ell)=0$ and we need to let one particle jump from $k$ to $\ell$, but not both. However, our coupling does never allow such a jump. Therefore, the number of sign changes can only increase at the boundaries due to the interaction of $\zeta^N$ with the reservoirs: this can create at most $2$ new sign changes, thus concluding the proof.
\end{proof}

Assumption \ref{Assumption:IC} ensures the existence of a constant $C>0$ such that $n(0) < C$ almost surely for all $N\geq 1$. We now derive the entropy inequalities at the microscopic level. Recall that $\tau_k$ stands for the shift operator with periodic boundary conditions, and let $\langle u,v\rangle_N = (2N)^{-1} \sum_{k=1}^{2N} u(k/2N)v(k/2N)$ denote the discrete $L^2$ product.
\begin{lemma}[Microscopic inequalities]\label{Lemma:MicroIneq}
Let $\iota_N$ be a measure on $\{0,1\}^{2N}$ satisfying Assumption \ref{Assumption:IC}. For all $\varphi\in \cC^\infty_c([0,\infty)\times[0,1],\R_+)$, all $\delta > 0$ and all $c\in [0,1]$, we have $\lim_{N\rightarrow\infty} \tilde{\P}^N_{\iota_N,c}(\cI^N(\varphi) \geq -\delta) = 1$ where
\begin{equs}
\cI^N(\varphi) &:=\int_0^\infty\!\!\! \bigg(\Big\langle \partial_s \varphi(s,\cdot) , \big( \eta^N_s(\cdot) - \zeta^N_s(\cdot) \big)^\pm \Big\rangle_N \!\!\!+ \Big\langle \partial_x \varphi(s,\cdot),H^\pm\big(\tau_\cdot \eta^N_s,\tau_\cdot \zeta^N_s \big) \Big\rangle_N \\
&+ 2 \Big( (1-c)^\pm \varphi(s,0) + (0-c)^\pm \varphi(s,1) \Big)\bigg) ds+ \Big\langle \varphi(0,\cdot) , \big( \eta^N_0(\cdot) - \zeta^N_0(\cdot) \big)^\pm \Big\rangle_N\;,
\end{equs}
where $H^+(\eta,\zeta) = -2 \big( b(\eta(1),\eta(0))-b(\zeta(1),\zeta(0))\big) F_{1,0}(\eta,\zeta)$ and $H^-(\eta,\zeta)=H^+(\zeta,\eta)$.
\end{lemma}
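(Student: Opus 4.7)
The plan is to follow Rezakhanlou's microscopic entropy approach~\cite{Reza}, adapted to the vanishing asymmetry and to the specific reservoir mechanism of $\tilde{\cL}^{\mbox{\tiny bdry}}$. I will treat only the $+$ case, the $-$ case being analogous after swapping $\eta$ and $\zeta$. Set $\Xi^N(t) := \langle \varphi(t,\cdot), (\eta^N_t - \zeta^N_t)^+ \rangle_N$. Since $\varphi$ has compact support in $[0,\infty)\times[0,1]$, $\Xi^N(t)\equiv 0$ for $t$ large enough, so applying Dynkin's formula between $0$ and $+\infty$ yields after rearrangement
\[ \cI^N(\varphi) = \int_0^\infty\Big(-\tilde{\cL}\Xi^N(s) + \langle \partial_x\varphi(s,\cdot),H^+(\tau_\cdot\eta^N_s,\tau_\cdot\zeta^N_s)\rangle_N + 2(1-c)^+\varphi(s,0) + 2(-c)^+\varphi(s,1)\Big)ds - M^N_\infty, \]
where $M^N$ is the compensated martingale. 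A standard BDG bound gives $\E[(M^N_\infty)^2]\lesssim (2N)^{1+\alpha}\cdot 2N\cdot(\|\varphi\|_\infty/(2N))^2\cdot T = O(N^{\alpha-1})$, so $M^N_\infty\to 0$ in $L^2$. It therefore remains to prove that the integrand above is asymptotically non-negative in probability.

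The core step will be a discrete Kato inequality for the bulk generator $\tilde{\cL}^{\mbox{\tiny bulk}}$. I will enumerate, at each neighboring pair $(k,k+1)$, the sixteen configurations of $(\eta(k),\eta(k+1),\zeta(k),\zeta(k+1))\in\{0,1\}^4$ together with the three transition types (both jump, only $\eta$ jumps, only $\zeta$ jumps). Case by case, the contribution to $\varphi(k)(\eta-\zeta)^+(k)+\varphi(k+1)(\eta-\zeta)^+(k+1)$ of an ``only one jumps'' transition either matches the leftward or rightward transport of $(\eta-\zeta)^+$ within a constant-sign cluster $\{\eta\geq\zeta\}$ (this is exactly the part captured by the indicator $F_{k+1,k}$), or strictly \emph{removes} $(\eta-\zeta)^+$ mass at a cluster boundary; it is never a creation. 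This yields a pointwise bound $\tilde{\cL}^{\mbox{\tiny bulk}}\Xi^N \le \frac{1}{2N}\sum_{k=1}^{2N-1}(\varphi(k)-\varphi(k+1))\,j^N_k$, where $j^N_k$ is the asymmetric transport current $(2N)^{1+\alpha}(2p_N-1)[b(\eta(k+1),\eta(k))-b(\zeta(k+1),\zeta(k))]F_{k+1,k}(\eta,\zeta)$ plus its rightward analogue. The symmetric part of the rates contributes only an $O(N^{\alpha-1})$ discrete Laplacian correction. Using $(2N)^{1+\alpha}(2p_N-1)\sim 2\cdot 2N$ together with $\varphi(k)-\varphi(k+1)\sim -(2N)^{-1}\partial_x\varphi(k/(2N))$, a direct computation shows that the right-hand side equals $\langle \partial_x\varphi(s,\cdot),H^+(\tau_\cdot\eta^N_s,\tau_\cdot\zeta^N_s)\rangle_N + o(1)$, with no interior integration-by-parts defect (the sum over bonds is already in divergence form, so no IBP endpoint correction appears).

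For the boundary contribution, a direct computation — $\tilde{\cL}^{\mbox{\tiny bdry}}$ only affects $\zeta$ at sites $1$ and $2N$ at rate $(2N)^{1+\alpha}(2p_N-1)\sim 2\cdot 2N$ — gives $-\tilde{\cL}^{\mbox{\tiny bdry}}\Xi^N = -2(1-c)\eta^N_s(1)\zeta^N_s(1)\varphi(s,0)+2c(1-\zeta^N_s(2N))\eta^N_s(2N)\varphi(s,1)+o(1)$. Combining the two previous paragraphs with the explicit boundary terms of $\cI^N$ (and using $(1-c)^+=1-c$, $(-c)^+=0$), the integrand is bounded below by $2(1-c)(1-\eta^N_s(1)\zeta^N_s(1))\varphi(s,0)+2c(1-\zeta^N_s(2N))\eta^N_s(2N)\varphi(s,1)+o(1)$, which is pointwise non-negative since $\eta\zeta$ and $(1-\zeta)\eta$ lie in $\{0,1\}$. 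The specific choice of reservoir rates $c$ and $1-c$ in $\tilde{\cL}^{\mbox{\tiny bdry}}$ is engineered precisely so that this cancellation produces a non-negative residual.

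The main obstacle will be the case-by-case verification of the Kato inequality in the bulk and the careful bookkeeping of signs in the IBP-type rearrangement, in particular identifying the factor $F_{k+1,k}$ as the natural restriction of the transport current of $(\eta-\zeta)^+$ to constant-sign clusters. The combinatorial bookkeeping is elementary but exacting. Let me also note that Lemma~\ref{Lemma:Coupling} is not used in this particular microscopic inequality; it enters downstream in the one-block and two-block estimates needed to pass from these microscopic entropy inequalities to the macroscopic entropy conditions of Proposition~\ref{Prop:EntropySolution}.
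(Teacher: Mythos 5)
Your overall strategy (Dynkin's formula, split the generator into bulk and boundary parts, discrete Kato inequality for $\tilde{\cL}^{\text{bulk}}$, BDG for the martingale) is the same as the paper's, and your boundary computation is essentially right. But there is a genuine gap in the bulk step: you assert that ``the symmetric part of the rates contributes only an $O(N^{\alpha-1})$ discrete Laplacian correction'' and, more pointedly, that Lemma~\ref{Lemma:Coupling} is not used in this lemma. Both claims are false, and the gap sits exactly where the paper must do work.

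Once the Kato inequality reduces $\tilde{\cL}^{\mbox{\tiny bulk}}\Xi^N$ to a bond sum $\sum_k(\varphi(k)-\varphi(k+1))\cdot[\text{transport current}]$, the symmetric part of that current still carries the indicator $F_{k,k+1}$. To turn the symmetric bond sum into a Laplacian $\Delta\varphi$ one must sum by parts, and the summation by parts closes only inside a constant-sign cluster; at each cluster boundary the telescoping fails and a leftover term of the form $(\eta-\zeta)^\pm \nabla\varphi$ appears. After multiplying by $(2N)^\alpha(1-p_N)$ each such leftover is $O(N^{\alpha-1})$, so with $n(s)$ sign changes they total $O(n(s)N^{\alpha-1})$. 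If $n(s)$ could be of order $N$, this would be $O(N^\alpha)$ and would destroy the inequality. The paper handles this precisely via Lemma~\ref{Lemma:Coupling} (combined with Assumption~\ref{Assumption:IC}), which bounds $n(s)\le n(0)+3$ uniformly in $N$ and $s$, making the cluster-boundary contribution $O(N^{\alpha-1})$ overall. Your write-up never confronts the fact that the symmetric bond sum is a gradient only cluster-wise, and by explicitly disclaiming the use of Lemma~\ref{Lemma:Coupling} you remove the ingredient needed to justify your ``$O(N^{\alpha-1})$ Laplacian correction.''
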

This is an adaptation of Theorem 3.1 in~\cite{Reza}.
\begin{proof}
We define
\begin{equs}
B_t &=\int_0^t \Big( \big\langle \partial_s\varphi(s,\cdot) , \big(\eta_s(\cdot)-\zeta_s(\cdot)\big)^\pm \big\rangle_N + \tilde{\cL}\big\langle \varphi(s,\cdot) , \big(\eta_s(\cdot)-\zeta_s(\cdot)\big)^\pm \big\rangle_N \Big)ds\\
&\quad+ \Big\langle \varphi(0,\cdot) , \big( \eta^N_0(\cdot) - \zeta^N_0(\cdot) \big)^\pm \Big\rangle_N\;.
\end{equs}
We have the identity
\begin{equ}\label{Eq:Dynkin}
\Big\langle \varphi(t,\cdot) , \big( \eta^N_t(\cdot) - \zeta^N_t(\cdot) \big)^\pm \Big\rangle_N = B_t + M_t\;,
\end{equ}
where $M$ is a mean zero martingale. Since $\varphi$ has compact support, the l.h.s.~vanishes for $t$ large enough. Below, we work at an arbitrary time $s$ so we drop the subscript $s$ in the calculations. Moreover, we write $\varphi(k)$ instead of $\varphi(k/2N)$ to simplify notations. We treat separately the boundary part and the bulk part of the generator. Regarding the former, we have
\begin{equs}
{}&\tilde{\cL}^{\mbox{\tiny bdry}}\big\langle \varphi(\cdot) , \big(\eta(\cdot)-\zeta(\cdot)\big)^+ \big\rangle_N\\
&= (2N)^\alpha (2p_N-1) \Big(\varphi(1)\eta(1)\zeta(1)(1-c) - \varphi(2N)\eta(2N) (1-\zeta(2N)) c\Big)\\
&\leq 2 \varphi(0) (1-c) + \cO(N^{-\alpha})\;,
\end{equs}
since $\varphi$ is non-negative and $2p_N-1 \sim 2(2N)^{-\alpha}$. Similarly, we find
\begin{equ}
\tilde{\cL}^{\mbox{\tiny bdry}}\big\langle \varphi(\cdot) , \big(\eta(\cdot)-\zeta(\cdot)\big)^- \big\rangle_N\leq 2 \varphi(2N) (0-c)^- + \cO(N^{-\alpha})\;.
\end{equ}

We turn to the bulk part of the generator. Recall the map $F_{k,\ell}(\eta,\zeta)$, and set $G_{k,\ell}(\eta,\zeta) = 1 - F_{k,\ell}(\eta,\zeta)F_{k,\ell}(\zeta,\eta)$. By checking all the possible cases, one easily gets the following identity
\begin{equs}
\tilde{\cL}^{\mbox{\tiny bulk}}\big(\eta(k)-\zeta(k)\big)^+ &= (2N)^{1+\alpha}\sum_\ell \bigg[ \Big(p(\ell-k)\big(b(\zeta(k),\zeta(\ell))-b(\eta(k),\eta(\ell))\big)\\
&\qquad- p(k-\ell)\big(b(\zeta(\ell),\zeta(k))-b(\eta(\ell),\eta(k))\big) \Big) F_{k,\ell}(\eta,\zeta)\\
&-\Big(p(\ell-k)b(\eta(k),\eta(\ell)) + p(k-\ell)b(\zeta(\ell),\zeta(k))\Big)G_{k,\ell}(\eta,\zeta) \bigg]\;.
\end{equs}
Since $\eta$ and $\zeta$ play symmetric r\^oles in $\tilde{\cL}^{\mbox{\tiny bulk}}$, we find a similar identity for $\tilde{\cL}^{\mbox{\tiny bulk}}\big(\eta(k)-\zeta(k)\big)^-$. Notice that the term on the third line is non-positive, so we will drop it in the inequalities below. We thus get
\begin{equ}
\tilde{\cL}^{\mbox{\tiny bulk}}\big\langle \varphi(\cdot) , \big(\eta(\cdot)-\zeta(\cdot)\big)^\pm \big\rangle_N \leq (2N)^{\alpha} \sum_{\substack{k,\ell= 1\\ \ell = k \pm 1}}^{2N} p(\ell-k) \big(\varphi(k)-\varphi(\ell)\big) I_{k,\ell}^\pm(\eta,\zeta)\;,
\end{equ}
where
\begin{equ}
I_{k,\ell}^+(\eta,\zeta)= \big(b(\zeta(k),\zeta(\ell))-b(\eta(k),\eta(\ell))\big) F_{k,\ell}(\eta,\zeta)\;,\quad I_{k,\ell}^-(\eta,\zeta)= I_{k,\ell}^+(\zeta,\eta) \;.
\end{equ}
Up to now, we essentially followed the calculations made in the first step of the proof of~\cite[Thm 3.1]{Reza}. At this point, we argue differently: we decompose $p(\pm 1)$ into the symmetric part $1-p_N$, which is of order $1/2$, and the asymmetric part which is either $0$ or $2p_N-1\sim 2 (2N)^{-\alpha}$.\\
We start with the contribution of the symmetric part. Recall the definition of the number of sign changes $n$ and of the integers $k_1 < \ldots < k_{n+1}$. Using a discrete integration by parts, one easily deduces that for all $i \leq n$
\begin{equs}
\sum_{\substack{k,\ell= k_i\\ \ell = k \pm 1}}^{k_{i+1}-1} \big(\varphi(k)-\varphi(\ell)\big) I_{k,\ell}^\pm(\eta,\zeta) &= \sum_{k=k_{i}}^{k_{i+1}-2}\big(\eta(k)-\zeta(k)\big)^\pm \Delta \varphi(k)\\
&- \big(\eta(k_{i+1}-1)-\zeta(k_{i+1}-1)\big)^\pm \nabla \varphi(k_{i+1}-2)\\
&+ \big(\eta(k_{i})-\zeta(k_{i})\big)^\pm \nabla \varphi(k_{i}-1)\;.
\end{equs}
Since $n(s)$ is bounded uniformly over all $N\geq 1$ and all $s\geq 0$, we deduce that the boundary terms arising at the second and third lines yield a negligible contribution. Thus we find
\begin{equs}
(2N)^\alpha \sum_{\substack{k,\ell= 1\\ \ell = k \pm 1}}^{2N} (1-p_N)  \big(\varphi(k)-\varphi(\ell)\big) I_{k,\ell}^\pm(\eta,\zeta) = \cO\Big(\frac1{N^{1-\alpha}}\Big)\;.
\end{equs}
Regarding the asymmetric part $p(\pm 1)-(1-p_N)$, a simple calculation yields the identity
\begin{equs}
{}&(2N)^{\alpha} \sum_{\substack{k,\ell= 1\\ \ell = k \pm 1}}^{2N} \big(p(\ell-k)-1+p_N\big) \big(\varphi(k)-\varphi(\ell)\big) I_{k,\ell}^\pm(\eta,\zeta)\\
&= \frac1{2N}\sum_{k=1}^{2N-1} \partial_x \varphi(k) \tau_k H^\pm(\eta,\zeta) + \cO(N^{-\alpha})\;,
\end{equs}
uniformly over all $N\geq 1$. Therefore
\begin{equs}
\tilde{\cL}^{\mbox{\tiny bulk}}\big\langle \varphi(\cdot) , \big(\eta(\cdot)-\zeta(\cdot)\big)^\pm \big\rangle_N \leq \frac1{2N}\sum_{k=1}^{2N-1} \partial_x\varphi(k) \tau_k H^\pm(\eta,\zeta) + \cO\Big(\frac1{N^{\alpha\wedge(1-\alpha)}}\Big)\;.
\end{equs}
Putting together the two contributions of the generator, we get
\begin{equs}
B_t &\leq \int_0^t \Big( \big\langle  \partial_s\varphi(s,\cdot) , \big(\eta^N_s(\cdot)-\zeta^N_s(\cdot)\big)^\pm \big\rangle_N + \big\langle \partial_x \varphi(s,\cdot) , \tau_\cdot H^\pm(\eta^N_s,\zeta^N_s) \big\rangle_N  \\
&\quad+ 2t \big( (1-c)^\pm \varphi(s,0) + (0-c)^\pm \varphi(s,1) \big) \Big)ds\\
&\quad+ \Big\langle \varphi(0,\cdot) , \big( \eta^N_0(\cdot) - \zeta^N_0(\cdot) \big)^\pm \Big\rangle_N +\cO\Big(\frac1{N^{\alpha\wedge(1-\alpha)}}\Big)\;.
\end{equs}
Recall the equation (\ref{Eq:Dynkin}). A simple calculation shows that $\tilde{\E}^N_{\iota_N,c} \langle M \rangle_t \lesssim \frac{1}{N^{1-\alpha}}$ uniformly over all $N\geq 1$ and all $t\geq 0$. Moreover, the jumps of $M$ are almost surely bounded by a term of order $N^{-1}$. Applying the BDG inequality (\ref{Eq:BDG3}), we deduce that
\begin{equ}
\tilde{\E}^N_{\iota_N,c} \Big[\sup_{s\leq t} M_s^2\Big]^\frac12 \lesssim \frac{1}{N^{\frac{1-\alpha}{2}}}\;,
\end{equ}
uniformly over all $N\geq 1$ and all $t\geq 0$. Since $\varphi$ has compact support, $B_t = -M_t$ for $t$ large enough. The assertion of the lemma then easily follows.
\end{proof}
Recall that $\ccM_{T_\ell(u)} \eta$ is the average of $\eta$ on the box $T_\ell(u)$ for any $u\in \{1,\ldots,2N\}$.
\begin{lemma}[Macroscopic inequalities]\label{Lemma:MacroIneq}
Let $\iota_N$ be a measure on $\{0,1\}^{2N}$ satisfying Assumption \ref{Assumption:IC}. For all $\varphi\in \cC^\infty_c([0,\infty)\times[0,1],\R_+)$, all $\delta > 0$ and all $c\in [0,1]$, we have $\lim_{\epsilon \downarrow 0} \varliminf_{N\rightarrow\infty} \bbP^N_{\iota_N}(\cJ^N(\varphi) \geq -\delta) = 1$ where
\begin{equs}[Eq:ClaimHydro]
\cJ^N(\varphi) &:= \int_0^\infty \bigg(\Big\langle \partial_s \varphi(s,\cdot) , \Big( \ccM_{T_{\epsilon N}(\cdot)}(\eta^N_s) - c \Big)^\pm \Big\rangle_N+ \Big\langle \partial_x \varphi(s,\cdot),h^\pm\Big(\ccM_{T_{\epsilon N}(\cdot)}(\eta^N_s),c\Big) \Big\rangle_N \\
&+ 2 \Big( (1-c)^\pm \varphi(s,0) + (0-c)^\pm \varphi(s,1) \Big)\bigg) ds\\
&+ \Big\langle \varphi(0,\cdot) , \Big( \ccM_{T_{\epsilon N}(\cdot)}(\eta^N_0) - c \Big)^\pm \Big\rangle_N\;.
\end{equs}
\end{lemma}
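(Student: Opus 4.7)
The plan is to deduce $\cJ^N(\varphi) \geq -\delta$ from the microscopic inequality $\cI^N(\varphi) \geq -\delta/2$ of Lemma~\ref{Lemma:MicroIneq} via block averaging. Working under the coupled measure $\tilde{\P}^N_{\iota_N,c}$, whose $\eta^N$-marginal is $\bbP^N_{\iota_N}$, it suffices to show that $|\cI^N(\varphi) - \cJ^N(\varphi)| \to 0$ in probability as $N \to \infty$ and then $\epsilon \downarrow 0$. Since $\varphi$ is smooth and compactly supported, replacing each pointwise evaluation $\eta^N_s(k), \zeta^N_s(k)$ appearing in the discrete inner products by its average over $T_{\epsilon N}(k)$ produces only an error of order $\epsilon$ at the level of the integrands (by translation regularity of $\partial_s\varphi$ and $\partial_x\varphi$), which is negligible in the appropriate iterated limit. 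The task then reduces to handling the two types of terms present in $\cI^N(\varphi)$.

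For the terms involving $(\eta^N_s - \zeta^N_s)^\pm$: set $\rho_{\epsilon N}(k) := \ccM_{T_{\epsilon N}(k)}(\eta^N_s)$. By Lemma~\ref{Lemma:Coupling}, the number of sign changes in $(\eta^N_s, \zeta^N_s)$ is almost surely bounded by a constant uniformly in $N$ and $s$; hence for all but a bounded number of sites $k$ (whose cumulative contribution to the Riemann sum is $\cO(\epsilon)$), the sign of $\eta^N_s - \zeta^N_s$ is constant across the whole box $T_{\epsilon N}(k)$. On such a block of constant positive sign, $\ccM_{T_{\epsilon N}(k)}((\eta^N_s - \zeta^N_s)^+) = \rho_{\epsilon N}(k) - \ccM_{T_{\epsilon N}(k)}(\zeta^N_s)$. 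Since $\zeta^N_s$ is stationary under $\otimes_{k=1}^{2N} \mathrm{Be}(c)$, a standard one-block estimate (as in~\cite{LabbeReview}) yields $\ccM_{T_{\epsilon N}(k)}(\zeta^N_s) \to c$ in probability uniformly in $k$, so this quantity converges to $(\rho_{\epsilon N}(k) - c)^+$; the negative part is treated symmetrically.

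For the terms involving $\tau_\cdot H^\pm(\eta^N_s, \zeta^N_s)$: recall that $\tau_k H^+(\eta, \zeta) = -2\bigl(\eta(k+1)(1-\eta(k)) - \zeta(k+1)(1-\zeta(k))\bigr) F_{k+1,k}(\eta, \zeta)$. The two-blocks estimate (classical, see~\cite{LabbeReview}) gives
\begin{equ}
\ccM_{T_{\epsilon N}(u)}\bigl( \eta(\cdot+1)(1-\eta(\cdot)) \bigr) - \rho_{\epsilon N}(u)\bigl(1-\rho_{\epsilon N}(u)\bigr) \longrightarrow 0
\end{equ}
in probability, uniformly in $u$, as $N \to \infty$ and then $\epsilon \downarrow 0$; applied to $\zeta^N_s$ the limit is the deterministic $c(1-c)$. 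Combining this with the sign-change argument of the previous paragraph, on a block of constant sign the indicator $F_{k+1,k}$ is identically equal to $\tun_{\{\eta \geq \zeta\}}$, which at the macroscopic level coincides with $\tun_{\{\rho_{\epsilon N}(u) \geq c\}} = \sgn^+(\rho_{\epsilon N}(u) - c)$ up to negligible terms. This yields the asymptotic identity $\ccM_{T_{\epsilon N}(u)}(\tau_\cdot H^\pm(\eta^N_s, \zeta^N_s)) \approx h^\pm(\rho_{\epsilon N}(u), c)$, which completes the replacement and hence the proof.

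The main obstacle is the indicator $F_{k+1,k}(\eta, \zeta)$ inside $H^\pm$: it couples the local geometry of $(\eta, \zeta)$ to the sign of $\rho - c$ in a way that a naive block average cannot resolve. The essential input overcoming this difficulty is Lemma~\ref{Lemma:Coupling}, which forces the number of sign changes to remain bounded at all times; this makes $F_{k+1,k}$ essentially constant on each box $T_{\epsilon N}(k)$, and ensures that the constant is compatible with the macroscopic sign $\sgn(\rho_{\epsilon N}(k) - c)$ after the density $c$ of $\zeta^N_s$ is identified by the one- and two-blocks estimates.
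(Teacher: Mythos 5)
Your proposal follows the paper's proof: pass from Lemma~\ref{Lemma:MicroIneq} to the macroscopic inequality by block averaging, using the uniform-in-time bound on sign changes from Lemma~\ref{Lemma:Coupling} to commute $\ccM_{T_{\epsilon N}}$ past $(\cdot)^\pm$ and past the indicator inside $H^\pm$, the replacement lemma (Theorem~\ref{Th:Replacement}) for the $\eta$-flux, and stationarity of $\zeta^N$ under the product of Bernoulli measures with parameter $c$ for the $\zeta$-flux. The only slip is writing $\tun_{\{\rho_{\epsilon N}(u)\geq c\}}=\sgn^+(\rho_{\epsilon N}(u)-c)$ (the latter is $\tun_{\{\rho_{\epsilon N}(u)> c\}}$), which is harmless: the paper sidesteps the matter by noting that on a cluster of constant positive sign the inequality $\ccM\eta\geq\ccM\zeta$ already makes $h^+\big(\ccM\eta,\ccM\zeta\big)=\tilde{\Phi}(\ccM\eta)-\tilde{\Phi}(\ccM\zeta)$, so that $\ccM H^\pm - h^\pm$ reduces to a sum of replacement errors in $\Phi$ and $\tilde{\Phi}$.
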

\begin{proof}
Since at any time $s\geq 0$, $\zeta^N(s,\cdot)$ is distributed according to a product of Bernoulli measures with parameter $c$, we deduce that
\begin{equ}
\lim_{\epsilon \downarrow 0} \lim_{N\rightarrow\infty} \tilde{\E}^N_{\iota_N,c}\Big[\frac1{2N} \sum_{u=1}^{2N} \Big| \ccM_{T_{\epsilon N}(u)}(\zeta^N_s) - c\Big|\Big] = 0\;.
\end{equ}
and consequently, by Fubini's Theorem and stationarity, we have
\begin{equ}
\lim_{\epsilon \downarrow 0} \lim_{N\rightarrow\infty} \tilde{\E}^N_{\iota_N,c}\Big[\int_0^t \frac1{2N} \sum_{u=1}^{2N} \Big| \ccM_{T_{\epsilon N}(u)}(\zeta^N_s) - c\Big| ds \Big] = 0\;.
\end{equ}
Now we observe that for all $\epsilon >0$, we have $\tilde{\P}^N_{\iota_N,c}$ almost surely
\begin{equ}
\Big\langle \varphi(0,\cdot) , \big( \eta^N_0(\cdot) - \zeta^N_0(\cdot) \big)^\pm \Big\rangle_N = \Big\langle \varphi(0,\cdot) , \ccM_{T_{\epsilon N}(\cdot)}\big( \eta^N_0 - \zeta^N_0 \big)^\pm \Big\rangle_N + \cO(\epsilon)\;.
\end{equ}
Recall the coupling we chose for $(\eta^N_0(\cdot),\zeta^N_0(\cdot))$. Since $\tilde{\P}^N_{\iota_N,c}$ almost surely the number of sign changes $n(0)$ is bounded by some constant $C>0$ uniformly over all $N\geq 1$, we deduce using the previous identity that
\begin{equ}
\Big\langle \varphi(0,\cdot) , \big( \eta^N_0(\cdot) - \zeta^N_0(\cdot) \big)^\pm \Big\rangle_N = \Big\langle \varphi(0,\cdot) , \big( \ccM_{T_{\epsilon N}(\cdot)}\eta^N_0 - \ccM_{T_{\epsilon N}(\cdot)}\zeta^N_0 \big)^\pm \Big\rangle_N + \cO(\epsilon)\;.
\end{equ}
Therefore, by Lemma \ref{Lemma:MicroIneq}, we deduce that the statement of the lemma follows if we can show that for all $\delta > 0$
\begin{equs}[Eq:ClaimHydro2]
\varlimsup_{\epsilon \downarrow 0} \varlimsup_{N\rightarrow\infty} \tilde{\P}^N_{\iota_N,c}&\bigg( \int_0^t \frac1{2N} \sum_{u=1}^{2N} \Big| \ccM_{T_{\epsilon N}(u)} \big( \eta^N_s - \zeta^N_s \big)^\pm\\
&\qquad\qquad - \big( \ccM_{T_{\epsilon N}(u)}(\eta^N_s-\zeta^N_s) \big)^\pm \Big| ds > \delta \bigg) = 0\;,\\
\varlimsup_{\epsilon \downarrow 0} \varlimsup_{N\rightarrow\infty} \tilde{\P}^N_{\iota_N,c}&\bigg( \int_0^t \frac1{2N} \sum_{u=1}^{2N} \Big| \ccM_{T_{\epsilon N}(u)} H^\pm(\eta^N_s,\zeta^N_s) \\
&\qquad\qquad- h^\pm\Big( \ccM_{T_{\epsilon N}(u)}(\eta^N_s), \ccM_{T_{\epsilon N}(u)}(\zeta^N_s)\Big) \Big| ds > \delta \bigg) = 0\;.
\end{equs}
We restrict ourselves to proving the second identity, since the first is simpler. Let $\cN^+_s$, resp. $\cN^-_s$, be the set of $u\in\{1,\ldots,2N\}$ such that $\eta_s \geq \zeta_s$, resp. $\zeta_s \geq \eta_s$, on the whole box $T_{\epsilon N}(u)$. By Lemma \ref{Lemma:Coupling}, $2N-\#\cN^+_s-\#\cN^-_s$ is of order $\epsilon N$ uniformly over all $s$, all $N\geq 1$ and all $\epsilon$. Therefore, we can neglect the contribution of all $u \notin \cN^+_s \cup \cN^-_s$. If we define $\Phi(\eta)=-2 \eta(1)(1-\eta(0))$ and if we let $\tilde{\Phi}(a)$ be as in (\ref{Eq:tildePhi}) below, then for all $u\in \cN^+_s$ we have 
\begin{equ}
\ccM_{T_{\epsilon N}(u)} H^-(\eta^N_s,\zeta^N_s) - h^-\Big( \ccM_{T_{\epsilon N}(u)}(\eta^N_s), \ccM_{T_{\epsilon N}(u)}(\zeta^N_s)\Big) = 0\;,
\end{equ}
as well as
\begin{equs}
{}&\ccM_{T_{\epsilon N}(u)} H^+(\eta^N_s,\zeta^N_s) - h^+\Big( \ccM_{T_{\epsilon N}(u)}(\eta^N_s), \ccM_{T_{\epsilon N}(u)}(\zeta^N_s)\Big)\\
= \;&\ccM_{T_{\epsilon N}(u)} \Phi(\eta^N_s) - \tilde{\Phi}\Big(\ccM_{T_{\epsilon N}(u)} \eta^N_s\Big)- \ccM_{T_{\epsilon N}(u)} \Phi(\zeta^N_s) + \tilde{\Phi}\Big(\ccM_{T_{\epsilon N}(u)} \zeta^N_s\Big)\;.
\end{equs}
Similar identities hold for every $u\in \cN^-_s$. We deduce that the second identity of (\ref{Eq:ClaimHydro2}) follows if we can show that for all $\delta >0$
\begin{equs}
\varlimsup_{\epsilon\downarrow 0} \varlimsup_{N\rightarrow\infty} \P^N_{\iota_N}\bigg( \int_0^t \frac1{2N} \sum_{u=1}^{2N} \Big| \ccM_{T_{\epsilon N}(u)} \Phi(\eta^N_s) - \tilde{\Phi}\Big(\ccM_{T_{\epsilon N}(u)}\eta_s\Big) \Big| ds > \delta \bigg) &= 0\;,\\
\varlimsup_{\epsilon\downarrow 0} \varlimsup_{N\rightarrow\infty} \tilde{\E}^N_{\iota_N,c}\bigg[ \int_0^t \frac1{2N} \sum_{u=1}^{2N} \Big| \ccM_{T_{\epsilon N}(u)}\Phi(\zeta^N_s) - \tilde{\Phi}\Big(\ccM_{T_{\epsilon N}(u)}\zeta^N_s\Big) \Big| ds \bigg] &= 0\;.
\end{equs}
The first convergence is ensured by Theorem \ref{Th:Replacement}, while the second follows from the stationarity of $\zeta^N$ and the Ergodic Theorem. This completes the proof of the lemma.
\end{proof}
\begin{proof}[of Theorem \ref{Th:HydroProd}]
For any given $\epsilon > 0$, we have
\begin{equs}[Eq:AverageEta]
\ccM_{T_{2\epsilon N}(k)}(\eta_s) &= \frac1{2\epsilon} \rho^N\Big(s,\Big[\frac{k}{2N}-\epsilon,\frac{k}{2N}+\epsilon\Big]\Big)\\
&= \frac1{2\epsilon} \rho^N\Big(s,[x-\epsilon,x+\epsilon]\Big) + \cO(N^{-1})\;,
\end{equs}
uniformly over all $k\in\{1,\ldots,2N-1\}$, all $x\in \Big[\frac{k}{2N},\frac{k+1}{2N}\Big]$ and all $N\geq 1$. Notice that the $\cO(N^{-1})$ depends on $\epsilon$. For all $\rho\in \bbD\big([0,\infty),\cM([0,1])\big)$, we set
\begin{equs}
V_c(\epsilon,\rho) &:= \int_0^\infty \bigg(\Big\langle \partial_s \varphi(s,\cdot) , \Big( \frac1{2\epsilon} \rho\Big(s,\Big[\cdot-\epsilon,\cdot+\epsilon\Big]\Big) - c \Big)^\pm \Big\rangle \\
&\quad+ \Big\langle \partial_x \varphi(s,\cdot),h^\pm\Big(\frac1{2\epsilon} \rho\Big(s,\Big[\cdot-\epsilon,\cdot+\epsilon\Big]\Big),c\Big) \Big\rangle \\
&\quad+ 2 \Big( (1-c)^\pm \varphi(s,0) + (0-c)^\pm \varphi(s,1) \Big)\bigg) ds\\
&\quad+\Big \langle \varphi(0,\cdot), \Big( \frac1{2\epsilon} \rho\Big(0,\Big[\cdot-\epsilon,\cdot+\epsilon\Big]\Big) - c \Big)^\pm \Big\rangle\;.
\end{equs}
Combining (\ref{Eq:AverageEta}), (\ref{Eq:ClaimHydro}) and the continuity of the maps $h^\pm(\cdot,c)$ and $(\cdot)^\pm$, we deduce that for any $\delta > 0$, we have
\begin{equs}
\lim_{\epsilon \downarrow 0} \varliminf_{N\rightarrow\infty} \bbP^N_{\iota_N}&\big( V_c(\epsilon,\rho^N) \geq -\delta\big) = 1\;.
\end{equs}
At this point, we observe that for all $\varphi\in\cC([0,1],\R_+)$ we have
\begin{equ}
\langle \rho^N(t),\varphi \rangle \leq \frac1{2N} \sum_{k=1}^{2N} \varphi(k/2N)\;,
\end{equ}
so that a simple argument ensures that for every limit point $\rho$ of $\rho^N$ and for all $t\geq 0$, the measure $\rho(t,dx)$ is absolutely continuous with respect to the Lebesgue measure, and its density is bounded by $1$. Therefore, any limit point is of the form $\rho(t,dx)=\eta(t,x)dx$ with $\eta\in L^\infty\big([0,\infty)\times(0,1)\big)$. Let $\P$ be the law of the limit of a converging subsequence $\rho^{N_i}$. Since $\rho \mapsto V_c(\epsilon,\rho)$ is a $\P$-a.s.~continuous map on $\bbD\big([0,\infty),\cM([0,1])\big)$, we have for all $\epsilon > 0$
\begin{equ}
\varlimsup_{i\rightarrow\infty} \bbP^{N_i}_{\iota_{N_i}}\big( V_c(\epsilon,\rho^{N_i}) \geq -\delta\big) \leq \bbP\big( V_c(\epsilon,\rho) \geq -\delta\big)\;.
\end{equ}
For any $\rho$ of the form $\rho(t,dx) = \eta(t,x)dx$, we set
\begin{equs}
V_c(\rho) &:= \int_0^\infty \bigg(\Big\langle \partial_s \varphi(s,\cdot) , \Big( \eta(s,\cdot) - c \Big)^\pm \Big\rangle + \Big\langle \partial_x \varphi(s,\cdot),h^\pm\Big(\eta(s,\cdot),c\Big) \Big\rangle \\
&\quad+ 2 \Big( (1-c)^\pm \varphi(s,0) + (0-c)^\pm \varphi(s,1) \Big)\bigg) ds + \Big\langle \varphi(0,\cdot), (\eta_0-c)^\pm \Big\rangle \;,
\end{equs}
and we observe that by Lebesgue Differentiation Theorem, we have $\P$-a.s.~$V_c(\rho) = \lim_{\epsilon\downarrow 0} V_c(\epsilon,\rho)$. Therefore,
\begin{equs}
\P\big(V_c(\rho) \geq -\delta \big) &= \P\big(\lim_{\epsilon\downarrow 0} V_c(\epsilon,\rho) \geq -\delta \big) \geq \E\big[\varlimsup_{\epsilon\downarrow 0} \tun_{\{V_c(\epsilon,\rho) \geq -\delta/2\}}\big]\\
&\geq \varlimsup_{\epsilon\downarrow 0}\E\big[ \tun_{\{V_c(\epsilon,\rho) \geq -\delta/2\}}\big] \geq \varlimsup_{\epsilon\downarrow 0}\varlimsup_{i\rightarrow\infty} \bbP^{N_i}_{\iota_{N_i}}\big( V_c(\epsilon,\rho^{N_i}) \geq -\delta/2\big) =1\;,
\end{equs}
so the process $\big(\eta(t,x),t\geq 0, x\in (0,1)\big)$ under $\P$ coincides with the unique entropy solution of (\ref{PDEBurgersDirichlet}), thus concluding the proof.
\end{proof}

\subsection{The replacement lemma}

Let $r\geq 1$ be an integer and $\Phi:\{0,1\}^r \rightarrow\R$. For all $\eta \in \{0,1\}^{2N}$ and as soon as $2N \geq r$, we define
\begin{equ}
\Phi(\eta) := \Phi\big(\eta(1),\ldots,\eta(r)\big)\;.
\end{equ}
We also introduce the expectation of $\Phi$ under a product of Bernoulli measures with parameter $a \in [0,1]$:
\begin{equ}\label{Eq:tildePhi}
\tilde{\Phi}(a) := \sum_{\eta \in \{0,1\}^r} \Phi(\eta) a^{\#\{i:\eta(i)=1\}} (1-a)^{\#\{i:\eta(i)=0\}}\;.
\end{equ}
We consider the sequence $\Phi(\eta)(k):= \Phi(\tau_k \eta)$ and the associated averages $\ccM_{T_\ell(k)} \Phi(\eta)$. The ``replacement lemma" controls the probability that the following quantity is large
\begin{equ}
V_{\ell}(\eta) = \Big| \ccM_{T_\ell(0)} \Phi(\eta) - \tilde{\Phi}\Big(\ccM_{T_\ell(0)} \eta\Big)\Big|\;.
\end{equ}

\begin{theorem}[Replacement lemma]\label{Th:Replacement}
We work under Assumption \ref{Assumption:IC}. For every $\delta > 0$, we have
\begin{equ}\label{Eq:Replacement}
\varlimsup_{\epsilon\downarrow 0}\varlimsup_{N\rightarrow\infty} \bbP^N_{\iota_N} \Big(\int_0^t \frac{1}{N}\sum_{k=1}^{2N} V_{\epsilon N}(\tau_k\eta_s) ds \geq \delta \Big) = 0\;.
\end{equ}
\end{theorem}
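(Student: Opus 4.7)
The plan is to follow the classical two-step strategy of Guo--Papanicolaou--Varadhan, adapted to the present weakly asymmetric dynamics with zero-flux boundary. Let $\nu_N^\star$ denote the product Bernoulli measure on $\{0,1\}^{2N}$ with parameter $1/2$; it is reversible for the symmetric component of the sped-up generator, and under Assumption \ref{Assumption:IC} we have $H(\iota_N\,|\,\nu_N^\star) \le 2N\log 2$. The entropy inequality reduces \eqref{Eq:Replacement} to showing that for every $\gamma > 0$,
\begin{equ}
\varlimsup_{\epsilon \downarrow 0} \varlimsup_{N\to \infty} \frac{1}{N} \log \E_{\nu_N^\star}\Big[\exp\Big(\gamma N \int_0^t \frac{1}{N}\sum_{k=1}^{2N} V_{\epsilon N}(\tau_k \eta_s)\, ds\Big)\Big] \le 0\;.
\end{equ}
By Feynman--Kac and the variational characterisation of the largest eigenvalue, this in turn reduces to the Dirichlet-form bound
\begin{equ}
\varlimsup_{\epsilon \downarrow 0} \varlimsup_{N\to \infty} \sup_{f}\Big\{ \int \frac{1}{2N}\sum_{k=1}^{2N} V_{\epsilon N}(\tau_k \eta)\, f(\eta)\, \nu_N^\star(d\eta) - \frac{1}{\gamma N^\alpha}\, \cE_N(\sqrt f) \Big\} \le 0\;,
\end{equ}
where $\cE_N$ is the standard symmetric Dirichlet form on $\{0,1\}^{2N}$ and the supremum runs over $\nu_N^\star$-densities $f$. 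The asymmetric part of the generator has total intensity $(2N)^{1+\alpha}(2p_N-1) = O(N)$ with a bounded Radon--Nikodym derivative against the symmetric part, so its contribution to the variational expression is controlled by $\cE_N$ up to an error $O(N^{-\alpha})$.

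\textbf{One-block estimate.} For a mesoscopic scale $\ell = \ell(N)$ sent to infinity after $N$, I would first replace $\ccM_{T_{\epsilon N}(k)}\Phi(\eta)$ by $\ccM_{T_{\epsilon N}(k)}\bigl[\tilde\Phi(\ccM_{T_\ell(\cdot)}\eta)\bigr]$. Partitioning $T_{\epsilon N}(k)$ into disjoint subboxes of size $2\ell+1$ and using translation invariance reduces the problem to estimating $\Phi(\eta) - \tilde\Phi(\ccM_{T_\ell(0)}\eta)$ on a single box. The restriction of $\cE_N$ to a box of size $\ell$ has spectral gap of order $\ell^{-2}$, which forces any density $f$ of bounded Dirichlet energy to be locally close, on each subbox, to a canonical measure with prescribed density; on the latter, equivalence of ensembles yields $\Phi \approx \tilde\Phi$ with error $o_\ell(1)$. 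Sending $N \to \infty$ and then $\ell \to \infty$ closes this step.

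\textbf{Two-block estimate.} It remains to prove
\begin{equ}
\varlimsup_{\ell \to \infty}\varlimsup_{\epsilon \downarrow 0} \varlimsup_{N\to \infty} \E^N_{\iota_N}\Big[\int_0^t \frac{1}{2N}\sum_{k=1}^{2N}\big| \ccM_{T_\ell(k)}\eta_s - \ccM_{T_{\epsilon N}(k)}\eta_s\big|\, ds\Big] = 0\;,
\end{equ}
which, combined with the uniform continuity of $\tilde\Phi$ on $[0,1]$, yields the desired replacement. The standard route is to couple the two density averages by translating one box across the lattice and to exploit the gradient bound encoded in $\cE_N$ via the same entropy/Feynman--Kac reduction as above. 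The main obstacle is precisely to ensure that the asymmetric contribution on the accelerated time scale $(2N)^{1+\alpha}$ does not spoil the classical reversible-case estimates: since the asymmetry is of order $(2N)^{-\alpha}$, its antisymmetric effect on $\nu_N^\star$ appears in the variational formula as a cross term that can be bounded, via Cauchy--Schwarz, by $O(N^{-\alpha})$ times $\cE_N$ plus a lower-order error, and the zero-flux boundary condition prevents any macroscopic boundary current from generating additional corrections.
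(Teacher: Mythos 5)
The paper does not prove Theorem \ref{Th:Replacement} in the text: it only records that ``the proof relies on the classical one-block and two-blocks estimates'' and defers to the companion paper \cite{LabbeReview}. Your overall framework (entropy inequality, Feynman--Kac, reduction to a variational bound for densities with controlled Dirichlet form, one-block and two-blocks) is therefore the right skeleton and is consistent with what the paper indicates. The entropy bound $H(\iota_N\,|\,\nu_N^\star) \le 2N\log 2$ under Assumption \ref{Assumption:IC} is also correct, and the Feynman--Kac/variational reduction remains valid even though $\nu_N^\star$ is \emph{not} invariant for the full asymmetric dynamics, since it only requires bounding $\langle \tun, e^{t(L+V)}\tun\rangle_{L^2(\nu_N^\star)}$ by $e^{t\Lambda}$ with $\Lambda$ the top of the spectrum of $\tfrac{1}{2}(L+L^*)+V$.

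The genuine gap is in the treatment of the asymmetric part. Write $L^N = L^{S,N} + L^{A,N}$, where $L^{S,N}$ is reversible with respect to $\nu_N^\star$ and
\begin{equ}
L^{A,N}g(\eta) = (2N)^{1+\alpha}\,b_N \sum_{k=1}^{2N-1} \bigl(\eta(k+1)-\eta(k)\bigr)\bigl(g(\eta^{k,k+1})-g(\eta)\bigr)\,,\qquad b_N := p_N-\tfrac12\,.
\end{equ}
The change of variables $\eta\mapsto\eta^{k,k+1}$ and the telescoping of $\sum_k(\eta(k+1)-\eta(k))$ give the \emph{exact} identity
\begin{equ}
\int \sqrt f\, L^{A,N}\sqrt f \, d\nu_N^\star \;=\; -(2N)^{1+\alpha}\,b_N \int \bigl(\eta(2N)-\eta(1)\bigr)\, f\, d\nu_N^\star\,,
\end{equ}
so the symmetrisation $\tfrac{1}{2}(L^N+(L^N)^*)$ equals $L^{S,N}$ \emph{minus a multiplication operator} of size $(2N)^{1+\alpha}b_N\sim 2N$, supported at the two boundary sites. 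This term is \emph{not} controlled by the Dirichlet form with an $O(N^{-\alpha})$ error: if you instead apply Cauchy--Schwarz site by site you get $|\int\sqrt f\, L^{A,N}\sqrt f\, d\nu_N^\star| \lesssim N\sqrt{N\,\cE_N(\sqrt f)}$, and optimising with Young's inequality against the sped-up Dirichlet form leaves a constant of order $N^{2-\alpha}$, which, after dividing by $\gamma N$, diverges like $N^{1-\alpha}$. Moreover, your statement that ``the zero-flux boundary condition prevents any macroscopic boundary current from generating additional corrections'' has the logic reversed: it is precisely the zero-flux boundary that makes $L^{A,N}$ fail to be antisymmetric with respect to $\nu_N^\star$ and produces the $O(N)$ boundary term; on the torus this term would cancel.

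The argument can nevertheless be repaired, and this is the step you are missing: one should \emph{not} try to absorb the boundary multiplication operator into the Dirichlet form. Instead, bound it trivially by $|\eta(2N)-\eta(1)|\le 1$ and $\int f\,d\nu_N^\star = 1$, so that its contribution to $\tfrac{t}{\gamma N}\Lambda$ is at most $\tfrac{2t}{\gamma}$ (uniformly in $N$ and $\epsilon$). Combined with the entropy contribution $\tfrac{2\log 2}{\gamma}$ and the one-block/two-blocks estimate which kills the remaining variational term as $N\to\infty$ and then $\epsilon\downarrow 0$, one obtains a bound of order $1/\gamma$ that vanishes only after the final limit $\gamma\to\infty$. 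As written, your reduction to ``$\ldots\le 0$'' for every fixed $\gamma$ is too strong; the boundary term survives for fixed $\gamma$, and it is only the last limit in $\gamma$ that closes the argument.
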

The proof of this theorem relies on the classical one-block and two-blocks estimates, we refer to the companion paper~\cite{LabbeReview} for the complete proof.

\section{KPZ fluctuations}\label{Section:KPZ}

To prove Theorem \ref{Th:KPZ}, we follow the method of Bertini and Giacomin~\cite{BG97}. Due to our boundary conditions, there are two important steps that need some specific arguments: first the bound on the moments of the discrete process, see Proposition \ref{Prop:BoundMomentsKPZ}, second the bound on the error terms arising in the identification of the limit, see Proposition \ref{Prop:DelicateKPZ}. The remaining arguments then follow \textit{mutatis mutandis} so we will only state the intermediate results and omit the proofs. In order to simplify the notations, we will regularly use the microscopic variables $k,\ell \in\{1,\ldots,2N-1\}$ in rescaled quantities: for instance $h^N(t,\ell)$ stands for $h^N(t,x)$ with $x=(\ell-N)/(2N)^{2\alpha}$.\\
The proof relies on the discrete Hopf-Cole transform, which is due to G\"artner~\cite{Gartner88}. Set $\xi^N(t,x) := \exp(-h^N(t,x))$, where $h^N$ was introduced in (\ref{Eq:hN}). The stochastic differential equations solved by $\xi^N$ are given by
\begin{equs}[Eq:SDEKPZ]
	\begin{cases}
	d\xi^N(t,\ell) = c_N \Delta \xi^N(t,\ell)dt + dM^N(t,\ell)\;,\qquad  \ell\in\{1,\ldots,2N-1\}\;,\\
	\xi^N(t,0) = \xi^N(t,2N) = e^{\lambda_N t}\;,\\
	\xi^N(0,\cdot) = e^{-h^N(0,\cdot)} \;,\end{cases}
\end{equs}
where $M^N$ is a martingale with bracket given by $\langle M^N(\cdot,k),M^N(\cdot,\ell)\rangle_t = 0$ whenever $k\ne \ell$, and
\begin{equs}[Eq:Bracket]
	d \langle M^N(\cdot,k)\rangle_t &= \lambda_N\Big(\xi^N(t,k)\Delta\xi^N(t,k)+2\xi^N(t,k)^2 \Big)dt\\
	&\quad- (2N)^{4\alpha} \nabla^+\xi^N(t,k) \nabla^-\xi^N(t,k)dt\;,
\end{equs}
where we rely on the notation
\begin{equ}
\nabla^+ f(\ell):=f(\ell+1)-f(\ell)\;,\quad \nabla^- f(\ell) := f(\ell)-f(\ell-1)\;.
\end{equ}
Observe that
\begin{equ}
\big| d \langle M^N(\cdot,k)\rangle_t \big| \lesssim \xi^N(t,k)^2 (2N)^{2\alpha} dt\;,
\end{equ}
uniformly over all $t\geq 0$, all $k$ and all $N\geq 1$. As usual, we let $\cF_t,t\geq 0$ be the natural filtration associated with the process $(\xi^N(t),t\geq 0)$.

We define $B_0^N(t) := [\frac{\lambda_N}{\gamma_N}t,2N-\frac{\lambda_N}{\gamma_N}t] \subset[0,2N]$. The hydrodynamic limit obtained in Theorem \ref{Th:Hydro} shows that this is the window where the density of particles is approximately $1/2$ at time $t$ (in the time scale $(2N)^{4\alpha}$). On the left of this window, the density is approximately $1$, and on the right it is approximately $0$. For technical reasons, it is convenient to introduce an $\epsilon$-approximation of this window by setting:
\begin{equ}
B^N_\epsilon(t) := \Big[\frac{\lambda_N}{\gamma_N} t + \epsilon N, 2N-\frac{\lambda_N}{\gamma_N} t - \epsilon N \Big]\;,\quad t\in [0,T)\;.
\end{equ}

We let $p^N_t(k,\ell)$ be the discrete heat kernel on $\{0,\ldots,2N\}$ sped up by $2c_N$ and endowed with homogeneous Dirichlet boundary conditions: we refer to Appendix \ref{Appendix:Kernel} for a definition and some properties. Classical arguments ensure that the unique solution of (\ref{Eq:SDEKPZ}) is given by
\begin{equs}[Eq:DiscreteHC]
	\xi^N(t,\ell) &= I^N(t,\ell) + N^t_t(\ell)\;,
\end{equs}
where, for all $t\geq 0$, $[0,t]\ni r \mapsto N^t_r(\ell)$ is the martingale
\begin{equ}\label{Eq:DefNt}
	N^t_r(\ell) = \int_0^r \sum_{k=1}^{2N-1} p^N_{t-s}(k,\ell) dM^N(s,k)\;,
\end{equ}
and $I^N(t,\ell)$ is the term coming from the initial-boundary conditions. More precisely, we write $I^N(t,\ell) = \xi^{N,\circ}(t,\ell) + \sum_{k=1}^{2N-1} p^N_t(k,\ell)\big(\xi^N(0,k)-1\big)$ where $\xi^{N,\circ}$ is the solution of
\begin{equs}
	\begin{cases}\partial_t\xi^{N,\circ}(t,\ell) = c_N \Delta \xi^{N,\circ}(t,\ell)\;,\\
	\xi^{N,\circ}(t,0) = \xi^{N,\circ}(t,2N) = e^{\lambda_N t}\;,\\
	\xi^{N,\circ}(0,\ell) = 1\;.\end{cases}
\end{equs}
Observe that $\xi^N$ and $\xi^{N,\circ}$ satisfy the same inhomogeneous Dirichlet boundary conditions. Consequently, all the other terms satisfy homogeneous Dirichlet boundary conditions, so that we can rely on the corresponding heat kernel in their expressions.\\

We define
\begin{equ}
	b^N(t,\ell) := 2 + \exp\Big(\lambda_N t - \gamma_N \big(\ell \wedge (2N - \ell)\big)\Big)\;.
\end{equ}

\begin{remark}
The hydrodynamic limit of Theorem \ref{Th:Hydro}, upon Hopf-Cole transform, is given by $1\vee \exp\Big(\lambda_N t - \gamma_N \big(\ell \wedge (2N - \ell)\big)\Big)$.
\end{remark}

\begin{proposition}\label{Prop:IC}
Let $K$ be a compact subset of $[0,T)$ and fix $\epsilon >0$. Uniformly over all $t\in K$, we have
\begin{itemize}
\item $I^N(t,\ell) \lesssim b^N(t,\ell)$ for all $\ell\in\{1,\ldots,2N\}$,
\item $\big|\nabla^\pm I^N(t,\ell)\big| \lesssim t^{-\frac12} N^{-3\alpha}$ uniformly over all $\ell \in B^N_\epsilon(t)$.
\end{itemize}
\end{proposition}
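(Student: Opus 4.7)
The plan is to prove the two bounds by decomposing $I^N(t,\ell) = \xi^{N,\circ}(t,\ell) + J^N(t,\ell)$, where $J^N(t,\ell) := \sum_{k=1}^{2N-1} p^N_t(k,\ell)(\xi^N(0,k)-1)$, and analyzing each piece separately using a simple deterministic comparison function combined with the max principle.

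For the sup bound, I would first handle $J^N$ using the flatness of the initial condition: since $S(0,\cdot) \in \{0,1\}$, we have $\|\xi^N(0,\cdot)-1\|_\infty \leq 1 - e^{-\gamma_N} \leq \gamma_N \lesssim 1$, whence $|J^N(t,\ell)| \leq \gamma_N \lesssim 1$ using $\sum_k p^N_t(k,\ell)\leq 1$. For $\xi^{N,\circ}$, I would introduce the explicit function
\begin{equ}
\Psi(t,\ell) := e^{\lambda_N t - \gamma_N \ell} + e^{\lambda_N t - \gamma_N(2N-\ell)}\;.
\end{equ}
A direct computation using $c_N(e^{\gamma_N}-2+e^{-\gamma_N}) = \lambda_N$ shows that each summand solves the discrete heat equation $\partial_t \Psi = c_N \Delta \Psi$ exactly. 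Moreover $\Psi(t,0), \Psi(t,2N) \geq e^{\lambda_N t}$ and $\Psi(0,\ell) \leq 2$. The max principle applied to $w := \xi^{N,\circ} - \Psi - 1$, which solves the heat equation with $w(t,0), w(t,2N) \leq 0$ and $w(0,\ell) \leq 0$, yields $w \leq 0$, i.e. $\xi^{N,\circ}(t,\ell) \leq 1 + \Psi(t,\ell) \leq 1 + 2 e^{\lambda_N t - \gamma_N(\ell \wedge (2N-\ell))} \lesssim b^N(t,\ell)$. Combining gives the asserted bound $I^N(t,\ell) \lesssim b^N(t,\ell)$.

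For the gradient bound on $B^N_\epsilon(t)$, I split $\nabla^\pm I^N = \nabla^\pm \xi^{N,\circ} + \nabla^\pm J^N$. The easy term $\nabla^\pm J^N$ is controlled via the standard discrete heat-kernel smoothing estimate $\sum_k |\nabla^\pm_\ell p^N_t(k,\ell)| \lesssim (c_N t)^{-1/2}$, which I expect to be provided by Appendix \ref{Appendix:Kernel}. Since $|\xi^N(0,k)-1|\leq \gamma_N$ and $\gamma_N/\sqrt{c_N t} \sim N^{-\alpha}\cdot N^{-2\alpha}/\sqrt t$, this yields precisely the target rate $|\nabla^\pm J^N(t,\ell)| \lesssim N^{-3\alpha}/\sqrt{t}$. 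For $\nabla^\pm \xi^{N,\circ}$, the key observation is that on $B^N_\epsilon(t)$ one has $\gamma_N(\ell \wedge (2N-\ell)) \geq \lambda_N t + \gamma_N \epsilon N \sim \lambda_N t + \epsilon N^{1-\alpha}$, so $\Psi(t,\ell) \lesssim e^{-\epsilon N^{1-\alpha}}$ is super-algebraically small. Combined with the lower bound $\xi^{N,\circ} \geq 1$ (max principle again, since both initial and boundary data are $\geq 1$), this forces $|\xi^{N,\circ}(t,\ell) - 1| \leq \Psi(t,\ell) \lesssim e^{-\epsilon N^{1-\alpha}}$. The same bound holds for $\ell \pm 1$ because they still lie in a slight enlargement of the bulk (up to $O(1)$), so by the triangle inequality $|\nabla^\pm \xi^{N,\circ}(t,\ell)| \lesssim e^{-\epsilon N^{1-\alpha}}$, which is negligible compared to $N^{-3\alpha}/\sqrt{t}$ uniformly on the compact $K$.

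The main technical input from outside this proof is the Dirichlet heat-kernel gradient estimate $\sum_k|\nabla^\pm_\ell p^N_t(k,\ell)| \lesssim (c_N t)^{-1/2}$; everything else reduces to a maximum-principle comparison with $\Psi$ and careful bookkeeping of the scales $\gamma_N \sim N^{-\alpha}$, $c_N \sim N^{4\alpha}$, $\lambda_N \sim N^{2\alpha}$, together with the very definition of $B^N_\epsilon(t)$ which pushes $\Psi$ into the super-exponentially small regime. I expect no serious obstacle beyond keeping track of these scalings; the elegance of the proof is that the target exponent $-3\alpha$ is exactly $-\alpha$ (from the flat initial condition) minus $2\alpha$ (from heat smoothing at scale $\sqrt{c_N t}$), while the boundary contribution $\nabla^\pm \xi^{N,\circ}$ is killed entirely by the gap between the bulk and the exponential region.
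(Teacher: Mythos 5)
Your proposal is correct and, for the $\xi^{N,\circ}$ part, takes a genuinely different and in fact cleaner route. The paper bounds $\xi^{N,\circ}-1$ by writing the Duhamel identity $\xi^{N,\circ}(t,\ell)=1+\lambda_N\int_0^t\big(1-\sum_k p^N_{t-s}(k,\ell)\big)e^{\lambda_N s}\,ds$ and then proving the survival-probability estimate (\ref{Eq:BoundTailDiscreteKernel}) separately via a reflection principle for $\bar p^N$; it then returns a second time to the maximum principle, with $b^N$ restricted to $\{1,\ldots,N-1\}$, and needs the first estimate again to control the boundary value at $\ell=N$. You short-circuit all of this by constructing the explicit super-solution $\Psi(t,\ell)=e^{\lambda_N t-\gamma_N\ell}+e^{\lambda_N t-\gamma_N(2N-\ell)}$, which is an \emph{exact} solution on the whole lattice because each exponential is an eigenfunction of $\Delta$ and $\lambda_N=c_N(e^{\gamma_N}-2+e^{-\gamma_N})$ by definition; a single application of the maximum principle then yields $1\le\xi^{N,\circ}\le 1+\Psi$ everywhere, and on $B^N_\epsilon(t)$ the definition of the window immediately gives $\Psi\lesssim e^{-c\epsilon N^{1-\alpha}}$, which matches or beats the paper's $e^{-\delta N^{2\alpha}}$ since $\alpha\le 1/3$. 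This removes the circularity in the paper's argument and makes (\ref{Eq:BoundTailDiscreteKernel}) unnecessary for this proposition.

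The one spot where you rely on something not literally available: you invoke $\sum_k|\nabla^\pm_\ell p^N_t(k,\ell)|\lesssim (c_N t)^{-1/2}$ for the \emph{Dirichlet} kernel, but Lemma \ref{Lemma:BoundHeatKernelZ} gives this (via $\sum_k|\nabla\bar p^N_t(k)|\le 2\bar p^N_t(0)$) only for the whole-line kernel $\bar p^N$. The paper's proof bridges this by first replacing $p^N_t(k,\ell)$ with $\bar p^N_t(\ell-k)$ up to an $e^{-\delta N^{2\alpha}}$ error using Lemmas \ref{Lemma:ExpoDecay} and \ref{Lemma:DecaySeriesKernel}, and this replacement is legitimate exactly because $\ell\in B^N_\epsilon(t)$, which is the regime you work in for the gradient bound. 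So the gap closes with one sentence, but you should state the replacement explicitly rather than asserting the Dirichlet estimate as given. The $J^N$ bookkeeping ($|\xi^N(0,\cdot)-1|\le\gamma_N$, then $\gamma_N(c_Nt)^{-1/2}\sim N^{-3\alpha}t^{-1/2}$), the comparison $\Psi\lesssim b^N$, and the triangle-inequality argument for $\nabla^\pm\xi^{N,\circ}$ on a slight enlargement of the bulk are all correct.
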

\begin{proof}
Since our initial condition is flat, it is immediate to check that
\begin{equ}
\Big|I^N(t,\ell) - \xi^{N,\circ}(t,\ell)\Big| \lesssim N^{-\alpha} \ll b^N(t,\ell)\;.
\end{equ}
Furthermore, using Lemmas \ref{Lemma:ExpoDecay} and \ref{Lemma:DecaySeriesKernel}, we get
\begin{equ}
\nabla^\pm\big(I^N(t,\ell) - \xi^{N,\circ}(t,\ell)\big) = \sum_{k\in B^N_{\epsilon/2}(0)}\!\! \nabla^\pm \bar{p}^N_t(\ell-k)\big(\xi^{N}(0,k)-1\big)+ \cO(N^{1-\alpha} e^{-\delta N^{2\alpha}})\;,
\end{equ}
uniformly over all $\ell\in B^N_\epsilon(t)$, all $t\in K$ and all $N\geq 1$. Then, we write
\begin{equ}
\sum_{k\in B^N_{\epsilon/2}(0)} |\nabla^+ \bar{p}^N_t(\ell-k)| = -\bar{p}^N_t(\ell-i_- -1) + 2\bar{p}^N_t(0) - \bar{p}^N_t(\ell-i_+)\;,
\end{equ}
where $i_{\pm}$ are the first and last integers in $B^N_{\epsilon/2}(0)$, and $\bar{p}^N$ is the discrete heat kernel on $\Z$, see Appendix \ref{Appendix:Kernel}. Using Lemma \ref{Lemma:BoundHeatKernelZ} and the fact that the initial condition is flat, we deduce that
\begin{equ}
\Big|\sum_{k\in B^N_{\epsilon/2}(0)} \nabla^+ \bar{p}^N_t(\ell-k)(\xi^{N}(0,k)-1)\Big| \lesssim \frac{1}{\sqrt{t}(2N)^{3\alpha}}\;,
\end{equ}
uniformly over the same set of parameters, as required. The same arguments work for $\nabla^-$.\\
To establish the required bounds on $\xi^{N,\circ}$, we first observe that we have
\begin{equ}
\xi^{N,\circ}(t,\ell) = 1 + \lambda_N \int_0^t \Big(1-\sum_{k=1}^{2N-1} p^N_{t-s}(k,\ell) \Big) e^{\lambda_N s} ds\;.
\end{equ}
In Appendix \ref{Appendix:Kernel}, we show that there exists $\delta > 0$ such that
\begin{equ}\label{Eq:BoundTailDiscreteKernel}
\Big(1-\sum_{k=1}^{2N-1} p^N_{t-s}(k,\ell) \Big) e^{\lambda_N s} \lesssim e^{-\delta N^{2\alpha}}\;,
\end{equ}
uniformly over all $s\in[0,t]$, all $t\in K$ and all $\ell \in B^N_\epsilon(t)$. Consequently, there exists $\delta' > 0$ such that $|\xi^{N,\circ}(t,\ell) - 1| \lesssim \exp(-\delta' N^{2\alpha})$ uniformly over all $t\in K$, all $\ell\in B^N_\epsilon(t)$ and all $N\geq 1$. Using this bound, we immediately get the bound $\big|\nabla^\pm \xi^{N,\circ}(t,\ell)\big| \lesssim t^{-\frac12} N^{-3\alpha}$ as required. Furthermore, we deduce that for $N$ large enough, $b^N$ solves
\begin{equs}
	\begin{cases}
	\partial_t b^N(t,\ell) = c_N \Delta b^N(t,\ell)\;,\quad \ell\in \{1,\ldots,N-1\}\;,\\
	b^N(t,0) \geq \xi^{N,\circ}(t,0)\;,\quad b^N(t,N) \geq \xi^{N,\circ}(t,N)\;,\\
	b^N(0,k) \geq \xi^{N,\circ}(0,k)\;.\end{cases}
\end{equs}
By the maximum principle, one deduces that $b^N(t,\ell) \geq \xi^{N,\circ}(t,\ell)$ for all $t\in K$ and all $\ell\in\{0,\ldots,N\}$. By symmetry, this inequality also holds for $\ell \in \{N,\ldots,2N\}$.
\end{proof}
To alleviate the notation, we define
\begin{equ}\label{Def:qN}
	q^N_{s,t}(k,\ell) = p^N_{t-s}(k,\ell) b^N(s,k)\;.
\end{equ}
We now have all the ingredients at hand to bound the moments of $\xi^N$.

\begin{proposition}\label{Prop:BoundMomentsKPZ}
For all $n\geq 1$ and all compact set $K \subset [0,T)$, we have
\begin{equ}
	\sup_{N\geq 1} \sup_{\ell\in\{1,\ldots,2N-1\}} \sup_{t\in K} \E\Big[ \Big(\frac{\xi^N(t,\ell)}{b^N(t,\ell)}\Big)^n \Big] < \infty\;.
\end{equ}
\end{proposition}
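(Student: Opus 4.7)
\emph{Plan.} I follow the Bertini--Giacomin template \cite{BG97}: combine the mild formulation (\ref{Eq:DiscreteHC}) with Burkholder--Davis--Gundy and close by a Gronwall-type bootstrap. The specificity here lies in the normalisation by $b^N$, which must capture simultaneously the boundary layer near $\ell=0,2N$ (through the $e^{\lambda_N t}$ factor) and the size of the fluctuations in the bulk. By Lyapunov's inequality it suffices to treat even integers $n\geq 2$. For a fixed compact $K\subset [0,T)$ set
\begin{equ}
U_n(t):=\sup_{s\in K,\, s\leq t}\,\sup_{1\leq \ell\leq 2N-1}\E\Big[\big(\xi^N(s,\ell)/b^N(s,\ell)\big)^n\Big]\;,
\end{equ}
which is finite for every fixed $N$. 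The aim is a closed inequality $U_n(t)\leq C+C\,t^{n/4}\,U_n(t)$ with constants independent of $N$.

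By Proposition \ref{Prop:IC} the contribution of $I^N$ is already $\lesssim b^N$, so it only remains to estimate $\E[|N^t_t(\ell)|^n]$. BDG combined with the bracket bound \eqref{Eq:Bracket} gives
\begin{equ}
\E\big[|N^t_t(\ell)|^n\big]\lesssim_n (2N)^{n\alpha}\,\E\Big[\Big(\int_0^t\sum_k p^N_{t-s}(k,\ell)^2\,\xi^N(s,k)^2\,ds\Big)^{n/2}\Big]\;.
\end{equ}
The key reduction is Minkowski's integral inequality in $L^{n/2}$ (legitimate since $n/2\geq 1$), applied to the random integrand $\xi^N(s,k)^2$ against the deterministic weight $p^N_{t-s}(k,\ell)^2\,ds$: this bounds the right-hand side by
\begin{equ}
(2N)^{n\alpha}\Big(\int_0^t\sum_k p^N_{t-s}(k,\ell)^2\,\E\big[\xi^N(s,k)^n\big]^{2/n}\,ds\Big)^{n/2}\;.
\end{equ}
Inserting the defining bound $\E[\xi^N(s,k)^n]\leq U_n(t)\,b^N(s,k)^n$ (valid for $s\leq t$), and raising to the power $2/n$ produces $b^N(s,k)^2$ inside the integral, so the whole problem reduces to the purely deterministic $n=2$ estimate
\begin{equ}\label{Eq:PlanKey}
(2N)^{2\alpha}\int_0^t\sum_k p^N_{t-s}(k,\ell)^2\,b^N(s,k)^2\,ds\lesssim_K \sqrt{t}\,b^N(t,\ell)^2\;,
\end{equ}
from which $\E[|N^t_t(\ell)|^n]/b^N(t,\ell)^n\lesssim U_n(t)\,t^{n/4}$ follows.

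The main obstacle is the verification of \eqref{Eq:PlanKey}. To establish it I would split $b^N(s,k)^2\leq 8+2\exp(2\lambda_N s-2\gamma_N(k\wedge(2N-k)))$; the constant piece is handled by the local CLT bound $\sum_k \bar p^N_{t-s}(k)^2\lesssim(c_N(t-s))^{-1/2}$ from Appendix \ref{Appendix:Kernel}. For the exponential piece and $k<N$, using $p^N\leq \bar p^N$ and completing the square in the Gaussian approximation $\bar p^N_{t-s}(j)^2\sim (4\pi c_N(t-s))^{-1}e^{-j^2/(2c_N(t-s))}$ produces a Gaussian tilt of size $e^{2c_N\gamma_N^2(t-s)}$; the decisive expansion $2c_N\gamma_N^2=2\lambda_N+\cO(c_N\gamma_N^4)=2\lambda_N+\cO(1)$ (using $\lambda_N=4c_N\sinh^2(\gamma_N/2)$) ensures that this tilt equals $e^{2\lambda_N(t-s)}$ up to bounded factors, and combines with the $e^{2\lambda_N s-2\gamma_N\ell}$ coming from the shift $k\mapsto k-\ell$ to reconstruct the full $e^{2\lambda_N t-2\gamma_N\ell}$ present in $b^N(t,\ell)^2$. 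Together with the prefactor $(c_N(t-s))^{-1/2}$ and the scaling $c_N\sim(2N)^{4\alpha}/2$, this yields the pointwise bound $(2N)^{2\alpha}\sum_k p^N_{t-s}(k,\ell)^2 b^N(s,k)^2\lesssim b^N(t,\ell)^2/\sqrt{t-s}$, which integrates to \eqref{Eq:PlanKey}. Values of $\ell$ close to a boundary, or lying on the other side of the kink of $b^N$ at $\ell=N$, require only mild book-keeping and use the Dirichlet/reflection properties of $p^N$ recorded in Appendix \ref{Appendix:Kernel}.

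Granted \eqref{Eq:PlanKey} we obtain the announced inequality $U_n(t)\leq C_n+C_n'\,t^{n/4}\,U_n(t)$ with constants independent of $N$. On any sub-interval of $K$ of length $h$ with $C_n'h^{n/4}\leq 1/2$ this forces $U_n\leq 2C_n$ there; iterating along a finite cover of $K$, each iteration using the Markov property at the previous endpoint (so that the role of $I^N$ is played by the heat kernel convolution of $\xi^N$ at that endpoint, itself controlled inductively together with the fact that $b^N$ is essentially a solution of the discrete heat equation), extends the bound to all of $K$.
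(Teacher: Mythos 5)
Your overall plan is the same as the paper's: write $\xi^N$ via the discrete Duhamel formula \eqref{Eq:DiscreteHC}, absorb the $I^N$ term into $b^N$ via Proposition \ref{Prop:IC}, estimate the martingale term $N^t_t(\ell)$ by BDG, and close the resulting inequality. Your H\"older/Minkowski reduction to the $n=2$ kernel estimate is in substance the paper's H\"older step, and your estimate \eqref{Eq:PlanKey} is exactly what the two inequalities of Lemma~\ref{Lemma:HeatKernel} deliver when combined; your crude "Gr\"onwall" by $U_n(t)\le C+C\,t^{n/4}U_n(t)$ is a coarsened version of the paper's Volterra inequality $g^N_n(t)\lesssim 1+\int_0^t g^N_n(s)(t-s)^{-1/2}ds$, and the piecewise iteration along $K$ you describe can be made to work. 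None of that is the problem.

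The gap is in the very first line of your martingale estimate. You write
\begin{equation*}
\E\big[|N^t_t(\ell)|^n\big]\lesssim_n\ \E\Big[\big\langle N^t_\cdot(\ell)\big\rangle_t^{\,n/2}\Big]\;,
\end{equation*}
i.e.\ you bound the $n$-th moment of the martingale by the $n/2$-th moment of its \emph{predictable bracket}. For a continuous martingale this is BDG, but $N^t_\cdot(\ell)$ is a pure jump martingale (a weighted sum of the jump martingales $M^N(\cdot,k)$), and for such martingales the inequality fails for $n>2$: the BDG inequality controls moments through the optional quadratic variation $[N^t_\cdot]_t=\sum_{\tau\le t}(\Delta N^t_\tau)^2$, not through $\langle N^t_\cdot\rangle_t$. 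Passing from $[\,\cdot\,]$ to $\langle\,\cdot\,\rangle$ costs an extra term coming from the jumps (Rosenthal-type inequality), which is precisely what the paper records as $\eqref{Eq:BDG2}$ with the compensated process $D^t_r=[N^t_\cdot]_r-\langle N^t_\cdot\rangle_r$, and then controls in $\eqref{Eq:BoundQuadVar}$. That control is not cosmetic: the jumps of $\xi^N(\cdot,k)$ have size of order $\gamma_N\xi^N$, and bounding $\E\big[[D^t_\cdot(\ell)]_t^{n/2}\big]$ requires a genuine argument — the paper partitions time into blocks of length $(2N)^{-4\alpha}$, compares the number of jumps in a block to a rate-$(2N)^{4\alpha}$ Poisson process, pays exponential moments of that Poisson count, and uses the kernel bound $\sup_k q^N_{s,t}(k,\ell)^3\sum_k q^N_{s,t}(k,\ell)\lesssim b^N(t,\ell)^4(1\wedge(\sqrt{t-s}(2N)^{2\alpha})^{-1})$. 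Your proposal skips this entirely, so the closed inequality $U_n(t)\le C+Ct^{n/4}U_n(t)$ is not actually established for $n>2$.

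Everything else you outline — the Gaussian-tilt computation behind \eqref{Eq:PlanKey}, the identity $\lambda_N=4c_N\sinh^2(\gamma_N/2)$ ensuring that $e^{2c_N\gamma_N^2(t-s)}$ matches $e^{2\lambda_N(t-s)}$ up to a bounded factor, and the reconstruction of $b^N(t,\ell)$ — is correct and is subsumed in Lemma~\ref{Lemma:HeatKernel}. To repair the argument, add the quadratic variation estimate (the paper's \eqref{Eq:BoundQuadVar}); note that it ultimately yields the same $\int_0^t g^N_n(s)(t-s)^{-1/2}ds$ right-hand side, so your Gr\"onwall step goes through unchanged once that extra term is in place.
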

Since $b^N$ is of order $1$ inside $B^N_\epsilon(t)$, this ensures that the moments are themselves of order $1$ in these windows.

\begin{proof}
We fix the compact set $K$ until the end of the proof. Using the expression (\ref{Eq:DiscreteHC}) and Proposition \ref{Prop:IC}, we deduce that
\begin{equ}\label{Eq:ExpresMoments}
	\E\bigg[ \bigg(\frac{\xi^N(t,\ell)}{b^N(t,\ell)}\bigg)^{2n} \bigg]^\frac{1}{2n} \lesssim 1 + \E\bigg[ \bigg(\frac{N^t_t(\ell)}{b^N(t,\ell)}\bigg)^{2n} \bigg]^\frac{1}{2n}\;.
\end{equ}
We set $D^t_r := \big[ N^t_\cdot \big]_r - \langle N^t_\cdot \rangle_r$ and we refer to Appendix \ref{Appendix:Mgale} for the notations. By the BDG inequality (\ref{Eq:BDG2}), we obtain
\begin{equ}\label{Eq:BDGMoments}
	\E \Big[ \big(N^t_t(\ell)\big)^{2n}\Big] \lesssim \E \Big[ \big\langle N^t_\cdot(\ell)\big\rangle_t^n\Big] + \E \Big[ \big[ D^t_\cdot(\ell)\big]_t^\frac{n}{2}\Big]\;,
\end{equ}
uniformly over all $\ell \in \{1,\ldots,2N-1\}$, all $t \geq 0$, and all $N\geq 1$. Let
\begin{equ}
	g^N_n(s) := \sup_{k\in\{1,\ldots,2N-1\}} \E\bigg[ \Big(\frac{\xi^N(s,k)}{b^N(s,k)}\Big)^{2n} \bigg]\;.
\end{equ}
We claim that
\begin{equs}
	\E \Big[ \big\langle N^t_\cdot(\ell)\big\rangle_t^n\Big] &\lesssim b^N(t,\ell)^{2n}\int_0^t  \frac{g_n^N(s)}{\sqrt{t-s}} ds\;,\label{Eq:BoundBracket}\\
	\E \Big[ \big[ D^t_\cdot(\ell)\big]_t^\frac{n}{2}\Big] &\lesssim b^N(t,\ell)^{2n}\Big(1 + \int_0^t  \frac{g_n^N(s)}{\sqrt{t-s}} ds\Big)\;,\quad\label{Eq:BoundQuadVar}
\end{equs}
uniformly over all $\ell\in\{1,\ldots,2N-1\}$, all $N\geq 1$ and all $t\in K$. We postpone the proof of these two bounds. Combining these two bounds with (\ref{Eq:ExpresMoments}) and (\ref{Eq:BDGMoments}), we obtain the following closed inequality
\begin{equ}
	g^N_n(t) \lesssim 1 + \int_0^t \frac{g^N_n(s)}{\sqrt{t-s}} ds\;,
\end{equ}
uniformly over all $N\geq 1$ and all $t\in K$. By a generalised Gr\"onwall's inequality, see for instance~\cite[Lemma 6 p.33]{Haraux}, we deduce that $g^N_n(t)$ is uniformly bounded over all $N\geq 1$ and all $t\in K$.\\
We are left with establishing (\ref{Eq:BoundBracket}) and (\ref{Eq:BoundQuadVar}). Using (\ref{Eq:Bracket}), we obtain the almost sure bound
\begin{equ}
	\big\langle N^t_\cdot(\ell)\big\rangle_t \lesssim (2N)^{2\alpha}\int_0^t \sum_{k} p^N_{t-s}(k,\ell)^2 \xi^N(s,k)^2 ds\;,
\end{equ}
uniformly over all $N\geq 1$, $t\geq 0$ and $\ell\in\{1,\ldots,2N-1\}$. Recall the function $q^N$ from (\ref{Def:qN}). Using H\"older's inequality at the second line, we find
\begin{equs}
	\E \bigg[ \Big(\frac{\big\langle N^t_\cdot(\ell)\big\rangle_t}{b^N(t,\ell)^2}\Big)^n\bigg] &\lesssim \int\limits_{s_1,\ldots,s_n=0}^t \sum_{k_1,\ldots,k_n} \E\bigg[\prod_{i=1}^n (2N)^{2\alpha} \Big(\frac{q^N_{s_i,t}(k_i,\ell)}{b^N(t,\ell)}\Big)^2 \Big(\frac{\xi^N(s_i,k_i)}{b^N(s_i,k_i)}\Big)^2 \bigg] ds_i\\
	&\lesssim \int\limits_{s_1,\ldots,s_n=0}^t \sum_{k_1,\ldots,k_n} \prod_{i=1}^n (2N)^{2\alpha} \Big(\frac{q^N_{s_i,t}(k_i,\ell)}{b^N(t,\ell)}\Big)^2  g_n^N(s_i)^{\frac{1}{n}} ds_i\\
	&\lesssim \Big(\int\limits_{s=0}^t \sum_{k} (2N)^{2\alpha} \Big(\frac{q^N_{s,t}(k,\ell)}{b^N(t,\ell)}\Big)^2  g_n^N(s)^{\frac{1}{n}} ds \Big)^n\;.
\end{equs}
By the first inequality of Lemma \ref{Lemma:HeatKernel} we bound $\sum_k q^N_{s,t}(k,\ell) / b^N(t,\ell)$ by a term of order $1$, and by the second inequality of the same lemma we bound $(2N)^{2\alpha} \sup_k q^N_{s,t}(k,\ell) / b^N(t,\ell)$ by a term of order $1/\sqrt{t-s}$. Using Jensen's inequality at the second step, we thus get
\begin{equs}
	\E \bigg[ \Big(\frac{\big\langle N^t_\cdot(\ell)\big\rangle_t}{b^N(t,\ell)^2}\Big)^n\bigg] &\lesssim \Big(\int\limits_{s=0}^t \frac{g_n^N(s)^{\frac{1}{n}}}{\sqrt{t-s}} ds \Big)^n\lesssim \int_{0}^t \frac{g_n(s)}{\sqrt{t-s}} ds\;,
\end{equs}
uniformly over all $N\geq 1$, all $t\in K$ and all $\ell\in\{1,\ldots,2N-1\}$, thus yielding (\ref{Eq:BoundBracket}).\\
We turn to the quadratic variation. Let $J_k$ be the set of jump times of $\xi^N(\cdot,k)$. We start with the following simple bound
\begin{equs}
	\big[ D^t_\cdot(\ell)\big]_t &= \sum_{\tau \leq t} \sum_{k} p^N_{t-\tau}(k,\ell)^4\big(\xi^N(\tau,k)-\xi^N(\tau-,k)\big)^4\\
	&\lesssim \gamma_N^4 \sum_{k}\sum_{\tau \leq t; \tau \in J_k} q^N_{\tau,t}(k,\ell)^4\Big(\frac{\xi^N(\tau,k)}{b^N(\tau,k)}\Big)^4\;,
\end{equs}
uniformly over all $N\geq 1$, all $t\geq 0$ and all $\ell\in\{1,\ldots,2N-1\}$. We set $t_i:= i(2N)^{-4\alpha}$ for all $i\in\N$ and we let $\cI_i:=[t_i,t_{i+1})$. Then, by Minkowski's inequality we have
\begin{equ}
	\E \Big[ \big[ D^t_\cdot(\ell)\big]_t^{\frac{n}{2}}\Big]^\frac{2}{n} \lesssim \gamma_N^4\!\!\!\sum_{i=0}^{\lfloor t(2N)^{4\alpha}\rfloor}\!\!\!\sum_{k}\sup_{s\in\cI_i, s<t} q^N_{s,t}(k,\ell)^4\E \Big[ \Big( \sum_{\tau \in \cI_i \cap J_k} \Big(\frac{\xi^N(\tau,k)}{b^N(\tau,k)}\Big)^4\Big)^\frac{n}{2}\Big]^{\frac{2}{n}}\;,
\end{equ}
Let $Q(k,r,s)$ be the number of jumps of the process $\xi^N(\cdot,k)$ on the time interval $[r,s]$. We have the following almost sure bound
\begin{equ}
\xi^N(\tau,k) \leq \xi^N(s,k) e^{2(2N)^{-4\alpha} \lambda_N + 2\gamma_N Q(k,s,t_{i+1})}\;,
\end{equ}
uniformly over all $s\in \cI_{i-1}$, all $\tau \in \cI_i$, all $k\in\{1,\ldots,2N-1\}$ and all $i\geq 1$. Consequently we get
\begin{equ}
	\sum_{\tau \in \cI_i\cap J_k}\Big(\frac{\xi^N(\tau,k)}{b^N(\tau,k)}\Big)^4 \lesssim (2N)^{4\alpha}\int_{t_{i-1}}^{t_i} \Big(\frac{\xi^N(s,k)}{b^N(s,k)}\Big)^4 Q(k,s,t_{i+1})\, e^{8\gamma_N Q(k,s,t_{i+1})} ds\;,
\end{equ}
uniformly over all $N\geq 1$, all $i\geq 1$ and all $k\in\{1,\ldots,2N-1\}$. Since $(Q(k,s,t), t\geq s)$ is, conditionally given $\cF_s$, stochastically bounded by a Poisson process with rate $(2N)^{4\alpha}$, we deduce that there exists $C >0$ such that almost surely
\begin{equ}
	\sup_{N\geq 1} \sup_{i\geq 1} \sup_{s\in\cI_{i-1}} \E\Big[Q(k,s,t_{i+1})^{\frac{n}{2}}e^{4n\gamma_N Q(k,s,t_{i+1})}\,\Big|\,\cF_s\Big] < C\;.
\end{equ}
Then, we get
\begin{equs}
	{}&\E \Big[ \Big( \sum_{\tau \in \cI_i\cap J_k} \Big(\frac{\xi^N(\tau,k)}{b^N(\tau,k)}\Big)^4\Big)^\frac{n}{2}\Big]^{\frac{2}{n}}\\
	&\lesssim (2N)^{4\alpha}\int_{t_{i-1}}^{t_i} \E \Big[ \Big(\Big(\frac{\xi^N(s,k)}{b^N(s,k)}\Big)^4 Q(k,s,t_{i+1})\, e^{8\gamma_N Q(k,s,t_{i+1})}\Big)^\frac{n}{2}\Big]^{\frac{2}{n}} ds\\
	&\lesssim C (2N)^{4\alpha}\int_{t_{i-1}}^{t_i} g^N_n(s)^{\frac{2}{n}} ds\;,
\end{equs}
uniformly over all $N\geq 1$, all $i\geq 1$ and all $k$. On the other hand, when $i=0$ we have the following bound
\begin{equ}
	\E \Big[ \Big( \sum_{\tau \in \cI_0\cap J_k} \Big(\frac{\xi^N(\tau,k)}{b^N(\tau,k)}\Big)^4\Big)^\frac{n}{2}\Big]^{\frac{2}{n}} \lesssim \Big(\frac{\xi^N(0,k)}{b^N(0,k)}\Big)^4 \E\Big[Q(k,0,t_{1})^{\frac{n}{2}}e^{2n\gamma_N Q(k,0,t_{1})}\Big]\lesssim 1\;,
\end{equ}
uniformly over all $k$ and all $N\geq 1$.\\
Observe that
\begin{equ}
p^N_{t-s}(k,\ell) = e^{-2c_N(t-s)} \sum_{n\geq 0} \frac{(2c_N(t-s))^n}{n!} \mathbf{p}_n(k,\ell)\;,
\end{equ}
where $\mathbf{p}_n(k,\ell)$ is the probability that a discrete-time random walk, killed upon hitting $0$ and $2N$ and started from $k$, reaches $\ell$ after $n$ steps. Therefore, we easily deduce that $\sup_{s\in\cI_i, s<t}q^N_{s,t}(\ell,k) \lesssim q^N_{t_i,t}(\ell,k)$. Using the two bounds of Lemma \ref{Lemma:HeatKernel}, we get
\begin{equs}
	\sum_{k}\sup_{s\in\cI_i,s<t} q^N_{s,t}(k,\ell)^4 &\lesssim \sum_{k} q^N_{t_i,t}(k,\ell)^4 \lesssim \sup_{k} q^N_{t_i,t}(k,\ell)^3 \sum_{k} q^N_{t_i,t}(k,\ell)\\
	&\lesssim b^N(t,\ell)^4 \Big(1\wedge \frac{1}{\sqrt{t-t_i}\,(2N)^{2\alpha}}\Big)\;,
\end{equs}
	uniformly over all $N\geq 1$ and $i\geq 0$. Putting everything together, we obtain
	\begin{equs}
		\E \Big[ \big[ D^t_\cdot(k)\big]_t^{\frac{n}{2}}\Big]^\frac{2}{n} &\lesssim b^N(t,\ell)^4\Big( 1 + \int_0^t \frac{g^N_n(s)^\frac{2}{n}}{\sqrt{t-s}\,(2N)^{2\alpha}} ds \Big)\;,
	\end{equs}
	and the required bound follows by Jensen's inequality, thus concluding the proof.
\end{proof}

The following two lemmas control the moments of the space and time increments of the process: their proofs follow from classical techniques so that we omit the details.

\begin{lemma}\label{Lemma:IncrSpaceKPZ}
Fix $\epsilon > 0$, $\beta\in (0,1/2)$ and a compact set $K \subset [0,T)$. For any $n\geq 1$, we have
\begin{equ}
	\E\Big[ \big|\xi^N(t,\ell')-\xi^N(t,\ell)\big|^{2n} \Big]^{\frac{1}{2n}} \lesssim \Big|\frac{\ell-\ell'}{(2N)^{2\alpha}} \Big|^{\beta}\;,
\end{equ}
uniformly over all $t\in K$, all $\ell,\ell'\in B_\epsilon^N(t)$ and all $N\geq 1$.
\end{lemma}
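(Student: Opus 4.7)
The plan is to work from the mild formulation (\ref{Eq:DiscreteHC}) and split
\begin{equ}
\xi^N(t,\ell')-\xi^N(t,\ell) = \bigl(I^N(t,\ell')-I^N(t,\ell)\bigr) + \bigl(N^t_t(\ell')-N^t_t(\ell)\bigr)\;,
\end{equ}
treating the deterministic (initial-boundary) part and the stochastic (martingale) part separately. For the first piece, Proposition \ref{Prop:IC} already gives $|\nabla^{\pm}I^N(t,\ell)|\lesssim t^{-1/2}N^{-3\alpha}$ uniformly on $B^N_\epsilon(t)$, so a telescoping sum along the integer path between $\ell$ and $\ell'$ yields
\begin{equ}
|I^N(t,\ell')-I^N(t,\ell)|\lesssim |\ell-\ell'|\,t^{-1/2}(2N)^{-3\alpha}\lesssim (2N)^{-\alpha}\,\Bigl|\tfrac{\ell-\ell'}{(2N)^{2\alpha}}\Bigr|\;,
\end{equ}
uniformly over $t\in K$ (which is bounded away from $0$ when we need Hölder in space; the small-$t$ regime is handled by observing that $I^N$ is itself uniformly bounded on $B^N_\epsilon(t)$, so the Hölder bound is trivial there). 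This contribution is much better than the required $|\tfrac{\ell-\ell'}{(2N)^{2\alpha}}|^\beta$ with $\beta<1/2$.

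The main work goes into the martingale part. I would apply the Burkholder-Davis-Gundy inequality (\ref{Eq:BDG2}) exactly as in the proof of Proposition \ref{Prop:BoundMomentsKPZ}, yielding
\begin{equ}
\E\bigl[|N^t_t(\ell')-N^t_t(\ell)|^{2n}\bigr]\lesssim \E\bigl[\langle N^t_\cdot(\ell')-N^t_\cdot(\ell)\rangle_t^n\bigr]+\E\bigl[[D^t_\cdot(\ell')-D^t_\cdot(\ell)]_t^{n/2}\bigr]\;.
\end{equ}
Using the almost sure bound $d\langle M^N(\cdot,k)\rangle_s\lesssim (2N)^{2\alpha}\xi^N(s,k)^2 ds$, the bracket is controlled by
\begin{equ}
(2N)^{2\alpha}\int_0^t\sum_k\bigl(p^N_{t-s}(k,\ell')-p^N_{t-s}(k,\ell)\bigr)^2\xi^N(s,k)^2\,ds\;,
\end{equ}
and after using Hölder in the random variable and Proposition \ref{Prop:BoundMomentsKPZ} to replace $\xi^N(s,k)^2$ by $b^N(s,k)^2$ times an $O(1)$ moment, the deterministic remaining integral is bounded by a kernel quantity of the form
\begin{equ}
(2N)^{2\alpha}\int_0^t\sum_k\bigl(p^N_{t-s}(k,\ell')-p^N_{t-s}(k,\ell)\bigr)^2 b^N(s,k)^2\,ds\;.
\end{equ}
The key ingredient is the spatial Hölder bound on the discrete heat kernel proved in Appendix \ref{Appendix:Kernel}, namely that for any $\beta\in(0,1/2)$,
\begin{equ}
|p^N_{t-s}(k,\ell')-p^N_{t-s}(k,\ell)|\lesssim p^N_{t-s}(k,\ell)\wedge\Bigl(\bigl|\tfrac{\ell-\ell'}{(2N)^{2\alpha}}\bigr|^{\beta}(t-s)^{-\beta/2}\bigr)\;,
\end{equ}
combined with the on-diagonal estimate $(2N)^{2\alpha}\sup_k p^N_{t-s}(k,\ell)\lesssim(t-s)^{-1/2}$ and the mass bound $\sum_k p^N_{t-s}(k,\ell)b^N(s,k)\lesssim b^N(t,\ell)$ from Lemma \ref{Lemma:HeatKernel}. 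Interpolating between the two estimates on the kernel difference yields
\begin{equ}
(2N)^{2\alpha}\int_0^t\!\!\sum_k(p^N_{t-s}(k,\ell')-p^N_{t-s}(k,\ell))^2 b^N(s,k)^2\,ds\lesssim b^N(t,\ell)^2\Bigl|\tfrac{\ell-\ell'}{(2N)^{2\alpha}}\Bigr|^{2\beta}\;,
\end{equ}
uniformly over $t\in K$, $\ell,\ell'\in B^N_\epsilon(t)$ and $N\geq 1$, which is finite precisely because $\beta<1/2$. Taking $n$-th roots gives the Hölder rate $|\tfrac{\ell-\ell'}{(2N)^{2\alpha}}|^\beta$ for the bracket contribution.

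For the purely discontinuous part $[D^t_\cdot(\ell')-D^t_\cdot(\ell)]_t$, I would mimic the treatment in the proof of Proposition \ref{Prop:BoundMomentsKPZ}: jumps of $\xi^N(\cdot,k)$ are of size $\lesssim\gamma_N\xi^N(\cdot,k)$, they occur at rate $\lesssim(2N)^{4\alpha}$, and the chain-rule computation leading to (\ref{Eq:BoundQuadVar}) applies \emph{verbatim} with $p^N_{t-s}(k,\ell)$ replaced by $p^N_{t-s}(k,\ell')-p^N_{t-s}(k,\ell)$ in each sum. The resulting bound involves $\sum_k(p^N_{t-s}(k,\ell')-p^N_{t-s}(k,\ell))^4$ which, after using the same interpolation between the two kernel estimates, is again bounded by $b^N(t,\ell)^4|\tfrac{\ell-\ell'}{(2N)^{2\alpha}}|^{4\beta}(1\wedge(t-s)^{-1/2}(2N)^{-2\alpha})$, so the quadratic variation contribution is of the same order as the bracket. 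Since $b^N(t,\ell)$ is of order $1$ inside $B^N_\epsilon(t)$, combining everything concludes the proof. The main technical obstacle is the spatial Hölder estimate for the discrete heat kernel $p^N$ with Dirichlet boundary conditions: all other ingredients are already established in the paper.
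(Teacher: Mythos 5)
Your overall strategy is the right one: decompose via the mild formulation (\ref{Eq:DiscreteHC}), treat the deterministic $I^N$ via Proposition \ref{Prop:IC}, and run the BDG machinery from the proof of Proposition \ref{Prop:BoundMomentsKPZ} for the martingale part, reducing everything to deterministic heat-kernel estimates. This is exactly the ``classical technique'' the paper alludes to (the paper itself omits the proof and defers to the companion paper). However, the key step — your claimed spatial Hölder estimate on the kernel — does not work as stated, and it is also not actually in Appendix \ref{Appendix:Kernel}. Your bound
\begin{equ}
|p^N_{t-s}(k,\ell')-p^N_{t-s}(k,\ell)|\lesssim p^N_{t-s}(k,\ell)\wedge\Bigl(\bigl|\tfrac{\ell-\ell'}{(2N)^{2\alpha}}\bigr|^{\beta}(t-s)^{-\beta/2}\Bigr)
\end{equ}
is too weak: if you plug it into the bracket and use $\sum_k p^N_{t-s}(k,\ell) b^N(s,k)\lesssim b^N(t,\ell)$ for the remaining factor, the prefactor $(2N)^{2\alpha}$ in the bracket survives and you obtain $(2N)^{2\alpha}\bigl|\tfrac{\ell-\ell'}{(2N)^{2\alpha}}\bigr|^{\beta}$, which diverges with $N$. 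The estimate one actually needs is obtained by interpolating the two bounds from Lemma \ref{Lemma:BoundHeatKernelZ} — namely the on-diagonal bound $\bar p^N_{t-s}\lesssim ((t-s)c_N)^{-1/2}$ and the gradient (telescoping) bound $|\nabla \bar p^N_{t-s}|\lesssim ((t-s)c_N)^{-1}$ — with weights $1-2\beta$ and $2\beta$, giving
\begin{equ}
\sup_k|p^N_{t-s}(k,\ell')-p^N_{t-s}(k,\ell)|\lesssim |\ell-\ell'|^{2\beta}\bigl((t-s)c_N\bigr)^{-(1+2\beta)/2}\;.
\end{equ}
Only now does the $(t-s)$-integral converge (precisely because $\beta<1/2$), the extra $c_N^{-(1+2\beta)/2}\sim (2N)^{-2\alpha-4\alpha\beta}$ cancels both the $(2N)^{2\alpha}$ prefactor and the normalization $(2N)^{-4\alpha\beta}$, and one recovers the desired $\bigl|\tfrac{\ell-\ell'}{(2N)^{2\alpha}}\bigr|^{2\beta}$. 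Your formula misses the extra $(t-s)^{-1/2}(2N)^{-2\alpha}$ factor, and the $\wedge$ structure does not fix this.

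Two further, smaller remarks. First, the reduction to the bulk $B^N_{\epsilon/2}(s)$, needed to replace $\xi^N(s,k)^2$ by $b^N(s,k)^2$ and then treat $b^N\simeq 1$, requires invoking Lemma \ref{Lemma:ExpoDecay} and Lemma \ref{Lemma:DecaySeriesKernel} to discard the contribution from $k$ outside the bulk; this is implicit in your plan but should be stated. Second, your handling of small $t$ in the deterministic part is not quite the right argument: being ``uniformly bounded'' alone does not give Hölder continuity in $\ell$ for small $|\ell-\ell'|$. The correct observation is that with the flat initial condition, $I^N(t,\ell)-1=\cO(N^{-\alpha})$ uniformly on $B^N_\epsilon(t)$, so $|I^N(t,\ell')-I^N(t,\ell)|\lesssim N^{-\alpha}\lesssim (2N)^{-2\alpha\beta}\leq \bigl|\tfrac{\ell-\ell'}{(2N)^{2\alpha}}\bigr|^{\beta}$ for $|\ell-\ell'|\geq 1$, using $2\beta<1$. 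With these corrections the plan goes through.
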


\begin{lemma}\label{Lemma:IncrTimeKPZ}
Fix $\epsilon > 0$, $\beta\in(0,1/4)$ and a compact set $K \subset [0,T)$. For any $n\geq 1$, we have
\begin{equ}
	\E\Big[ \big|\xi^N(t',\ell)-\xi^N(t,\ell)\big|^{2n} \Big]^{\frac{1}{2n}} \lesssim |t'-t|^\beta + \frac{1}{(2N)^{\alpha}}\;,
\end{equ}
uniformly over all $N\geq 1$, all $t<t'\in K$ and all $\ell\in B_\epsilon^N(t')$.
\end{lemma}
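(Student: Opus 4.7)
The plan is to adapt the proof of Proposition \ref{Prop:BoundMomentsKPZ}, exploiting the mild formulation (\ref{Eq:DiscreteHC}) at both times. First I would write
\begin{equ}
\xi^N(t',\ell) - \xi^N(t,\ell) = \big(I^N(t',\ell) - I^N(t,\ell)\big) + \big(N^{t'}_{t'}(\ell) - N^t_t(\ell)\big)\;,
\end{equ}
and treat the two contributions separately. For the deterministic piece, the decomposition $I^N = \xi^{N,\circ} + \sum_k p^N_t(k,\cdot)(\xi^N(0,k)-1)$ together with the flatness of the initial condition (which gives $|\xi^N(0,k)-1| \lesssim (2N)^{-\alpha}$), the conservation $\sum_k p^N_u(k,\ell) \leq 1$, and the exponential smallness of $\xi^{N,\circ}-1$ on $B^N_\epsilon(t')$ established in the proof of Proposition \ref{Prop:IC}, yields $|I^N(t',\ell) - I^N(t,\ell)| \lesssim (2N)^{-\alpha}$ uniformly over the relevant parameters. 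This accounts for the second term on the right-hand side of the claim.

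For the martingale part I would further split
\begin{equ}
N^{t'}_{t'}(\ell) - N^t_t(\ell) = \underbrace{\int_t^{t'}\sum_k p^N_{t'-s}(k,\ell)\,dM^N(s,k)}_{A} + \underbrace{\int_0^t \sum_k \big(p^N_{t'-s}(k,\ell)-p^N_{t-s}(k,\ell)\big)\,dM^N(s,k)}_{B}\;,
\end{equ}
apply the BDG inequality (\ref{Eq:BDG2}) to the $2n$-th moments of $A$ and $B$, and reduce to bounds on the predictable bracket and on the jump-square functional $[D]$, exactly as in Proposition \ref{Prop:BoundMomentsKPZ}. For $A$, the bracket $\int_t^{t'} (2N)^{2\alpha}\sum_k p^N_{t'-s}(k,\ell)^2 \xi^N(s,k)^2 ds$ is controlled, using Lemma \ref{Lemma:HeatKernel} and the uniform moment bound of Proposition \ref{Prop:BoundMomentsKPZ} (combined via Minkowski as in the earlier proof), by $\int_t^{t'} (t'-s)^{-1/2} ds \lesssim \sqrt{t'-t}$; the $[D]$ piece is handled with the same partition of $[t,t']$ into intervals of length $(2N)^{-4\alpha}$. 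For $B$, the analogous computation reduces everything to the key estimate
\begin{equ}\label{Eq:TimeRegPlan}
(2N)^{2\alpha}\sum_k \big|p^N_{t'-s}(k,\ell) - p^N_{t-s}(k,\ell)\big|^2 \lesssim \frac{(t'-t)^{2\beta}}{(t-s)^{1/2+2\beta}}\;,
\end{equ}
whose right-hand side is integrable in $s\in[0,t]$ precisely when $\beta<1/4$, yielding after integration the required contribution $\lesssim (t'-t)^{2\beta}$.

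The main obstacle is establishing (\ref{Eq:TimeRegPlan}) for the discrete Dirichlet heat kernel at points $\ell\in B^N_\epsilon(t')$ well inside the bulk. I would derive it by writing $p^N_{t'-s}(k,\ell) - p^N_{t-s}(k,\ell) = \int_{t-s}^{t'-s} c_N \Delta_k p^N_u(k,\ell)\,du$ and interpolating between the $\ell^\infty$ and $\ell^1$ bounds on $p^N_u$ and $\Delta p^N_u$, which follow from the estimates of Appendix \ref{Appendix:Kernel} and from the approximation of $p^N$ by its $\Z$-counterpart $\bar p^N$; the boundary corrections in this approximation are exponentially small for $\ell$ in the bulk and do not affect the estimate. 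All remaining Gr\"onwall-type bookkeeping and the treatment of the jump-square functional $[D]$ for $B$ follow the same scheme as in Proposition \ref{Prop:BoundMomentsKPZ}, with time-regularity estimates used in place of the pure size bounds.
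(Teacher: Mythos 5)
The paper omits this proof, deferring to classical techniques and the companion paper, so there is no in-text argument to compare against; but your proposal follows the natural route, namely the scheme of Proposition~\ref{Prop:BoundMomentsKPZ} combined with time-regularity estimates for the heat kernel, and the main steps are sound. The split into $I^N$-difference, $A$ and $B$, the deterministic bound $|I^N(t',\ell)-I^N(t,\ell)|\lesssim (2N)^{-\alpha}$ coming from the flat initial data and the exponential smallness of $\xi^{N,\circ}-1$ in the bulk, the bound $\sqrt{t'-t}$ for $\langle A\rangle$, and the key estimate \eqref{Eq:TimeRegPlan} leading to $(t'-t)^{2\beta}$ with the integrability condition $\beta<1/4$ are all correct.

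One point deserves more care than you give it. To prove \eqref{Eq:TimeRegPlan} by differentiating in time and interpolating, you need bounds of the form $\sup_k|\Delta\bar p^N_u(k)|\lesssim (uc_N)^{-3/2}$ and $\sum_k|\Delta\bar p^N_u(k)|\lesssim (uc_N)^{-1}$; these are standard local-CLT estimates, but they are \emph{not} among those recorded in Lemma~\ref{Lemma:BoundHeatKernelZ} (which controls only $\bar p^N$ and $\nabla\bar p^N$), and the lossy bounds $\sup_k|\Delta\bar p^N_u|\leq 2\sup_k|\nabla\bar p^N_u|\lesssim (uc_N)^{-1}$, $\sum_k|\Delta\bar p^N_u|\leq 2\sum_k|\nabla\bar p^N_u|\lesssim (uc_N)^{-1/2}$ that follow directly from the appendix are \emph{not} sufficient to get \eqref{Eq:TimeRegPlan}. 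You should either state and prove the sharper Laplacian estimates, or (more cleanly) obtain \eqref{Eq:TimeRegPlan} directly via Plancherel: writing $\sum_k|\bar p^N_{a'}(k)-\bar p^N_a(k)|^2 = \frac{1}{2\pi}\int_{-\pi}^{\pi}\big|e^{-2c_Na'(1-\cos\theta)}-e^{-2c_Na(1-\cos\theta)}\big|^2d\theta$ and using $|e^{-a'\lambda}-e^{-a\lambda}|\leq \min(h\lambda,1)\,e^{-a\lambda}\leq (h\lambda)^{\beta}e^{-a\lambda}$ gives the bound $\lesssim (c_Na)^{-1/2}(h/a)^{2\beta}$ after rescaling, which is exactly what you need; the Dirichlet kernel $p^N$ is then recovered from $\bar p^N$ using Lemmas~\ref{Lemma:ExpoDecay} and~\ref{Lemma:DecaySeriesKernel}, as you indicate. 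Finally, your treatment of the jump-square functional $[D]$ for $B$ is only gestured at; it does go through along the lines of Proposition~\ref{Prop:BoundMomentsKPZ}, producing an extra $(2N)^{-\alpha}$ which is absorbed by the second term of the statement, but it would be worth writing out since the required combination of $\ell^1$ and $\ell^\infty$ bounds on the kernel difference is exactly where the imprecision above would bite.
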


\noindent Then, we get the following result.

\begin{proposition}
Fix $t_0 \in [0,T)$. The sequence $\xi^N$ is tight in $\bbD([0,t_0],\cC(\R))$, and any limit is continuous in time.
\end{proposition}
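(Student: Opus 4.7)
The plan is to deduce tightness from the classical Kolmogorov--Chentsov criterion applied jointly in space and time to the moment bounds of Proposition~\ref{Prop:BoundMomentsKPZ} and Lemmas~\ref{Lemma:IncrSpaceKPZ}--\ref{Lemma:IncrTimeKPZ}. First I would extend $\xi^N(t,\cdot)$ from the discrete lattice to a function in $\cC(\R)$ by linear interpolation between consecutive lattice values (and by the boundary value $e^{\lambda_N t}$ outside $[0,2N]$ in the $\ell$-variable). Since $\cC(\R)$ carries the topology of uniform convergence on compacts, tightness in $\bbD([0,t_0],\cC(\R))$ reduces, by a standard diagonal argument, to tightness in $\bbD([0,t_0],\cC([-A,A]))$ for every $A>0$.

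Fix $A>0$. The step I expect to be most delicate is checking that the rescaled spatial window $x\in[-A,A]$, corresponding to $\ell = N + x(2N)^{2\alpha}\in[N-A(2N)^{2\alpha},N+A(2N)^{2\alpha}]$, lies inside some $B^N_{\epsilon}(t)$ uniformly in $t\in[0,t_0]$ and for all $N$ large. Since $\lambda_N/\gamma_N \sim (2N)^{3\alpha}$, for $\alpha<1/3$ we have $(\lambda_N/\gamma_N)\,t_0 = o(N)$, so the bound is immediate; for the critical $\alpha=1/3$ one has $(\lambda_N/\gamma_N)\,t_0 \sim 2N t_0 \leq N(1-\eta)$ with $\eta=1-2t_0>0$ because $t_0<T=1/2$, while $A(2N)^{2/3}\ll N$. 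In both cases there exists $\epsilon>0$ and $N_0$ such that, for all $N\geq N_0$ and $t\in[0,t_0]$, the relevant $\ell$'s belong to $B^N_{\epsilon}(t)$. This is precisely the regime in which Lemmas~\ref{Lemma:IncrSpaceKPZ} and~\ref{Lemma:IncrTimeKPZ} apply.

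Combining these two lemmas then yields, for every $n\geq 1$ and every $\beta_1\in(0,1/4)$, $\beta_2\in(0,1/2)$,
\begin{equ}
\E\bigl[|\xi^N(t,x)-\xi^N(s,y)|^{2n}\bigr]^{1/(2n)} \lesssim |t-s|^{\beta_1} + |x-y|^{\beta_2} + (2N)^{-\alpha}\;,
\end{equ}
uniformly over $s,t\in[0,t_0]$, $x,y\in[-A,A]$ and $N\geq N_0$. Choosing $n$ large enough that $2n\beta_1>1$ and $2n\beta_2>1$, the Kolmogorov--Chentsov criterion (applied to the two-parameter process on $[0,t_0]\times[-A,A]$) produces a uniform modulus of continuity in $(t,x)$, and together with the pointwise moment bound of Proposition~\ref{Prop:BoundMomentsKPZ} this gives tightness of the laws of $\xi^N$ in $\cC([0,t_0]\times[-A,A])$, hence in $\cC([0,t_0],\cC([-A,A]))\subset \bbD([0,t_0],\cC([-A,A]))$. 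Letting $A\to\infty$ along a sequence and using a diagonal extraction produces tightness in $\bbD([0,t_0],\cC(\R))$. Finally, since the error term $(2N)^{-\alpha}$ in the time-increment bound vanishes as $N\to\infty$, the limiting H\"older modulus in time is inherited by every limit point; in particular, any such limit belongs to $\cC([0,t_0],\cC(\R))$ and is therefore continuous in time.
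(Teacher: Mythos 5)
The key difficulty you have not addressed is that $\xi^N$ is a \emph{jump} process in time: each $\xi^N(\cdot,k)$ is piecewise constant with jumps of size of order $\gamma_N \sim (2N)^{-\alpha}$, so $\xi^N$ does not even live in $\cC([0,t_0]\times[-A,A])$, and a fortiori its law cannot be tight there. The trace of this fact is precisely the additive error term $(2N)^{-\alpha}$ in Lemma~\ref{Lemma:IncrTimeKPZ}: a moment bound of the form $|t-s|^{\beta_1}+|x-y|^{\beta_2}+(2N)^{-\alpha}$ does \emph{not} feed into Kolmogorov--Chentsov, because for $|t-s|$ below the scale set by $(2N)^{-\alpha}$ the bound is swamped by the constant term and yields no modulus of continuity at fixed $N$. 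Your remark that ``the error term vanishes as $N\to\infty$, so the limiting H\"older modulus is inherited'' conflates two distinct things: properties of limit points (which is fine) versus tightness of the prelimit laws (which is what is actually at stake and is not supplied by your argument).

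The paper circumvents this exactly as in Bertini--Giacomin: one introduces the piecewise-\emph{linear in time} interpolation $\bar\xi^N$, defined at the microscopic time mesh $(2N)^{-4\alpha}$. For $\bar\xi^N$ one can remove the additive error and obtain genuine uniform H\"older moment bounds in $(t,x)$, hence tightness of $\bar\xi^N$ in $\cC([0,t_0],\cC([-A,A]))$ by Kolmogorov--Chentsov; one then shows separately (as in Lemma~4.7 of~\cite{BG97}) that $\|\xi^N-\bar\xi^N\|_\infty \to 0$ in probability on compacts, since the jump sizes of $\xi^N$ vanish. Combining these two facts gives tightness of $\xi^N$ in $\bbD$ together with continuity of every limit point. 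Your window analysis showing that $[N-A(2N)^{2\alpha},N+A(2N)^{2\alpha}]\subset B^N_\epsilon(t)$ for all $t\le t_0<T$ is correct and is indeed needed, but the central step — the interpolation $\bar\xi^N$ and the comparison $\xi^N-\bar\xi^N\to 0$ — is missing, and without it the Kolmogorov--Chentsov step does not go through.
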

\begin{proof}
One introduces a piecewise linear time-interpolation $\bar{\xi}^N$ of our process $\xi^N$, namely we set $t_N :=\lfloor t(2N)^{4\alpha}\rfloor$ and
\begin{equ}
	\bar{\xi}^N (t,\cdot) := \big(t_N+1-t(2N)^{4\alpha}\big)\xi^N\Big(\frac{t_N}{(2N)^{4\alpha}},\cdot\Big) + \big(t(2N)^{4\alpha}-t_N\big)\xi^N\Big(\frac{t_N+1}{(2N)^{4\alpha}},\cdot\Big)\;.
\end{equ}
Using Lemmas \ref{Lemma:IncrSpaceKPZ} and \ref{Lemma:IncrTimeKPZ}, it is simple to show that the space-time H\"older semi-norm of $\bar{\xi}^N$ on compact sets of $[0,T)\times\R$ has finite moments of any order, uniformly over all $N\geq 1$. Additionally, the proof of Lemma 4.7 in~\cite{BG97} carries through, and ensures that $\xi^N-\bar{\xi}^N$ converges to $0$ uniformly over compact sets of $[0,T)\times\R$ in probability. All these arguments provide the required control on the space-time increments of $\xi^N$ to ensure its tightness, following the calculation below Proposition 4.9 in~\cite{BG97}.
\end{proof}

We use the notation $\langle f,g \rangle$ to denote the inner product of $f$ and $g$ in $L^2(\R)$. Similarly, for all maps $f,g:[0,2N] \rightarrow \R$, we set
\begin{equ}
\langle f , g \rangle_N := \frac{1}{(2N)^{2\alpha}} \sum_{k=1}^{2N-1} f(k)g(k)\;.
\end{equ}

To conclude the proof of Theorem \ref{Th:KPZ}, it suffices to show that any limit point $\xi$ of a converging subsequence of $\xi^N$ satisfies the following martingale problem (see Proposition 4.11 in~\cite{BG97}).

\begin{definition}[Martingale problem]
Let $(\xi(t,x),t\in [0,T), x\in \R)$ be a continuous process satisfying the following two conditions. Let $t_0\in [0,T)$. First, there exists $a > 0$ such that
\begin{equ}
\sup_{t\leq t_0}\sup_{x\in \R} e^{-a|x|} \E\big[\xi(t,x)^2\big] < \infty\;.
\end{equ}
Second, for all $\varphi\in\cC^\infty_c(\R)$, the processes
\begin{equs}
M(t,\varphi) &:= \langle \xi(t),\varphi\rangle - \langle \xi(0),\varphi\rangle - \frac12 \int_0^t \langle \xi(s),\varphi''\rangle ds\;,\\
L(t,\varphi) &:= M(t,\varphi)^2 - 4 \int_0^t \langle \xi(s)^2,\varphi^2\rangle ds\;,
\end{equs}
are local martingales on $[0,t_0]$. Then, $\xi$ is a solution of (\ref{mSHE}) on $[0,T)$.
\end{definition}
The first condition is a simple consequence of Proposition \ref{Prop:BoundMomentsKPZ}. To prove that the second condition is satisfied, we introduce the discrete analogues of the above processes. For all $\varphi\in\cC^\infty_c(\R)$, the processes
\begin{equs}
M^N(t,\varphi) = \langle \xi^N(t),\varphi\rangle_N - \langle \xi^N(0),\varphi\rangle_N - \frac12 \int_0^t \langle \xi^N(s),(2N)^{4\alpha} \Delta \varphi\rangle_N ds\;,\\
L^N(t,\varphi) = M^N(t,\varphi)^2 - \frac{2\lambda_N}{(2N)^{2\alpha}} \int_0^t \langle \xi^N(s)^2,\varphi^2\rangle_N ds + R^N_1(t,\varphi) + R^N_2(t,\varphi)\;,
\end{equs}
are martingales, where
\begin{equs}
R^N_1(t,\varphi) &:= -\frac{\lambda_N}{(2N)^{2\alpha}}\int_0^t \langle \xi^N(s) \Delta \xi^N(s) , \varphi^2 \rangle_N ds\;,\\
R^N_2(t,\varphi) &:= (2N)^{2\alpha} \int_0^t \langle \nabla^+\xi^N(s) \nabla^- \xi^N(s) , \varphi^2 \rangle_N ds\;.
\end{equs}
If we show that $R^N_1(t,\varphi)$ and $R^N_2(t,\varphi)$ vanish in probability when $N\rightarrow\infty$, then passing to the limit on a converging subsequence, we easily deduce that the martingale problem above is satisfied. Below, we will be working on $[N-A(2N)^{2\alpha},N+A(2N)^{2\alpha}]$ where $A$ is a large enough value such that $[-A,A]$ contains the support of $\varphi$. The moments of $\xi^N$ on this interval are of order $1$ thanks to Proposition \ref{Prop:BoundMomentsKPZ}. Since $|\Delta \xi^N| \lesssim \gamma_N \xi^N$, we have
\begin{equ}
\E\big[ |R^N_1(t,\varphi)| \big] \lesssim \gamma_N \int_0^t \frac{1}{(2N)^{2\alpha}} \sum_k \varphi^2\Big(\frac{k-N}{(2N)^{2\alpha}}\Big) ds \lesssim \gamma_N\;,
\end{equ}
so that $R^N_1(t,\varphi)$ converges to $0$ in probability as $N\rightarrow\infty$. To prove that $R^N_2$ converges to $0$ in probability, it suffices to apply the following delicate estimate which is the analogue of Lemma 4.8 in~\cite{BG97}.

\begin{proposition}\label{Prop:DelicateKPZ}
There exists $\kappa > 0$ such that for all $A>0$, we have
\begin{equ}
\E\Big[\big|\E\big[\nabla^+ \xi^N(t,\ell) \nabla^- \xi^N(t,\ell) \,|\,\cF_s\big]\big|\Big] \lesssim \frac{1}{(2N)^{2\alpha+\kappa}\sqrt{t-s}}\;,
\end{equ}
uniformly over all $\ell\in[N-A(2N)^{2\alpha},N+A(2N)^{2\alpha}]$, all $s<t$ in a compact set of $[N^{-\alpha},T)$ and all $N\geq 1$.
\end{proposition}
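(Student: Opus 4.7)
My plan is to adapt the strategy of Bertini--Giacomin (Lemma~4.8 of~\cite{BG97}) to the inhomogeneous Dirichlet boundary conditions of~(\ref{Eq:SDEKPZ}).

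The first step is to apply the Duhamel formula on $[s,t]$ in place of $[0,t]$, obtaining a decomposition
$$\xi^N(t,\ell) = X^s(t,\ell) + Y^s(t,\ell),$$
where
$$X^s(t,\ell) := \sum_{k=1}^{2N-1} p^N_{t-s}(k,\ell)\,\xi^N(s,k) + \Psi^N(s,t,\ell)$$
is $\cF_s$-measurable (the term $\Psi^N(s,t,\cdot)$ playing, for the interval $[s,t]$, the role that $\xi^{N,\circ}$ plays for $[0,t]$), and
$$Y^s(t,\ell) := \int_s^t \sum_{k=1}^{2N-1} p^N_{t-r}(k,\ell)\, dM^N(r,k)$$
is a $(\cF_r)_{r\geq s}$-martingale with $Y^s(s,\ell)=0$. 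Taking the conditional expectation of the product $(\nabla^+X^s+\nabla^+Y^s)(\nabla^-X^s+\nabla^-Y^s)$ given $\cF_s$ kills the two cross terms, since $\nabla^\pm X^s$ is $\cF_s$-measurable and $\E[\nabla^\pm Y^s(t,\ell)\,|\,\cF_s]=0$, leaving
$$\E[\nabla^+\xi^N(t,\ell)\nabla^-\xi^N(t,\ell)\,|\,\cF_s] = \nabla^+X^s\,\nabla^-X^s + \E[\nabla^+Y^s\,\nabla^-Y^s\,|\,\cF_s].$$

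The next step is to bound each of these two contributions separately. For $\nabla^+X^s\,\nabla^-X^s$ I would rely on gradient estimates for the discrete heat kernel (of Nash--Aronson type, as developed in Appendix~\ref{Appendix:Kernel}), together with the uniform moment bounds of Proposition~\ref{Prop:BoundMomentsKPZ}; the piece coming from $\nabla^\pm\Psi^N$ is exponentially small inside the window $B^N_\epsilon(t)$ by arguments analogous to those used in Proposition~\ref{Prop:IC}. For the bracket term, the pairwise orthogonality of the martingales $M^N(\cdot,k)$ combined with It\^o's isometry yields
$$\E[\nabla^+Y^s(t,\ell)\,\nabla^-Y^s(t,\ell)\,|\,\cF_s] = \int_s^t \sum_k \nabla^+_\ell p^N_{t-r}(k,\ell)\,\nabla^-_\ell p^N_{t-r}(k,\ell)\,\E\!\left[\tfrac{d}{dr}\langle M^N(\cdot,k)\rangle_r\,\bigg|\,\cF_s\right]\,dr,$$
and the explicit formula~(\ref{Eq:Bracket}), combined once more with Proposition~\ref{Prop:BoundMomentsKPZ}, reduces the task to bounding $\lambda_N \int_s^t \sum_k |\nabla^+_\ell p^N_{t-r}\,\nabla^-_\ell p^N_{t-r}|\,dr$.

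The main obstacle is extracting the strict polynomial gain $(2N)^{-\kappa}$ beyond the naive diffusive bound. A direct Cauchy--Schwarz estimate on the gradient sums only produces an upper bound of the form $[(t-s)(2N)^{4\alpha}]^{-1}$, which degrades precisely when $\sqrt{t-s}$ approaches the microscopic scale $(2N)^{-2\alpha}$. To obtain $\kappa > 0$, I would iterate the Duhamel decomposition once more, replacing $\xi^N(s,k)$ by $X^{s'}(s,k)+Y^{s'}(s,k)$ for an intermediate time $s' = s-\delta$ with $(2N)^{-4\alpha} \ll \delta \ll t-s$: this transfers one factor of $1/[\sqrt{t-s}\,(2N)^{2\alpha}]$ onto an additional smoothing factor of order $1/[\sqrt{\delta}\,(2N)^{2\alpha}]$ in $\nabla^\pm X^s$, and an analogous two-scale splitting, together with the identity $\nabla^+_\ell p\,\nabla^-_\ell p = (\nabla^{\mathrm{sym}}_\ell p)^2 - \tfrac14(\Delta_\ell p)^2$ to beat the crude bound, handles the bracket integral. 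The hypothesis $s \geq N^{-\alpha}$ is precisely what guarantees that such an intermediate time $s'$ always exists, and a careful optimisation of $\delta$ produces the explicit $\kappa > 0$.
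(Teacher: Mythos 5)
Your opening step --- the Duhamel decomposition of $\xi^N$, the vanishing of the cross terms upon conditioning on $\cF_s$, and the It\^o-isometry expression for the bracket contribution --- is the same as the paper's; whether one anchors the decomposition at time $0$ (the paper's $\xi^N = I^N + N^t_\cdot$, with $N^t_s(\ell)=\E[N^t_t(\ell)\,|\,\cF_s]$) or at time $s$ (your $X^s + Y^s$) is purely a matter of bookkeeping.

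The genuine gap concerns the source of the gain $(2N)^{-\kappa}$, and there are two distinct issues. First, the bracket~(\ref{Eq:Bracket}) contains the term $-(2N)^{4\alpha}\nabla^+\xi^N\nabla^-\xi^N$, which is precisely (a shifted version of) the quantity you are estimating, so it cannot simply be absorbed into $\lambda_N(\xi^N)^2$ and ``reduced to bounding $\lambda_N\int_s^t\sum_k|\nabla^+p^N\nabla^-p^N|\,dr$'' if one wants $\kappa>0$: the crude pointwise bound $|\nabla^\pm\xi^N|\lesssim\gamma_N\xi^N$ gives $\lambda_N\int_s^t\sum_k|K^N_{t-r}(k,\ell)|\,dr\lesssim\lambda_N/c_N\sim(2N)^{-2\alpha}$ for $t-s$ of order one, with no room for $\kappa$. (The regime where the crude bound just fails is $t-s$ of order one, not $t-s\sim(2N)^{-4\alpha}$.) The paper instead isolates the $\nabla^+\xi^N\nabla^-\xi^N$ piece and derives a Gr\"onwall-type inequality~(\ref{Eq:fN}) for $f^N_s(t,\ell)$, iterated with the contraction constant $\beta<1$ of Lemma~\ref{Lemma:BoundK}, so that the whole burden rests on the inhomogeneity $D^N_s$, bounded in Lemma~\ref{Lemma:BoundDN}. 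Second, the two-scale idea you propose does not produce the needed cancellation: $X^{s'}(s,k)$ and $Y^{s'}(s,k)$ are both $\cF_s$-measurable, so substituting them inside $\nabla^\pm X^s$ does not decouple anything further under $\E[\,\cdot\,|\,\cF_s]$, and replacing $t-s$ by $t-s'>t-s$ only degrades the heat-kernel gradient estimates; the identity $\nabla^+p\,\nabla^-p=(\nabla^{\mathrm{sym}}p)^2-\tfrac14(\Delta p)^2$ is algebraically correct but both summands are of the same size, so it does not yield an extra power of $N$. What actually produces $\kappa>0$ in the paper is the treatment of $D^{N,3}_s$: for the $\lambda_N(\xi^N)^2$ contribution one writes $\xi^N(r,k)^2=\xi^N(t,\ell)^2+(\xi^N(r,k)^2-\xi^N(t,\ell)^2)$, integrates $\int_s^t\sum_k\bar K^N_{t-r}(k,\ell)\,dr$ \emph{with its signs} by parts so that it telescopes to $\int_{t-s}^\infty\sum_k\bar K^N_r(k,\ell)\,dr$ (much smaller than the absolute-value bound), and controls the fluctuating part with the H\"older moment estimates of Lemmas~\ref{Lemma:IncrSpaceKPZ} and~\ref{Lemma:IncrTimeKPZ}. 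This cancellation-plus-regularity argument is what your proposal is missing. Finally, the restriction $s,t\in[N^{-\alpha},T)$ is not there to guarantee the existence of an intermediate time; it is used through Proposition~\ref{Prop:IC} to turn $|\nabla^\pm I^N(t,\ell)|\lesssim t^{-1/2}N^{-3\alpha}$ into an effective bound, so that $D^{N,1}_s$ is small.
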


\noindent To prove this proposition, we need to collect some preliminary results. Recall the decomposition (\ref{Eq:DiscreteHC}). If we set
\begin{equ}
K^N_{t-r}(k,\ell) := \nabla^+ p^N_{t-r}(k,\ell) \nabla^- p^N_{t-r}(k,\ell)\;,
\end{equ}
(here the gradients act on the variable $\ell$), then using the martingale property of $N^t_\cdot(\ell)$ we obtain for all $s\leq t$
\begin{equs}
\E\Big[\nabla^+ \xi^N(t,\ell) \nabla^- \xi^N(t,\ell)  \, | \, \cF_s \Big] &= \big(\nabla^+ I^{N}(t,\ell) + \nabla^+ N^t_s(\ell)\big)\big(\nabla^- I^{N}(t,\ell) + \nabla^- N^t_s(\ell)\big)\\
&\quad+ \E\bigg[\int_{s}^t \sum_{k=1}^{2N-1} K^N_{t-r}(k,\ell) \,d\langle M^N(\cdot,k)\rangle_r\, \Big| \, \cF_s\bigg]\;.
\end{equs}
Set
\begin{equ}
f^N_s(t,\ell) := \E\bigg[\Big|\E\Big[\nabla^+\xi^N(t,\ell) \nabla^-\xi^N(t,\ell)\,\big|\, \cF_s\Big]\Big|\bigg]\;.
\end{equ}
Fix $\epsilon > 0$. Using the expression of the bracket (\ref{Eq:Bracket}) of $M^N$, we get
\begin{equ}\label{Eq:fN}
f^N_s(t,\ell) \leq D^N_s(t,\ell) + \int_s^t \sum_{k\in B^N_{\epsilon/2}(r)} (2N)^{4\alpha} |K^N_{t-r}(k,\ell)| f^N_s(r,k) dr\;,
\end{equ}
where $D^N_s(t,\ell) = D^{N,1}_s(t,\ell) + D^{N,2}_s(t,\ell) + D^{N,3}_s(t,\ell)$ with
\begin{equs}
D^{N,1}_s(t,\ell) &:= \E\Big[\big|\big(\nabla^+ I^{N}(t,\ell) + \nabla^+ N^t_s(\ell)\big)\big(\nabla^- I^{N}(t,\ell) + \nabla^- N^t_s(\ell)\big) \big|\Big]\;,\\
D^{N,2}_s(t,\ell) &:= \int_s^t \sum_{k\notin B^N_{\epsilon/2}(r)} (2N)^{4\alpha} |K^N_{t-r}(k,\ell)| f^N_s(r,k) dr\;,\\
D^{N,3}_s(t,\ell) &:= \lambda_N\E\bigg[\Big|\E\Big[\int_{s}^t \sum_{k=1}^{2N-1} K^N_{t-r}(k,\ell)\big(\xi^N(r,k)\Delta\xi^N(r,k)+2\xi^N(r,k)^2\big)dr \, \big| \, \cF_s\Big]\Big|\bigg]\;.
\end{equs}
From now on, we fix a compact set $K\subset [0,T)$.
\begin{lemma}\label{Lemma:BoundDN}
Fix $\epsilon > 0$. There exists $\kappa > 0$ such that
\begin{equ}\label{Eq:BoundDNStrong}
D^N_s(t,\ell) \lesssim 1 \wedge \frac{1}{(2N)^{2\alpha+\kappa} \sqrt{t-s}}\;,
\end{equ}
uniformly over all $\ell\in B^N_{\epsilon}(t)$, all $N^{-\alpha} \leq s < t \in K$ and all $N\geq 1$.
\end{lemma}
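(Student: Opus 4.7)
I would bound the three components $D^{N,1}, D^{N,2}, D^{N,3}$ separately, each by a constant times $(2N)^{-2\alpha-\kappa}(t-s)^{-1/2}$ for a common $\kappa>0$. The trivial bound by $1$ is immediate from Proposition \ref{Prop:BoundMomentsKPZ} combined with the pointwise estimate $|\nabla^\pm \xi^N|\le (e^{\gamma_N}-1)\xi^N$, so all the work goes into producing decay in $N$.

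For $D^{N,2}$, the constraints $\ell\in B^N_\epsilon(t)$ and $k\notin B^N_{\epsilon/2}(r)$ force $|k-\ell|\gtrsim N \gg (2N)^{2\alpha}\sqrt{t-r}$, so the Gaussian off-diagonal decay of $p^N_{t-r}$ from Appendix \ref{Appendix:Kernel} yields $|K^N_{t-r}(k,\ell)|\lesssim e^{-\delta N^{2\alpha}}$; combined with the crude pointwise bound $f^N_s(r,k)\lesssim \gamma_N^2 b^N(r,k)^2$ this makes $D^{N,2}$ super-polynomially small. For $D^{N,1}$, Cauchy--Schwarz reduces matters to estimating $\nabla^\pm I^N(t,\ell)$ and $\nabla^\pm N^t_s(\ell)$ in $L^2$. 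Proposition \ref{Prop:IC} provides $|\nabla^\pm I^N|\lesssim t^{-1/2}(2N)^{-3\alpha}$, of order $(2N)^{-5\alpha/2}$ since $t\ge N^{-\alpha}$. For the martingale piece, BDG together with the bracket \eqref{Eq:Bracket} and Proposition \ref{Prop:BoundMomentsKPZ} give
\begin{equ}
\E\bigl[(\nabla^\pm N^t_s(\ell))^2\bigr] \lesssim (2N)^{2\alpha} \int_0^s \sum_k \bigl(\nabla^\pm_\ell p^N_{t-r}(k,\ell)\bigr)^2 b^N(r,k)^2\, dr\;.
\end{equ}
The Chapman--Kolmogorov identity $\sum_k p^N_{t-r}(k,\cdot)p^N_{t-r}(k,\cdot') = p^N_{2(t-r)}(\cdot,\cdot')$ rewrites the inner sum as a second-order discrete difference of $p^N_{2(t-r)}$ on the diagonal, which Lemma \ref{Lemma:BoundHeatKernelZ} bounds by $(2N)^{-6\alpha}(t-r)^{-3/2}$, whose primitive over $[0,s]$ is $(t-s)^{-1/2}$. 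Hence $\E[(\nabla^\pm N^t_s)^2]\lesssim (2N)^{-4\alpha}(t-s)^{-1/2}$, and the dominant martingale--martingale cross term controls $D^{N,1}$ with $\kappa=2\alpha$.

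The genuine difficulty is $D^{N,3}$. I would decompose
\begin{equ}
\xi^N\Delta\xi^N + 2(\xi^N)^2 = 2(\xi^N)^2(k) + \xi^N(k)\Delta\xi^N(k)\;,
\end{equ}
and treat the two pieces separately. The $\xi^N\Delta\xi^N$ piece gains a factor $\gamma_N \sim (2N)^{-\alpha}$ from the pointwise bound $|\Delta\xi^N|\lesssim \gamma_N \xi^N$, which tames the $\lambda_N$ prefactor; Cauchy--Schwarz on $\sum_k|K^N(k,\ell)|$ against the moment bound of Proposition \ref{Prop:BoundMomentsKPZ} then yields the target estimate. For the $2(\xi^N)^2$ piece the naive estimate $\sum_k|K^N|\lesssim(2N)^{-6\alpha}(t-r)^{-3/2}$ is not integrable at $r=t$, so one must extract cancellation: recentre at $\ell$ by writing $(\xi^N)^2(r,k)=(\xi^N)^2(r,\ell)+[(\xi^N)^2(r,k)-(\xi^N)^2(r,\ell)]$. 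The recentred increment is estimated via Lemma \ref{Lemma:IncrSpaceKPZ}, producing a factor $(|k-\ell|/(2N)^{2\alpha})^\beta$ with $\beta<1/2$ which, after summation in $k$ against $|K^N|$, effectively replaces $(t-r)^{-3/2}$ by an integrable $(t-r)^{-3/2+\beta/2}$. For the centred residual $(\xi^N)^2(r,\ell)\sum_k K^N_{t-r}(k,\ell)$, use Chapman--Kolmogorov to identify $\sum_k K^N_{t-r}(k,\ell)$ with a mixed discrete second derivative of $p^N_{2(t-r)}$ on the diagonal, then integrate by parts in $r$ using $\partial_r p^N_{t-r}=-2c_N\Delta p^N_{t-r}$ to transfer one time derivative onto $(\xi^N)^2(r,\ell)$, whose temporal H\"older regularity (Lemma \ref{Lemma:IncrTimeKPZ}) removes the $r\uparrow t$ singularity; the standing assumption $s\ge N^{-\alpha}$ keeps the $r=s$ boundary term harmless.

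The main obstacle is precisely this last term in $D^{N,3}$: the bare kernel bound produces a non-integrable $(t-r)^{-3/2}$ short-time singularity, so obtaining the uniform $(t-s)^{-1/2}$ decay requires cancellation extracted from the space--time regularity of $\xi^N$ and a careful integration-by-parts identity for the discrete heat kernel.
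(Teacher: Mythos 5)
Your decomposition into $D^{N,1}, D^{N,2}, D^{N,3}$ and the treatment of the first two terms match the paper's argument: exponential off-diagonal decay kills $D^{N,2}$, and $D^{N,1}$ is handled by Cauchy--Schwarz plus the gradient estimates on $I^N$ from Proposition \ref{Prop:IC} and the $L^2$-norm of $\nabla^\pm N^t_s$ from the bracket. (Your route to the latter via Chapman--Kolmogorov and a diagonal second difference is not what Lemma \ref{Lemma:BoundHeatKernelZ} literally states; the bound $\sum_k(\nabla \bar p^N_{t-r}(k,\ell))^2 \le \sup_k|\nabla \bar p^N_{t-r}|\cdot\sum_k|\nabla\bar p^N_{t-r}|\lesssim (c_N(t-r))^{-3/2}$ using the two stated estimates is the cleaner way to get the same number.) The split of $D^{N,3}$ into the $\xi^N\Delta\xi^N$ and $2(\xi^N)^2$ contributions, with the $\gamma_N$ gain on the first, also matches the paper.

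The one place where your plan as written has a genuine gap is the quadratic residual in $D^{N,3}$. You recenter at $(r,\ell)$, writing $\xi^N(r,k)^2 = \xi^N(r,\ell)^2 + [\,\cdot\,]$, which leaves the centred residual $\int_s^t \xi^N(r,\ell)^2 \sum_k K^N_{t-r}(k,\ell)\,dr$ with a \emph{stochastic, $r$-dependent} integrand. You then propose a pathwise integration by parts in $r$ to ``transfer a time derivative onto $\xi^N(r,\ell)^2$'', arguing that its temporal H\"older regularity (Lemma \ref{Lemma:IncrTimeKPZ}) removes the $r\uparrow t$ singularity. But $\xi^N(\cdot,\ell)$ is a pure-jump semimartingale; the H\"older modulus of Lemma \ref{Lemma:IncrTimeKPZ} is an $L^p$-moment bound, not a pathwise one, and cannot be plugged into a Riemann--Stieltjes integration by parts. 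Making this rigorous would require an It\^o-type decomposition of $d[\xi^N(r,\ell)^2]$, which reintroduces generator and martingale terms that you would then have to estimate separately; your proposal does not address this. The fix -- and it is exactly what the paper does -- is to recenter at the \emph{fixed} point $(t,\ell)$: write $\xi^N(r,k)^2 = [\xi^N(r,k)^2-\xi^N(t,\ell)^2] + \xi^N(t,\ell)^2$. The second piece is $r$-independent given $\cF_s$, so it pulls out of the $r$-integral, and the kernel integral $\int_s^t\sum_k\bar K^N_{t-r}(k,\ell)\,dr$ is estimated deterministically via the identity $\int_s^t\sum_k\bar K^N_{t-r}\,dr = \int_{t-s}^\infty\sum_k\bar K^N_u\,du - (\text{exponentially small correction})$, which uses $\int_0^\infty\sum_k\bar K^N_u\,du = 0$ (precisely the kernel integration-by-parts you gesture at) together with Lemma \ref{Lemma:BoundHeatKernelZ}. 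The difference piece is controlled in expectation by the combined space and time H\"older moment bounds of Lemmas \ref{Lemma:IncrSpaceKPZ} and \ref{Lemma:IncrTimeKPZ}, where they are legitimately applicable. With this modification the rest of your argument goes through.
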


\begin{proof}
Let us observe that we have the simple bound
\begin{equ}\label{Eq:TrivialBoundfN}
f^N_s(r,k) \lesssim b^N(r,k)^2 \gamma_N^2\;,
\end{equ}
uniformly over all the parameters. Recall also that $b^N(t,\ell)$ is of order $1$ whenever $\ell \in B^N_\epsilon(t)$.\\
Let $\bar{p}^N$ be the discrete heat kernel on the whole line $\Z$ sped up by $2c_N$, see Appendix \ref{Appendix:Kernel}, and set $\bar{K}^N_t(k,\ell) = \nabla^+ \bar{p}^N_t(\ell-k)\nabla^- \bar{p}^N_t(\ell-k)$.\\
\textit{Bound of $D^{N,1}_s$.} It suffices to bound the square of the $L^2$-norms of $\nabla^\pm I^N(t,\ell)$ and $\nabla^\pm N^t_s(\ell)$. By Proposition \ref{Prop:IC}, we deduce that $(\nabla^\pm I^N(t,\ell))^2 \lesssim  N^{-5\alpha}$ uniformly over all $N^{-\alpha} \leq t \in K$, all $\ell \in B^N_\epsilon(t)$ and all $N\geq 1$.\\
We now treat $\nabla^+ N^t_s(\ell)$ (the proof is the same with $\nabla^-$). Using again Lemmas \ref{Lemma:ExpoDecay} and \ref{Lemma:DecaySeriesKernel}, we have
\begin{equs}
\E\Big[\big(\nabla^+ N^t_s(\ell)\big)^2\Big] &\lesssim \E \Big[\sum_{k=1}^{2N-1} \int_0^s\big(\nabla^+ p^N_{t-r}(k,\ell)\big)^2 d\langle M(\cdot,k)\rangle_r\Big] \\
&\lesssim (2N)^{2\alpha} \!\!\sum_{k\in B^N_{\epsilon/2}(r)}\!\! \int_0^s\big(\nabla^+ \bar{p}^N_{t-r}(k,\ell)\big)^2 dr + \cO\big(N^{1+2\alpha}e^{-\delta N^{2\alpha}}\big)\;,
\end{equs}
uniformly over all $\ell\in B^N_\epsilon(t)$, all $t\in K$ and all $N\geq 1$. Using Lemma \ref{Lemma:BoundHeatKernelZ}, we easily deduce that the last expression is bounded by a term of order $1 \wedge 1/(\sqrt{t-s}(2N)^{4\alpha})$ as required.\\
\textit{Bound of $D^{N,2}$.} Using the exponential decay of Lemma \ref{Lemma:ExpoDecay} and (\ref{Eq:TrivialBoundfN}), we deduce that there exists $\delta > 0$ such that
\begin{equ}
\int_s^t \sum_{k\notin B^N_{\epsilon/2}(r)} (2N)^{4\alpha} |K^N_{t-r}(k,\ell)| f^N_s(r,k) dr \lesssim \int_s^t \sum_{k\notin B^N_{\epsilon/2}(r)} (2N)^{2\alpha} e^{-\delta N^{2\alpha}} dr\;,
\end{equ}
uniformly over all $\ell\in B^N_\epsilon(t)$, all $s\leq t \in K$ and all $N\geq 1$. This trivially yields a bound of order $N^{-3\alpha}$ as required.\\
\textit{Bound of $D^{N,3}$.} By Lemmas \ref{Lemma:DecaySeriesKernel} and \ref{Lemma:ExpoDecay}, there exists $\delta>0$ such that $D^{N,3}_s(t,\ell)$ can be rewritten as
\begin{equ}\label{Eq:ExpressionDelicateRewritten}
\lambda_N\E\bigg[\Big|\E\Big[\int_{s}^t \sum_{k\in B^N_{\epsilon/2}(r)} \bar{K}^N_{t-r}(k,\ell)\big(\xi^N(r,k)\Delta\xi^N(r,k)+2\xi^N(r,k)^2\big)dr \, \big| \, \cF_s\Big]\Big|\bigg]\;,
\end{equ}
up to an error of order $N^{2\alpha+1}e^{-\delta N^{2\alpha}}$, uniformly over all $\ell \in B^N_\epsilon(t)$, all $t\in K$ and all $N\geq 1$. The error term satisfies the bound of the statement. We bound separately the two contributions arising in (\ref{Eq:ExpressionDelicateRewritten}). First, using the almost sure bound $|\Delta \xi^N(r,k)| \lesssim \gamma_N \xi^N(r,k)$, we get
\begin{equs}
{}&\lambda_N\E\bigg[\Big|\E\Big[\int_{s}^t \sum_{k\in B^N_{\epsilon/2}(r)} \bar{K}^N_{t-r}(k,\ell) \xi^N(r,k)\Delta\xi^N(r,k)dr \, \big| \, \cF_s\Big]\Big|\bigg]\\
&\lesssim (2N)^{\alpha} \int_s^t \sum_{k\in B^N_{\epsilon/2}(r)} |\bar{K}^N_{t-r}(k,\ell)| dr \;,
\end{equs}
uniformly over all $\ell \in B^N_\epsilon(t)$, all $t\in K$ and all $N\geq 1$. Using Lemma \ref{Lemma:BoundHeatKernelZ}, this easily yields a bound of order $1/(2N)^{2\alpha + \kappa}$ with $\kappa > 0$, as required. Second, we have
\begin{equs}
{}&\int_{s}^t \sum_{k\in B^N_{\epsilon/2}(r)} \bar{K}^N_{t-r}(k,\ell)\E\Big[ \xi^N(r,k)^2 \, \big| \, \cF_s\Big]dr\\
&= \int_{s}^t \sum_{k\in B^N_{\epsilon/2}(r)} \bar{K}^N_{t-r}(k,\ell)\E\Big[ \xi^N(r,k)^2 - \xi^N(t,\ell)^2 \, \big| \, \cF_s\Big]dr\\
&+ \int_{s}^t \sum_{k\in B^N_{\epsilon/2}(r)} \bar{K}^N_{t-r}(k,\ell)dr \, \E\Big[ \xi^N(t,\ell)^2 \, \big| \, \cF_s\Big]\;.
\end{equs}
Using an integration by parts, we get
\begin{equ}
\int_{s}^t \sum_{k\in B^N_{\epsilon/2}(r)} \bar{K}^N_{t-r}(k,\ell)dr = \int_{t-s}^\infty \sum_{k\in\Z} \bar{K}^N_{r}(k,\ell)dr - \int_{s}^t \sum_{k\notin B^N_{\epsilon/2}(r)} \bar{K}^N_{t-r}(k,\ell)dr\;.
\end{equ}
The second term on the right can be bounded using Lemma \ref{Lemma:ExpoDecay}: it has a negligible contribution. Using Lemma \ref{Lemma:BoundHeatKernelZ} on the first term, we easily deduce that
\begin{equ}
\lambda_N \E\bigg[\Big|\int_{s}^t \sum_{k\in B^N_{\epsilon/2}(r)} \bar{K}^N_{t-r}(k,\ell)dr \, \E\Big[ \xi^N(t,\ell)^2 \, \big| \, \cF_s\Big]\Big|\bigg] \lesssim 1 \wedge \frac{1}{\sqrt{t-s} (2N)^{4\alpha}}\;,
\end{equ}
uniformly over all $\ell \in B^N_\epsilon(t)$, all $t\in K$ and all $N\geq 1$. On the other hand, for any given $\beta \in (0,1/4)$, the Cauchy-Schwarz inequality together with Lemmas \ref{Lemma:IncrSpaceKPZ} and \ref{Lemma:IncrTimeKPZ} yields
\begin{equs}
\E\Big[\big|\xi^N(r,k)^2 - \xi^N(t,\ell)^2 \big| \Big] &\lesssim \E\Big[\big(\xi^N(r,k)+\xi^N(r,\ell)\big)^2\Big]^{\frac12}\E\Big[\big(\xi^N(r,k)-\xi^N(r,\ell)\big)^2\Big]^{\frac12}\\
&+ \E\Big[\big(\xi^N(r,\ell)+\xi^N(t,\ell)\big)^2\Big]^{\frac12}\E\Big[\big(\xi^N(r,\ell)-\xi^N(t,\ell)\big)^2\Big]^{\frac12}\\
&\lesssim 1 \wedge \Big( \Big|\frac{\ell-k}{(2N)^{2\alpha}}\Big|^{2\beta} + |t-r|^\beta + \frac{1}{(2N)^{\alpha}} \Big) \;, 
\end{equs}
uniformly over all $\ell \in B^N_\epsilon(t)$, all $k\in B^N_{\epsilon/2}(r)$, all $r\leq t \in K$ and all $N\geq 1$. Using Lemma \ref{Lemma:BoundHeatKernelZ}, it is simple to deduce the existence of $\kappa \in (0,1)$ such that
\begin{equ}
\lambda_N \E\bigg[\Big|\int_{s}^t \sum_{k\in B^N_{\epsilon/2}(r)} \bar{K}^N_{t-r}(k,\ell)\E\Big[ \xi^N(r,k)^2 - \xi^N(t,\ell)^2 \, \big| \, \cF_s\Big]dr \Big|\bigg] \lesssim \frac{1}{(2N)^{2\alpha+\kappa}}\;,
\end{equ}
uniformly over all $s<t \in K$, all $\ell \in B^N_\epsilon(t)$ and all $N\geq 1$.
\end{proof}

\noindent We have all the elements at hand to prove the main result of this section.
\begin{proof}[of Proposition \ref{Prop:DelicateKPZ}]
Iterating (\ref{Eq:fN}) and using Lemma \ref{Lemma:BoundK} and the bound (\ref{Eq:TrivialBoundfN}), we deduce that
\begin{equ}
f^N_s(t,\ell) \leq D^N_s(t,\ell) + \sum_{n\geq 1} H_s(t,\ell,n)\;,
\end{equ}
where for all $n\geq 1$, we set $t_{n+1}=t$, $k_{n+1}=\ell$ and
\begin{equ}
H_s(t,\ell,n):= \int\limits_{s\leq t_1\leq \ldots \leq t_{n} \leq t} \sum_{k_i \in B_{\epsilon/2}^N(t_i)} D_s^N(t_1,k_1) \prod_{i=1}^{n} (2N)^{4\alpha}|K^N_{t_{i+1}-t_i}(k_i,k_{i+1})| dt_i\;.
\end{equ}
By Lemma \ref{Lemma:BoundDN}, we already know that $D^N_s(t,\ell)$ satisfies the bound of the statement of Proposition \ref{Prop:DelicateKPZ}. To conclude the proof of the proposition, we only need to show that this is also the case for the sum over $n\geq 1$ of $H_s(t,\ell,n)$.\\
Fix $A>0$. Let $n_0=c\log N$, for an arbitrary $c > -3\alpha/\log\beta$, where $\beta < 1$ is taken from Lemma \ref{Lemma:BoundK}. Using Lemmas \ref{Lemma:BoundDN} and  \ref{Lemma:BoundK}, we easily deduce that $H_s(t,\ell,n) \lesssim \beta^n$ uniformly over all $n\geq 1$, all $\ell \in \{N-A(2N)^{2\alpha}, N+A(2N)^{2\alpha}\}$ and all $N^{-\alpha} \leq s \leq t \in K$.  Given the definition of $n_0$, we deduce that $\sum_{n\geq n_0}H_s(t,\ell,n) \lesssim (2N)^{-3\alpha}$ uniformly over the same set of parameters, as required.\\
Let us now treat $\sum_{n<n_0} H_s(t,\ell,n)$. We introduce
\begin{equ}
A_s(t,\ell,n):= \int\limits_{s\leq t_1\leq \ldots \leq t_{n} \leq t} \sum_{k_i \in B^N_\epsilon(t_i)} D_s^N(t_1,k_1) \prod_{i=1}^{n} (2N)^{4\alpha}|K^N_{t_{i+1}-t_i}(k_i,k_{i+1})| dt_i\;.
\end{equ}
By Lemma \ref{Lemma:BoundDN}, we have
\begin{equ}
A_s(t,\ell,n)\lesssim \!\!\!\! \int\limits_{s\leq t_1\leq \ldots \leq t_{n} \leq t}\!\! \sum_{k_i \in B^N_\epsilon(t_i)} \frac{1}{(2N)^{2\alpha+\kappa}\sqrt{t_1-s}} \prod_{i=1}^{n} (2N)^{4\alpha}|K^N_{t_{i+1}-t_i}(k_i,k_{i+1})| dt_i\;.
\end{equ}
If we restrict the domain of integration to those $t_1$ such that $t_1-s \geq (t-s)/(n+1)$, then a simple calculation based on Lemma \ref{Lemma:BoundK} ensures that this restricted integral is bounded by a term of order
\begin{equ}
\frac{\sqrt{n+1} \beta^n}{(2N)^{2\alpha+\kappa}\sqrt{t-s}} \lesssim \frac{1}{(2N)^{2\alpha+\kappa}\sqrt{t-s}}\;,
\end{equ}
for all $n\leq n_0$. On the other hand, when $t_1-s < (t-s)/(n+1)$ there is at least one increment $t_{i+1}-t_i$ which is larger than $(t-s)/(n+1)$. By symmetry, let us consider the case $i=1$. By Lemma \ref{Lemma:DecaySeriesKernel}, we can replace $K^N_{t_{2}-t_1}(k_1,k_{2})$ with $\bar{K}^N_{t_{2}-t_1}(k_1,k_{2})$ up to a negligible term. By Lemma \ref{Lemma:BoundHeatKernelZ}, we bound the sum over $k_1$ of $|\bar{K}^N_{t_{2}-t_1}(k_1,k_{2})|$ by a term of order $(2N)^{-4\alpha}(t_2-t_1)^{-1}$ thus yielding the bound
\begin{equs}
{}&\int_s^{s+\frac{t-s}{n+1}} \sum_{k_1 \in B^N_\epsilon(t_1)} \frac{1}{(2N)^{2\alpha+\kappa}\sqrt{t_1-s}}\, (2N)^{4\alpha}|K^N_{t_{2}-t_1}(k_1,k_{2})| dt_1\\
&\lesssim \int_{s}^{s+\frac{t-s}{n+1}} \frac{1}{(2N)^{2\alpha+\kappa}\sqrt{t_1-s}\,(t_2-t_1)}\, dt_1 + \cO(N^{1+4\alpha}e^{-\delta N^{2\alpha}})\\
&\lesssim \frac{\sqrt{\frac{t-s}{n+1}}}{(2N)^{2\alpha+\kappa}\frac{t-s}{n+1}} \lesssim \frac{1}{(2N)^{2\alpha+\kappa'} \sqrt{t-s}}\;,
\end{equs}
for all $\kappa'\in (0,\kappa)$ and all $n< n_0$. Using Lemma \ref{Lemma:BoundK}, we can bound the integral over $t_2,\ldots,t_n$ of the remaining terms by a term of order $\beta^{n-1}$. Consequently, we have proved that there exists $\kappa'>0$ such that
\begin{equ}\label{Eq:BoundAs}
A_s(t,\ell,n) \lesssim \frac{\beta^{n-1}}{(2N)^{2\alpha + \kappa'}\sqrt{t-s}}\;,
\end{equ}
uniformly over all $\ell \in [N-A(2N)^{2\alpha},N+A(2N)^{2\alpha}]$, all $t\in K$, all $s\in [0,t]$, all $n < n_0$ and all $N\geq 1$.\\
Finally, we set $B_s(t,\ell,n) := H_s(t,\ell,n) - A_s(t,\ell,n)$. We can replace each occurrence of $p^N$ by $\bar{p}^N$ up to a negligible term, using Lemma \ref{Lemma:DecaySeriesKernel}. Among the parameters $k_1,\ldots,k_n$ involved in the definition of $B_s(t,\ell,n)$, at least one them, say $k_{i_0}$, belongs to $B^N_{\epsilon/2}(t_{i_0})\backslash B^N_\epsilon(t_{i_0})$. Then, using the bound $\big| \bar{K}^N_{t}(k,\ell) \big| \leq \bar{p}^N_{t}(k,\ell)$ together with the semigroup property of the discrete heat kernel at the second line and the exponential decay of Lemma \ref{Lemma:ExpoDecay}, we get
\begin{equs}
{}&\sum_{k_{i_0 +1},\ldots,k_n} \prod_{i=i_0}^n \big| \bar{K}^N_{t_{i+1}-t_i}(k_i,k_{i+1}) \big|\\
&\leq \sum_{k_{i_0 +1},\ldots,k_n} \prod_{i=i_0}^n \bar{p}^N_{t_{i+1}-t_i}(k_i,k_{i+1})= \bar{p}^N_{t-t_{i_0}}(k_{i_0},\ell)\lesssim e^{-\delta N^{2\alpha}}\;,
\end{equs}
uniformly over all the parameters. Using Lemma \ref{Lemma:BoundDN}, one easily gets
\begin{equ}
B_s(t,\ell,n) \lesssim (2N)^{n4\alpha} e^{-\delta(2N)^{2\alpha}}\;,
\end{equ}
uniformly over all $n\geq 1$, all $s<t\in K$ and all $\ell \in [N-A(2N)^{2\alpha},N+A(2N)^{2\alpha}]$. Given the definition of $n_0$, we deduce that the sum over all $n<n_0$ of the latter is negligible w.r.t.~$(2N)^{-3\alpha}$, uniformly over the same set of parameters. This concludes the proof.
\end{proof}

\section{Appendix}

\subsection{Martingale inequalities}\label{Appendix:Mgale}

Let $X(t),t\geq 0$ be a c\`adl\`ag, mean zero, square-integrable martingale. Let $\langle X \rangle_t, t\geq 0$ denote the bracket of $X$, that is, the unique predictable process such that $X^2-\langle X \rangle$ is a martingale. Let $[X]_t$ denote its quadratic variation: in the case where the martingale is of finite variation, we have ${[X]}_t = \sum_{\tau \in (0,t]} (X_\tau - X_{\tau-})^2$. It happens that the process $D_t = [X]_t - \langle X \rangle_t$ is also a martingale. We will rely on the following Burkholder-Davis-Gundy inequalities. For every $p \geq 2$, there exists $c(p) > 0$ such that
\begin{equs}\label{Eq:BDG2}
	\E\big[ |X_t|^p \big]^{\frac{1}{p}} &\leq c(p)\Big( \E\Big[ \langle X \rangle_t^{\frac{p}{2}} \Big]^{\frac{1}{p}} + \E\Big[ [ D ]_t^{\frac{p}{4}} \Big]^{\frac{1}{p}}\Big)\;,\\
	\E\Big[ \sup_{s\leq t} |X_s|^p \Big]^{\frac{1}{p}} &\leq c(p) \Big( \E\Big[ \langle X \rangle_t^{\frac{p}{2}} \Big]^{\frac{1}{p}} + \E\Big[ \sup_{s\leq t}|X_s-X_{s-}|^p \Big]^{\frac{1}{p}} \Big)\;.\label{Eq:BDG3}
\end{equs}

\subsection{Discrete heat kernel estimates}\label{Appendix:Kernel}

We introduce the fundamental solution $p^N_t(k,\ell)$ of the discrete heat equation
\begin{equs}\label{DiscreteHeat}
	\begin{cases}
	\partial_t p^N_t(k,\ell) = c_N \Delta p^N_t(k,\ell)\;,\\
	p^N_0(k,\ell) = \delta_k(\ell)\;,\\
	p^N_t(k,0) = p^N_t(k,2N) = 0\;,\end{cases}
\end{equs}
for all $k,\ell \in \{1,\ldots,2N-1\}$, as well as its analogue $\bar{p}^N_t(\ell)$ on $\Z$:
\begin{equs}\label{DiscreteHeatKPZWholeLine}
	\begin{cases}\partial_t \bar{p}^N_t(\ell) = c_N \Delta \bar{p}^N_t(\ell)\;,\\
	\bar{p}^N_0(\ell) = \delta_0(\ell)\;,\end{cases}
\end{equs}
for all $\ell \in \Z$. The latter is more tractable than the former since it is translation invariant. Using a coupling between a simple random walk on $\Z$ and a simple random walk killed at $0$ and $2N$, we get the elementary bound $p^N_t(k,\ell) \leq \bar{p}^N_t(\ell-k)$ for all $k,\ell \in \{1,\ldots,2N-1\}$ and all $t\geq 0$. The following estimates are classical, so we omit the proofs.

\begin{lemma}\label{Lemma:BoundHeatKernelZ}
We have $\sum_k \bar{p}^N_t(k)|k| \lesssim \sqrt{c_N t}\;,\quad$ $|\bar{p}^N_t(\ell)| \lesssim (c_N t)^{-\frac12}$ as well as
\begin{equ}
\sum_{k\in\Z} |\nabla \bar{p}^N_t(k)| \leq 2\bar{p}^N_t(0)\;,\quad \sum_{k\in\Z} |\nabla \bar{p}^N_t(k)| |k| \lesssim 1\;,\quad |\nabla \bar{p}^N_t(\ell)| \lesssim 1\wedge \frac{1}{tc_N}\;,
\end{equ}
uniformly over all $\ell\in\Z$, all $t\geq 0$ and all $N\geq 1$.
\end{lemma}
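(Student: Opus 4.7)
The plan is to use two complementary representations of $\bar{p}^N_t$: the probabilistic one, $\bar{p}^N_t(\ell) = \bbP(X_t = \ell)$ where $X_t$ is a continuous-time symmetric random walk on $\Z$ with jump rate $c_N$ in each direction (so $X_t = S_{N_t}$ with $S$ a simple random walk and $N$ a Poisson process of rate $2c_N$); and the Fourier one,
\begin{equ}
\bar{p}^N_t(\ell) = \frac{1}{2\pi}\int_{-\pi}^{\pi} e^{-2c_N t(1-\cos\theta)}\, e^{i\ell\theta}\,d\theta\;.
\end{equ}
The first estimate is immediate from the probabilistic picture: $\sum_k \bar{p}^N_t(k) |k| = \bE|X_t| \leq \bE[X_t^2]^{1/2} = \sqrt{2 c_N t}$, since the variance of $X_t$ is exactly $2c_N t$. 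The pointwise bound $|\bar{p}^N_t(\ell)| \lesssim (c_N t)^{-1/2}$ follows from the Fourier formula combined with the elementary inequality $1-\cos\theta \geq 2\theta^2/\pi^2$ on $[-\pi,\pi]$, which reduces the integral to a Gaussian one; when $c_N t \leq 1$ we just use $\bar{p}^N_t(\ell) \leq 1$.

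The next two bounds rest on the key fact that $\ell \mapsto \bar{p}^N_t(\ell)$ is symmetric and non-increasing in $|\ell|$. I would obtain this either from the explicit identity $\bar{p}^N_t(\ell) = e^{-2c_N t} I_{|\ell|}(2c_N t)$ (using that $\ell \mapsto I_\ell(x)$ is decreasing on $\N$ for every $x > 0$), or more directly via a reflection/coupling argument for the random walk. Granted unimodality, $\nabla \bar{p}^N_t(k)$ has a constant sign on each of $\{k \geq 0\}$ and $\{k \leq -1\}$, so $\sum_k |\nabla \bar{p}^N_t(k)|$ splits into two telescoping series both equal to $\bar{p}^N_t(0)$, yielding the claimed bound $2\bar{p}^N_t(0)$. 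For the weighted sum $\sum_k |\nabla \bar{p}^N_t(k)|\,|k|$, I would apply Abel summation on each half-line; the boundary terms vanish because $\bar{p}^N_t$ decays at infinity, and one is left with $\sum_{k \geq 1} \bar{p}^N_t(k) + \sum_{k \leq -1} \bar{p}^N_t(-k) \leq \sum_{k \in \Z} \bar{p}^N_t(k) = 1$.

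For the final estimate, the bound by $1$ is a direct consequence of the previous one: $|\nabla \bar{p}^N_t(\ell)| \leq 2 \bar{p}^N_t(0) \leq 2$. For the $1/(c_N t)$ bound, I return to Fourier: the factor $e^{i(\ell+1)\theta} - e^{i\ell\theta}$ produces $|e^{i\theta} - 1| = 2|\sin(\theta/2)|$, giving
\begin{equ}
|\nabla \bar{p}^N_t(\ell)| \leq \frac{1}{\pi}\int_{-\pi}^{\pi} e^{-4 c_N t \sin^2(\theta/2)} |\sin(\theta/2)|\,d\theta \;,
\end{equ}
and the change of variables $u = \sin(\theta/2)$ reduces this to a one-dimensional Gaussian moment of order $(c_N t)^{-1}$.

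None of the individual steps is particularly delicate: the probabilistic representation handles the $L^1$-type quantities (moments and mass-conservation), while the Fourier representation handles the pointwise and pointwise-gradient bounds. The one ingredient that deserves slightly more care is the unimodality property underpinning estimates three and four; once this is in place via the Bessel identity or a coupling, the telescoping/Abel arguments are automatic.
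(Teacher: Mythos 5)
The paper gives no proof of this lemma, stating only that ``the following estimates are classical, so we omit the proofs.'' Your proposal supplies a correct self-contained argument using exactly the standard toolkit one would expect: the probabilistic (random-walk) picture for the first-moment bound $\sum_k \bar p^N_t(k)|k| = \E|X_t| \le \sqrt{\E X_t^2} = \sqrt{2c_Nt}$; the Fourier formula $\bar p^N_t(\ell) = \frac{1}{2\pi}\int_{-\pi}^\pi e^{-2c_Nt(1-\cos\theta)}e^{-i\ell\theta}\,d\theta$ for the on-diagonal and gradient pointwise bounds; and unimodality (via the Bessel identity $\bar p^N_t(\ell)=e^{-2c_Nt}I_{|\ell|}(2c_Nt)$ or a reflection coupling) for the two telescoping $L^1$-type gradient sums. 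Each step checks out. One tiny bookkeeping remark: in the Abel summation for $\sum_k |\nabla\bar p^N_t(k)||k|$, the negative half-line actually contributes $\sum_{j\ge 0}\bar p^N_t(j)$ rather than $\sum_{j\ge1}\bar p^N_t(j)$ (the boundary term at $k=-1$ picks up $\bar p^N_t(0)$), so the total is $\sum_{j\ge1}\bar p^N_t(j) + \sum_{j\ge 0}\bar p^N_t(j) \le 3/2$ rather than $\le 1$; this has no bearing on the claimed $\lesssim 1$. Likewise, in the final Fourier estimate the Jacobian $d\theta = 2\,du/\sqrt{1-u^2}$ has an integrable singularity at $u=1$ which is harmlessly absorbed by the exponential factor $e^{-4c_Ntu^2}$. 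With those details noted, the argument is sound.
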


From now on, we take $2c_N=(2N)^{4\alpha}$. Recall the definition of $q^N$ from (\ref{Def:qN}).
\begin{lemma}\label{Lemma:HeatKernel}
Uniformly over all $0\leq s < t$, all $\ell \in \{1,\ldots,2N-1\}$ and all $N\geq 1$, we have
\begin{equs}\label{Eq:BoundHeatMass}
	\sum_{k=1}^{2N-1} q^N_{s,t}(k,\ell) \lesssim b^N(t,\ell)\;,\quad \sup_k q^N_{s,t}(k,\ell) \lesssim b^N(t,\ell)\Big( 1\wedge \frac{1}{\sqrt{t-s}\, (2N)^{2\alpha}} \Big)\;.\;\;
\end{equs}
\end{lemma}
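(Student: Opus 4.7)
The plan is to reduce from the Dirichlet kernel on $\{0,\ldots,2N\}$ to the free heat kernel on $\Z$ via the elementary coupling bound $p^N_r(k,\ell) \leq \bar{p}^N_r(\ell-k)$ (couple a simple random walk on $\Z$ with its counterpart killed at $\{0,2N\}$ using the same jump times and directions). It then suffices to bound $\sum_k$ and $\sup_k$ of $\bar{p}^N_{t-s}(\ell-k)\, b^N(s,k)$. The crucial observation, used throughout, is the eigenfunction identity
\[
c_N \Delta e^{-\gamma_N k} = c_N\big(e^{\gamma_N}+e^{-\gamma_N}-2\big) e^{-\gamma_N k} = \lambda_N e^{-\gamma_N k},
\]
which makes $(s,k)\mapsto e^{\lambda_N s - \gamma_N k}$ space-time harmonic for $\partial_s - c_N \Delta$ on $\Z$ and yields
\[
\sum_{k\in\Z} \bar{p}^N_r(\ell-k)\, e^{-\gamma_N k} = e^{\lambda_N r - \gamma_N \ell},
\]
together with its mirror version under $k\mapsto 2N-k$ (using the symmetry $\bar{p}^N_r(-j)=\bar{p}^N_r(j)$).

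For the first inequality, I would decompose $b^N(s,k) \leq 2 + e^{\lambda_N s - \gamma_N k} + e^{\lambda_N s -\gamma_N(2N-k)}$ (from $\max \leq$ sum of positive terms), extend the sum over $k\in\{1,\ldots,2N-1\}$ to $k\in\Z$ (an overestimate since all summands are non-negative), and invoke the two identities above to obtain
\[
\sum_k q^N_{s,t}(k,\ell)\leq 2+e^{\lambda_N t - \gamma_N \ell}+e^{\lambda_N t - \gamma_N(2N-\ell)}\lesssim b^N(t,\ell),
\]
since $b^N(t,\ell)\geq 2$ and also dominates each of the two exponentials by definition of the minimum $\ell\wedge(2N-\ell)$.

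For the second inequality, the trivial bound $\sup\leq\sum$ already provides the factor $b^N(t,\ell)$, so the remaining task is to produce the complementary factor $(c_N(t-s))^{-1/2}\asymp ((2N)^{2\alpha}\sqrt{t-s})^{-1}$. The constant-$2$ part of $b^N(s,k)$ contributes at most $2\bar{p}^N_{t-s}(0)\lesssim(c_N(t-s))^{-1/2}$ by Lemma~\ref{Lemma:BoundHeatKernelZ}. For the exponential part, after changing variables $j=\ell-k$,
\[
\sup_k \bar{p}^N_{t-s}(\ell-k)\, e^{\lambda_N s -\gamma_N k} = e^{\lambda_N s - \gamma_N \ell}\,\sup_{j\in\Z}\bar{p}^N_{t-s}(j)\,e^{\gamma_N j},
\]
so the heart of the matter is the tilted-kernel estimate
\[
\sup_{j\in\Z} \bar{p}^N_r(j)\, e^{\gamma_N j}\lesssim \frac{e^{\lambda_N r}}{\sqrt{c_N r}}.
\]
This is the main obstacle: the maximiser sits at $j\sim\gamma_N c_N r\sim(2N)^{3\alpha}r$, far outside the diffusive window $\sqrt{c_N r}\sim(2N)^{2\alpha}\sqrt{r}$, so a direct Gaussian or naive sup-density bound on $\bar{p}^N_r$ alone cannot deliver the required bound. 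The clean route will be a Doob $h$-transform: the ratio $\tilde{p}^N_r(j):=\bar{p}^N_r(j)\,e^{\gamma_N j -\lambda_N r}$ is the transition density at time $r$ of the asymmetric continuous-time simple random walk on $\Z$ with jump rates $c_N e^{\pm\gamma_N}$ (a Markov process by the eigenfunction identity above). Decomposing this walk as the independent sum of a symmetric SRW with both jump rates equal to $c_N e^{-\gamma_N}$ and a Poisson drift of rate $2c_N\sinh\gamma_N$ (standard thinning), and using that convolution with a probability measure cannot increase the sup norm of a density, Lemma~\ref{Lemma:BoundHeatKernelZ} applied to the symmetric part yields $\sup_j\tilde{p}^N_r(j)\lesssim (c_N e^{-\gamma_N} r)^{-1/2}\lesssim (c_N r)^{-1/2}$, from which the claim follows after multiplying back by $e^{\lambda_N r}$. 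The mirror exponential $e^{-\gamma_N(2N-k)}$ is treated identically. Collecting, $\sup_k q^N_{s,t}(k,\ell) \lesssim b^N(t,\ell)/((2N)^{2\alpha}\sqrt{t-s})$, and the minimum with the sum bound delivers the second inequality.
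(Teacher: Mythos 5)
Your proof is correct and its skeleton matches the paper's: both pass to $\bar p^N$ via $p^N_r(k,\ell)\le\bar p^N_r(\ell-k)$, and both exploit the fact that $k\mapsto e^{\pm\gamma_N k}$ is an eigenfunction of $c_N\Delta$ with eigenvalue $\lambda_N$ — equivalently, the moment generating function identity $\sum_j\bar p^N_r(j)e^{\gamma_N j}=e^{\lambda_N r}$ — which gives the summed bound essentially for free. Where you differ is twofold. First, in how you compare $b^N(s,k)$ to $b^N(t,\ell)$: the paper splits into cases ($b^N(s,k)\le 3$ versus $>3$) and bounds the \emph{ratio} $b^N(s,k)/b^N(t,\ell)$ by $e^{-\lambda_N(t-s)+\gamma_N|\ell-k|}$, then reduces to showing $\sum_k p^N_{t-s}(k,\ell)e^{\gamma_N|\ell-k|}\lesssim e^{\lambda_N(t-s)}$; you instead use the additive decomposition $b^N(s,k)\le 2+e^{\lambda_N s-\gamma_N k}+e^{\lambda_N s-\gamma_N(2N-k)}$ and treat the three pieces separately, which avoids the case analysis and gives the cleaner exact identity $\sum_{k\in\Z}\bar p^N_{t-s}(\ell-k)e^{\lambda_N s-\gamma_N k}=e^{\lambda_N t-\gamma_N\ell}$. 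Second, and more substantively, for the sup estimate $\sup_j\bar p^N_r(j)e^{\gamma_N j}\lesssim e^{\lambda_N r}/\sqrt{c_N r}$ the paper invokes the Fourier representation of $\bar p^N_r(\cdot)e^{\gamma_N\cdot}$ (a contour shift argument), whereas you give a genuinely different, fully probabilistic route: the tilted kernel $\tilde p^N_r(j)=\bar p^N_r(j)e^{\gamma_N j-\lambda_N r}$ is the transition density of the Doob $h$-transformed (asymmetric) walk, which by thinning decomposes as an independent sum of a symmetric walk at rate $c_Ne^{-\gamma_N}$ per direction and a Poisson drift; since convolving with a probability measure cannot increase the sup norm, Lemma~\ref{Lemma:BoundHeatKernelZ} applied to the symmetric part yields the bound. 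Both routes lead to the same estimate with the same uniformity; your $h$-transform argument is more elementary and conceptually transparent, while the paper's Fourier route is shorter to state given that it is only sketched.
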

\begin{proof}
By symmetry, it is sufficient to prove the lemma under the further assumption $\ell \leq N$. If $b^N(s,k) \leq 3$, then we have $b^N(s,k)/b^N(t,\ell) \leq 3$ while if $b^N(s,k) > 3$, then a simple calculation ensures that
\begin{equ}
\frac{b^N(s,k)}{b^N(t,\ell)} \lesssim \begin{cases} e^{-\lambda_N(t-s) + \gamma_N(\ell-k)}\quad\mbox{ if }k\leq N\;,\\
 e^{-\lambda_N(t-s) + \gamma_N(k-\ell)}\quad\mbox{ if }k\geq N\;,\end{cases}
\end{equ}
uniformly over all $s < t$, all $k\in \{1,\ldots,2N-1\}$, all $\ell\in\{1,\ldots,N\}$ and all $N\geq 1$. Therefore, it suffices to show that
\begin{equ}\label{Eq:BoundKernelCrude}
\sum_{k=1}^{2N-1} p^N_{t-s}(k,\ell) z^N_{s,t}(k,\ell) \lesssim 1\;,\quad p^N_{t-s}(k,\ell)z^N_{s,t}(k,\ell) \lesssim 1\wedge \frac{1}{\sqrt{t-s}(2N)^{2\alpha}}\;, 
\end{equ}
with $z^N_{s,t}(k,\ell)$ being either $1$ or $\exp(-\lambda_N(t-s) + \gamma_N|\ell-k|)$. When $z^N_{s,t}(k,\ell)=1$, the bounds are classical, see for instance Lemma 26 of~\cite{EthLab15}. We turn to the other case. Since $k\mapsto e^{a k}$ is an eigenvector of the discrete Laplacian on $\Z$ with eigenvalue $2(\cosh a -1)$, we deduce that
\begin{equ}
\sum_{k=1}^{2N-1} p^N_{t-s}(k,\ell) e^{\gamma_N |\ell-k|} \leq \sum_{k\in\Z} \bar{p}^N_{t-s}(\ell-k) e^{\gamma_N |\ell-k|}\leq 2 e^{2c_N(t-s)( \cosh \gamma_N - 1)} = 2 e^{\lambda_N (t-s)}\;,
\end{equ}
thus yielding the first bound. To get the second bound, it suffices to use the Fourier decomposition of $\bar{p}^N_{t-s}(\cdot)e^{\gamma_N \cdot}$.
\end{proof}

For all $a>0$, $t>0$ and $N\geq 1$, we have
\begin{equ}\label{Eq:LargeDevKernel}
\sum_{k\geq a} \bar{p}^N_t(k) \leq e^{2t c_N g\big(\frac{a}{2c_N t}\big)}\;,
\end{equ}
where $g(x) =\cosh(\argsh x) - x\,\argsh x - 1$ for all $x\in\R$. By studying the function $g(x)/x$, one easily deduces that the term on the r.h.s.~is increasing with $t$.\\
We let $\bar{q}^N_{s,t}(k,\ell) := \bar{p}^N_{t-s}(k,\ell) b^N(s,k)$.

\begin{lemma}\label{Lemma:ExpoDecay}
Fix a compact set $K\subset [0,T)$ and $\epsilon>0$. There exists $\delta >0$ such that
\begin{equ}
q^N_{s,t}(k,\ell) \leq \bar{q}^N_{s,t}(k,\ell) \lesssim e^{-\delta N^{2\alpha}}\;,
\end{equ}
uniformly over all $k\notin B^N_{\epsilon/2}(s)$, all $\ell\in B^N_\epsilon(t)$ and all $0 \leq s \leq t \in K$.
\end{lemma}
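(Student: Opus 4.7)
The plan is to reduce everything to the heat kernel on $\Z$, and then balance the Chernoff-type decay of $\bar p^N$ against the exponential growth of $b^N(s,k)$ by choosing the Chernoff parameter equal to $\gamma_N$, so that the worst piece of $b^N$ is exactly cancelled. The first inequality $q^N \leq \bar q^N$ is immediate from the domination $p^N_{t-s}(k,\ell) \leq \bar p^N_{t-s}(\ell-k)$ recalled before Lemma \ref{Lemma:BoundHeatKernelZ}. By the symmetry $k \mapsto 2N-k$, I may assume $k \leq N$, in which case $b^N(s,k) = 2 + e^{\lambda_N s - \gamma_N k}$.

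Step one is geometric. For the statement to have content, $B^N_\epsilon(t)$ must be non-empty, which forces $\lambda_N t/\gamma_N \leq N - \epsilon N$; combined with $s \leq t$, this gives $\lambda_N s/\gamma_N + \epsilon N/2 < N$, so $B^N_{\epsilon/2}(s)$ is non-empty with left endpoint strictly less than $N$. Hence $k \leq N$ together with $k \notin B^N_{\epsilon/2}(s)$ forces $k < \lambda_N s/\gamma_N + \epsilon N/2$. Combining with $\ell \geq \lambda_N t/\gamma_N + \epsilon N$ from $\ell \in B^N_\epsilon(t)$ yields the separation
$$\ell - k > \frac{\lambda_N(t-s)}{\gamma_N} + \frac{\epsilon N}{2}.$$

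Step two is the pointwise exponential bound on $\bar p^N$. The eigenvalue identity $2 c_N(\cosh \gamma_N -1) = \lambda_N$ already used in the paper's proof of Lemma \ref{Lemma:HeatKernel} gives $\sum_{j\in\Z} \bar p^N_{t-s}(j)\, e^{\gamma_N |j|} \leq 2\, e^{\lambda_N(t-s)}$, whence, termwise,
$$\bar p^N_{t-s}(\ell-k) \;\leq\; 2\, e^{\lambda_N(t-s) - \gamma_N(\ell-k)}.$$
Multiplying by $b^N(s,k)$ and expanding, the constant part of $b^N$ contributes at most $4\,e^{\lambda_N(t-s) - \gamma_N(\ell-k)} \leq 4\, e^{-\gamma_N \epsilon N /2}$ by the separation above, while the exponential part contributes $2\, e^{\lambda_N t - \gamma_N \ell}$ (the hydrodynamic pieces $\lambda_N s$ and $\gamma_N \cdot \lambda_N s/\gamma_N$ cancel exactly), which is bounded by $2\,e^{-\gamma_N \epsilon N}$ since $\gamma_N \ell \geq \lambda_N t + \gamma_N \epsilon N$. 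Altogether $\bar q^N_{s,t}(k,\ell) \lesssim e^{-\gamma_N \epsilon N/2}$.

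Since $\gamma_N \sim 2(2N)^{-\alpha}$, one has $\gamma_N \epsilon N/2 \gtrsim \epsilon N^{1-\alpha}$, and the restriction $\alpha \leq 1/3$ implicit in the KPZ section where the lemma is applied ensures $N^{1-\alpha} \geq N^{2\alpha}$; picking $\delta$ to be any sufficiently small constant (for instance $\delta = \epsilon/2^{1+\alpha}$) then gives $\bar q^N_{s,t}(k,\ell) \lesssim e^{-\delta N^{2\alpha}}$. The one subtlety I expect to be the real obstacle is the necessity of matching Chernoff parameter to boundary-growth rate: any choice different from $\gamma_N$ would fail to absorb the exponential part of $b^N$, which can reach $e^{\lambda_N s}$ and is thus much larger than the required bound.
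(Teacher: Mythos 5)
Your proof is correct, and it takes a genuinely different route from the paper's. The paper proves \eqref{Eq:LargeDevKernel} with the optimal Chernoff rate function $g(x)=\cosh(\argsh x)-x\argsh x -1$, bounds $\bar p^N_{t-s}(\ell-k)$ by $\exp\big(2(t-s)c_N g\big(\tfrac{\ell-k}{2c_N(t-s)}\big)\big)$, and then runs two separate arguments depending on whether $4\alpha\le 1$ (where $(\ell-k)/c_N$ is bounded away from $0$ and concavity of $g$ gives linear decay in $\ell-k$) or $4\alpha>1$ (where a Taylor expansion $g(x)\le -x^2/2+cx^4$ and a ``tedious but simple'' optimisation over $s$ are needed). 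You instead fix the Chernoff parameter at $\gamma_N$ and exploit the exact identity $2c_N(\cosh\gamma_N-1)=\lambda_N$; this makes the exponential part of $b^N(s,k)$ cancel identically, collapses the whole thing to the two explicit bounds $4e^{-\gamma_N\epsilon N/2}+2e^{-\gamma_N\epsilon N}$, and removes the case distinction in $\alpha$ and the separate treatment of $b^N(s,k)\le 3$. Both arguments ultimately deliver $e^{-c\epsilon N^{1-\alpha}}$-type decay and rely on $\alpha\le 1/3$ (inherent in the definition of $T$) to convert this to $e^{-\delta N^{2\alpha}}$, so neither is intrinsically more general; yours is cleaner and more transparent. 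One small caveat: your closing remark that ``any choice different from $\gamma_N$ would fail'' overstates the rigidity — the paper uses the optimal, $(\ell,k,t,s)$-dependent Chernoff parameter and succeeds; $\gamma_N$ is merely the choice that makes the bookkeeping trivial, not the only one that works.
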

\begin{proof}
Let us consider the case where $b^N(s,k)\geq 3$; by symmetry we can assume that $k \in \{1,\ldots,N\}$. Then, we apply (\ref{Eq:LargeDevKernel}) to get
\begin{equ}\label{Eq:IntermediateExpoDecay}
\bar{p}^N_{t-s}(\ell-k) e^{\lambda_N s - \gamma_N k} \leq e^{2(t-s) c_N g\big(\frac{\ell-k}{2c_N (t-s)}\big) + \lambda_N s - \gamma_N k}\;.
\end{equ}
We argue differently according to the value of $\alpha$. If $4\alpha \leq 1$, then $(\ell-k)/c_N$ is bounded away from $0$ uniformly over all $N\geq 1$, all $k\notin B_{\epsilon/2}(s)$ and all $\ell\in B_\epsilon(t)$. Using the concavity of $g$, we deduce that there exists $d>0$ such that the logarithm of the r.h.s.~of (\ref{Eq:IntermediateExpoDecay}) is bounded by
\begin{equ}
-d(\ell-k) + \lambda_N s -\gamma_N k \lesssim -d \epsilon \frac{N}{2}\;,
\end{equ}
thus concluding the proof in that case.\\
We now treat the case $4\alpha > 1$. Let $\eta > 0$. First, we assume that $s \in [0,t-\eta]$. For any $c >1/4!$, we have $g(x)\leq -x^2/2 + cx^4$ for all $x$ in a neighbourhood of the origin. Then, for $N$ large enough we bound the logarithm of the r.h.s.~of (\ref{Eq:IntermediateExpoDecay}) by
\begin{equ}
f(s) = -\frac{1}{2}\frac{(\ell-k)^2}{2c_N(t-s)} + c \frac{(\ell-k)^4}{(2c_N(t-s))^3} + \lambda_N s - \gamma_N k\;.
\end{equ}
A tedious but simple calculation shows the following. There exists $\delta' > 0$, only depending on $\epsilon$, such that $\sup_{s\in [0,t-\eta]} f(s) \leq -\delta' N^{2\alpha}$ for all $N$ large enough. This ensures the bound of the statement in the case where $s\in[0,t-\eta]$.\\
Using the monotonicity in $t$ of (\ref{Eq:LargeDevKernel}), we easily deduce that for all $s\in[t-\eta,t]$, we have
\begin{equ}
\bar{p}^N_{t-s}(\ell-k) e^{\lambda_N s - \gamma_N k}\leq e^{f(t-\eta) + \lambda_N \eta}\;.
\end{equ}
Recall that $\lambda_N$ is of order $N^{2\alpha}$. Choosing $\eta < \delta'$ small enough and applying the bound obtained above, we deduce that the statement of the lemma holds true.\\
The case where $b^N(s,k)$ is smaller than $3$ is simpler, one can adapt the above arguments to get the required bound.
\end{proof}

\begin{proof}[of (\ref{Eq:BoundTailDiscreteKernel})]
The quantity $1-\sum_{k=1}^{2N-1} p^N_{t-s}(k,\ell)$ is equal to the probability that a simple random walk, sped up by $2c_N$ and started from $\ell$, has hit $0$ or $2N$ by time $t-s$. By the reflexion principle, this is smaller than twice
\begin{equ}
\sum_{k\geq \ell} \bar{p}^N_{t-s}(k) + \sum_{k\geq 2N-\ell} \bar{p}^N_{t-s}(k)\;.
\end{equ}
We restrict ourselves to bounding the first term, since one can proceed similarly for the second term. Using (\ref{Eq:LargeDevKernel}), we deduce that it suffices to bound $\exp(2(t-s)c_N g\big(\ell/(2(t-s)c_N)\big) +\lambda_N s)$. This is equal to the l.h.s.~of (\ref{Eq:IntermediateExpoDecay}) when $k=0$, so that the required bound follows from the arguments presented in the last proof.
\end{proof}
Finally, we use the following representation of $p^N$:
\begin{equ}
p^N_t(k,\ell) = \sum_{j\in\Z} \bar{p}^N_t(k+ j4N-\ell) - \bar{p}^N_t(-k+ j4N-\ell)\;.
\end{equ}
The next lemma shows that $p^N_t(k,\ell)$ can be replaced by $\bar{p}^N_t(\ell-k)$ up to some negligible term, whenever $\ell$ is in the $\epsilon$-bulk at time $t$. 

\begin{lemma}\label{Lemma:DecaySeriesKernel}
Fix $\epsilon > 0$ and a compact set $K\subset [0,T)$. There exists $\delta > 0$ such that uniformly over all $s\leq t \in K$, all $k\in\{1,\ldots,2N-1\}$, all $\ell\in B_\epsilon^N(t)$ and all $N\geq 1$, we have
\begin{equ}
\big| p^N_{t-s}(k,\ell) - \bar{p}^N_{t-s}(k,\ell) \big| b^N(s,k) \lesssim e^{-\delta N^{2\alpha}}\;.
\end{equ}
\end{lemma}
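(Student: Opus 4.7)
The plan is to exploit the image-method representation stated just before the lemma. Using the symmetry $\bar p^N_t(m)=\bar p^N_t(-m)$, its $j=0$ term contributes $\bar p^N_{t-s}(\ell-k)-\bar p^N_{t-s}(k+\ell)$, so
\[
p^N_{t-s}(k,\ell) - \bar p^N_{t-s}(\ell-k) = -\bar p^N_{t-s}(k+\ell) + \sum_{j\ne 0}\!\big(\bar p^N_{t-s}(k+j4N-\ell) - \bar p^N_{t-s}(-k+j4N-\ell)\big),
\]
and it suffices to bound $\bar p^N_{t-s}(m)\,b^N(s,k)$ for each displacement $m$ appearing on the right-hand side, and to check that the $|j|\geq 1$ sum converges. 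An elementary case analysis (using $\ell\in B^N_\epsilon(t)\subset[\epsilon N, 2N-\epsilon N]$ and $k\in\{1,\ldots,2N-1\}$) shows that every such $m$ satisfies $|m|\geq 1+\epsilon N$; the smallest values are realised by $m=-(k+\ell)$ and $m=-k+4N-\ell$, while all the others satisfy $|m|\geq 2N$.

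The key estimate is of Chernoff type. Since $m\mapsto e^{\gamma_N m}$ is an eigenfunction of the discrete Laplacian on $\Z$ with eigenvalue $2(\cosh\gamma_N-1)=\lambda_N/c_N$, we have $\sum_{m\in\Z}\bar p^N_t(m)\,e^{\gamma_N m}=e^{\lambda_N t}$, whence $\bar p^N_t(m)\leq e^{\lambda_N t - \gamma_N|m|}$ for all $m\in\Z$. Combining this with $b^N(s,k)\leq 2+e^{\lambda_N s-\gamma_N(k\wedge(2N-k))}$ and distinguishing $k\leq N$ (where $k+k\wedge(2N-k)=2k$) from $k>N$ (where $k+k\wedge(2N-k)=2N$), a direct calculation gives, for the critical image $m=-(k+\ell)$,
\[
\bar p^N_{t-s}(k+\ell)\,b^N(s,k) \lesssim e^{\lambda_N t - \gamma_N \ell}\;.
\]
Now $\ell\in B^N_\epsilon(t)$ translates into $\gamma_N\ell\geq\lambda_N t+\gamma_N\epsilon N$, so the right-hand side is at most $e^{-\gamma_N\epsilon N}$, which is of order $e^{-2\epsilon N^{1-\alpha}}$.

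The KPZ restriction $\alpha\leq 1/3$ is precisely what makes $N^{1-\alpha}\geq N^{2\alpha}$, converting the bound above into the desired $e^{-\delta N^{2\alpha}}$ for some $\delta>0$ proportional to $\epsilon$. The remaining images are treated analogously: the image $m=-k+4N-\ell$ falls under the same case analysis, and for $|j|\geq 1$ each shift by $4N$ produces an extra factor $e^{-4N\gamma_N}\lesssim e^{-8N^{1-\alpha}}$, so the series is geometric with very small ratio and its total is absorbed by the leading term. I expect the main subtlety to be the worst case $m=-(k+\ell)$ with $k$ close to the boundary: since $|m|$ is then only of size $\sim \epsilon N$, one cannot afford any slack in the product, and the split $k\leq N$ versus $k>N$ is exactly what ensures that the maximum of $b^N(s,k)$ (attained near $k=N$) is paired with a large enough value of $|m|+k\wedge(2N-k)$ to absorb the factor $e^{\lambda_N t}$.
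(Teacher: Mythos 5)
Your proof is correct and takes a genuinely simpler route than the paper. The paper's proof routes through Lemma \ref{Lemma:ExpoDecay}, which rests on the optimised large-deviation estimate \eqref{Eq:LargeDevKernel} involving the Cram\'er rate function $g(x)=\cosh(\argsh x)-x\,\argsh x-1$; this forces a case split on $4\alpha\lessgtr 1$ (linear regime via concavity of $g$ versus quadratic regime via $g(x)\le -x^2/2+cx^4$), plus a separate treatment of the tail $|j|\ge j_0$ in the image sum. You instead use the \emph{fixed-tilt} Chernoff bound $\bar p^N_t(m)\le e^{\lambda_N t-\gamma_N|m|}$, which follows directly from the eigenvalue identity $2c_N(\cosh\gamma_N-1)=\lambda_N$ and collapses neatly against the $e^{\lambda_N s-\gamma_N(k\wedge(2N-k))}$ factor hidden in $b^N(s,k)$. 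This is sub-optimal as a kernel estimate in the diffusive regime $4\alpha>1$, but it makes the bookkeeping for all images and all $j$ uniform, with the geometric factor $e^{-4N\gamma_N}$ per shift of $4N$ handling the series at once. The trade-off you identified is exactly right: the Chernoff bound yields decay $e^{-cN^{1-\alpha}}$, and $N^{1-\alpha}\ge N^{2\alpha}$ precisely when $\alpha\le 1/3$, which is the only regime in which this lemma is invoked. Note that the paper's own argument (Lemma \ref{Lemma:ExpoDecay}, case $4\alpha>1$) has the same hidden dependence on $\alpha\le 1/3$: the term $-(\ell-k)^2/(4c_N(t-s))\sim -N^{2-4\alpha}$ only overwhelms $\lambda_N s\sim N^{2\alpha}$ when $2-4\alpha\ge 2\alpha$, i.e.\ $\alpha\le 1/3$. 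So neither argument is more restrictive; yours just makes the constraint transparent. One small wrinkle to mention for completeness: the image list should also cover the case $b^N(s,k)\le 3$, where the Chernoff bound alone gives $\lambda_N(t-s)-\gamma_N(k+\ell)\le \lambda_N t-\gamma_N\ell\le -\gamma_N\epsilon N$ by the same $B^N_\epsilon(t)$ inequality; you implicitly cover this, but it is worth flagging since the paper splits on it explicitly.
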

\begin{proof}
We only consider the case where $b^N(s,k) > 3$ since the other case is simpler. Observe that there exists $C>0$ such that $\log b^N(s,k) \leq C N^{2\alpha}$ for all $s\in K$ and all $k\in\{1,\ldots,2N-1\}$. Arguing differently according to the relative values of $4\alpha$ and $1$, and using the bound (\ref{Eq:LargeDevKernel}), we deduce that there exists $j_0\geq 1$ such that
\begin{equ}\label{Eq:PeriodicKernel}
\sum_{j\in\Z: |j| \geq j_0} \bar{p}^N_{t-s}(k+ j4N-\ell)b^N(s,k) + \bar{p}^N_{t-s}(-k+ j4N-\ell)b^N(s,k) \lesssim e^{-\delta N^{2\alpha}}\;,
\end{equ}
uniformly over all $s\leq t \in K$, all $k\in\{1,\ldots,2N-1\}$ and all $\ell\in B_\epsilon(t)$. On the other hand, the arguments in the proof of Lemma \ref{Lemma:ExpoDecay} yield that
\begin{equs}
\sum_{j\in\Z: |j| < j_0} \bar{p}^N_{t-s}(-k+ j4N-\ell)b^N(s,k) &\lesssim e^{-\delta N^{2\alpha}}\;,\\
\sum_{j\in\Z: 0 < |j| < j_0,} \bar{p}^N_{t-s}(k+ j4N-\ell)b^N(s,k) &\lesssim e^{-\delta N^{2\alpha}}\;,
\end{equs}
uniformly over the same set of parameters, thus concluding the proof.
\end{proof}

\begin{lemma}\label{Lemma:BoundK}
Fix $\epsilon > 0$. There exist $\beta \in (0,1)$ such that
\begin{equ}
\int_s^t \sum_{k\in B^N_{\epsilon/2}(r)} |K_{t-r}^N(k,\ell)|(2N)^{4\alpha} dr < \beta\;,
\end{equ}
uniformly over all $s \leq t \in K$, all $\ell\in B^N_\epsilon(t)$ and all $N$ large enough.
\end{lemma}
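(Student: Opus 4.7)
The plan is to reduce, up to exponentially small corrections, to an integral against the heat kernel $\bar p^N$ on $\Z$, and then obtain a uniform bound strictly below $1$ by combining Cauchy--Schwarz with Parseval after rescaling out $c_N$.

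First, for $\ell \in B^N_\epsilon(t) \subset B^N_{\epsilon/2}(r)$ and $k \in B^N_{\epsilon/2}(r)$ (so $b^N(r,k)$ is of order $1$), Lemma~\ref{Lemma:DecaySeriesKernel} applied at $\ell$ and $\ell\pm 1$ yields $|p^N_{t-r}(k,\ell') - \bar p^N_{t-r}(\ell'-k)| \lesssim e^{-\delta N^{2\alpha}}$ for $\ell' \in \{\ell-1,\ell,\ell+1\}$, hence the same bound for the $\ell$-gradients. Since $|\nabla^\pm \bar p^N|$ is bounded (Lemma~\ref{Lemma:BoundHeatKernelZ}), replacing $K^N$ by $\bar K^N_\tau(k,\ell) := \nabla^+\bar p^N_\tau(\ell-k)\nabla^-\bar p^N_\tau(\ell-k)$ in the integrand contributes a total error of order $N^{1+4\alpha}e^{-\delta N^{2\alpha}}$, which is negligible. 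Extending the sum from $k \in B^N_{\epsilon/2}(r)$ to $k \in \Z$ only adds Gaussian-tail small terms, since for $k \notin B^N_{\epsilon/2}(r)$ we have $|\ell-k| \geq \epsilon N/2 \gg \sqrt{c_N(t-r)} \sim N^{2\alpha}$.

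By translation invariance, $\sum_{k\in\Z} |\bar K^N_\tau(k,\ell)| = \sum_{j\in\Z} |\nabla^+ \bar p^N_\tau(j) \nabla^- \bar p^N_\tau(j)|$. Since $\nabla^- f(j) = \nabla^+ f(j-1)$ and $\ell^2$-norms are shift-invariant, one has $\sum_j (\nabla^- \bar p^N_\tau(j))^2 = \sum_j (\nabla^+ \bar p^N_\tau(j))^2$, and Cauchy--Schwarz gives
\[
\sum_j |\nabla^+ \bar p^N_\tau(j) \nabla^- \bar p^N_\tau(j)| \leq \sum_j (\nabla^+ \bar p^N_\tau(j))^2.
\]
Parseval then yields $\sum_j (\nabla^+ \bar p^N_\tau(j))^2 = 2(\bar p^N_{2\tau}(0) - \bar p^N_{2\tau}(1)) = -\frac{1}{2c_N}\partial_\tau \bar p^N_{2\tau}(0)$. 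Multiplying by $(2N)^{4\alpha} \sim 2c_N$ and integrating over $\tau \in [0,t-s]$, the integral in question is bounded above by $1 - \bar p^N_{2(t-s)}(0) + o(1)$, which can approach $1$ as $N\to\infty$.

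To extract strict inequality uniformly, we rescale via $u = c_N\tau$: since $\bar p^N_\tau(j)$ depends on $\tau$ only through $c_N\tau$, we may write $\bar p^N_\tau(j) = \bar q_u(j)$, where $\bar q_u$ is the rate-$1$ simple random walk kernel on $\Z$, \emph{independent of $N$}. The bound becomes $2\int_0^{c_N(t-s)} F(u)\,du + o(1)$ with $F(u) := \sum_{j\in\Z} |\nabla^+\bar q_u(j)\, \nabla^-\bar q_u(j)|$. For every $u>0$, the Cauchy--Schwarz above is strict: equality would force $|\nabla^+ \bar q_u(j)| = \lambda\, |\nabla^+ \bar q_u(j-1)|$ for all $j$ and some $\lambda>0$, which is impossible for a nonzero $\ell^2$ sequence on $\Z$. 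Combined with continuity and positivity of both integrands on $(0,\infty)$, this pointwise strict inequality integrates to $\int_0^\infty F(u)\,du < \int_0^\infty \sum_j (\nabla^+\bar q_u(j))^2\,du = \tfrac{1}{2}$. Setting $\beta := 2\int_0^\infty F(u)\,du < 1$ yields the claim uniformly in $N$ and $s \leq t \in K$. The main obstacle is precisely this last step: the $N$-dependence of the naive bound $1-\bar p^N_{2(t-s)}(0)$ must be eliminated by passing to the $N$-free kernel $\bar q_u$, so that the pointwise strict Cauchy--Schwarz translates into a strict inequality of $N$-independent integrals.
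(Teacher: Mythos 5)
Your proof is correct, and it differs from the paper's at exactly one step. The paper's proof is very short: it uses Lemma~\ref{Lemma:DecaySeriesKernel} to replace $K^N$ by the whole-line kernel $\bar K^N$ up to an $\cO(N^{1+4\alpha}e^{-\delta N^{2\alpha}})$ error (as you also do), and then simply cites Lemma~A.3 of \cite{BG97} for the key bound $\int_s^t \sum_k |\bar K^N_{t-r}(k,\ell)|(2N)^{4\alpha}\,dr \le \beta' < 1$. You instead prove this bound from scratch: extend the sum to $k\in\Z$ (negligible by Gaussian tails since $|\ell-k|\gtrsim \epsilon N \gg N^{2\alpha}$ off the bulk), use shift-invariance plus Cauchy--Schwarz to bound $\sum_j|\nabla^+\bar p^N_\tau(j)\nabla^-\bar p^N_\tau(j)|$ by $\sum_j(\nabla^+\bar p^N_\tau(j))^2$, evaluate the latter via the semigroup identity $\sum_j(\nabla^+\bar p^N_\tau(j))^2 = 2\bigl(\bar p^N_{2\tau}(0)-\bar p^N_{2\tau}(1)\bigr) = -\frac{1}{2c_N}\partial_\tau\bar p^N_{2\tau}(0)$, and integrate to get $1-\bar p^N_{2(t-s)}(0)$. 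Your crucial observation is that this naive bound is \emph{not} uniformly below $1$ (since $\bar p^N_{2(t-s)}(0)\to 0$), so the margin must come from the strictness of Cauchy--Schwarz; and that after rescaling $u=c_N\tau$ one lands on the $N$-free rate-one kernel $\bar q_u$, for which equality in Cauchy--Schwarz would force a geometric sequence in $\ell^2(\Z)$, impossible, whence $2\int_0^\infty F(u)\,du < 2\int_0^\infty\sum_j(\nabla^+\bar q_u(j))^2\,du = 1$. This is presumably the mechanism behind Bertini--Giacomin's Lemma~A.3 as well, so the two routes are not independent in substance, but yours is self-contained and makes explicit where the strict inequality and the uniformity in $N$ come from, whereas the paper's citation hides exactly that subtlety.
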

\begin{proof}
Lemma \ref{Lemma:DecaySeriesKernel} ensures that
\begin{equ}
\int_s^t \!\!\sum_{k\in B_{\epsilon/2}(r)}\!\! |K^N_{t-r}(k,\ell)| (2N)^{4\alpha} dr = \int_s^t \!\!\sum_{k\in B_{\epsilon/2}(r)}\!\! |\bar{K}^N_{t-r}(k,\ell)| (2N)^{4\alpha} dr+ \cO(N^{1+4\alpha}e^{-\delta N^{2\alpha}})\;,
\end{equ}
uniformly over all $\ell \in B_\epsilon(t)$, all $t\in K$ and all $N\geq 1$. Lemma A.3 in~\cite{BG97} ensures that the first term on the r.h.s.~is smaller than some $\beta' \in (0,1)$. Since the second term vanishes as $N\rightarrow\infty$, the bound of the statement follows.
\end{proof}

\bibliographystyle{Martin}
\bibliography{library}

\end{document}